\documentclass[10pt,reqno]{amsart}

\usepackage{amsmath,amssymb,esint,mathrsfs}
\usepackage{xcolor}
\usepackage[colorlinks=true]{hyperref}

\allowdisplaybreaks
\numberwithin{equation}{section}

\newtheorem{lemma}{Lemma}[section]
\newtheorem{theorem}[lemma]{Theorem}
\newtheorem{proposition}[lemma]{Proposition}
\newtheorem{corollary}[lemma]{Corollary}

\newtheorem{remark}[lemma]{Remark}

\def\R{{\mathbb R}}
\def\Rn{{\mathbb R}^n}
\def\N{{\mathbb N}}

\def\eps{{\varepsilon}}
\def\ii{{\tt i}}
\def\kk{{\tt k}}
\def\Ieps{{\mathcal I}_j}
\def\xiij{x_j^{\ii}}
\def\rj{\eps_j}
\def\Bijh{B_j^{\ii,h}}
\def\Bijhj{B_j^{\ii,h_j}}
\def\Cijh{C_j^{\ii,h}}
\def\Cij{C_j^{\ii,h_j}}
\def\Vij{V_j^{\ii}}
\def\Ln{\mathcal{L}^n}
\newcommand{\defectj}{\mathcal D_j}
\newcommand{\Dhom}{\dot{W}}
\newcommand{\capsp}{\operatorname{cap}_{s,p}}
\newcommand{\capp}{\operatorname{cap}_{p}}
\newcommand{\Capsp}{C_{s,p}}
\newcommand{\Capp}{C_p}

\title[BBM localization and obstacle homogenization]
{The Bourgain, Brezis, Mironescu localization\\ for
deterministic obstacle problems}

\author[Matteo Focardi]{Matteo Focardi,\\
DiMaI ``Ulisse Dini'', V.le Morgagni 67/A\\
50134 - Firenze - Italy}
\date{\today}

\begin{document}

\maketitle

\begin{abstract}
We study the asymptotic behaviour of families of fractional energies normalized according to the Bourgain--Brezis--Mironescu formula and defined on functions satisfying a pinning condition on possibly aperiodic families of small obstacles. The result of the simultaneous limit depends 
crucially on the critical rescaling chosen for the obstacles.
\end{abstract}

\vspace{0.3cm}
\begin{flushright}
    {\itshape This work is dedicated to Paolo Marcellini, a pioneer in homogenization, on the occasion of his 80th birthday}
\end{flushright}
\vspace{0.3cm}

\section{Introduction}\label{s:introduction}

In the seminal paper \cite{BBM}, Bourgain, Brezis, and Mironescu established a connection between classical Sobolev spaces and their fractional counterparts as the order of fractional differentiability tends to one (cf. also \cite{Brezis}).  
In particular, the appropriate normalization of the fractional seminorm restores continuity in the differentiation index.
More precisely, let $U\subset\Rn$ be a bounded, connected Lipschitz open set, $n\geq2$, and $p\in(1,\infty)$.  For $u\in L^p(U)$ and $s\in(0,1)$, set
\[
 [u]_{s,p;U}^p:=\int_{U\times U}
       \frac{|u(x)-u(y)|^p}{|x-y|^{n+sp}}\,dx\,dy.
\]
Then \cite[Corollary~2]{BBM} gives
\[
 \lim_{s\to1^-}(1-s)[u]_{s,p;U}^p
 =\begin{cases}
 K_{n,p}\displaystyle\int_U|\nabla u|^p\,dx,
       &u\in W^{1,p}(U),\\
 +\infty,&u\in L^p(U)\setminus W^{1,p}(U).
 \end{cases}
\]
The constant $K_{n,p}$ is given explicitly in \eqref{e:Knp}.

Subsequent developments have extended this result in several directions: to the whole space $\Rn$, to variational convergence in the sense of $\Gamma$-convergence, to problems with boundary conditions, and to magnetic Sobolev spaces.  Necessary and sufficient conditions on more general families of kernels have also been investigated (see, among others, \cite{Brezis,PonceGamma,NguyenPinamontiSquassinaVecchi,
CrismaleDeLucaKubinNinnoPonsiglione,KubinSaraccoStefani,DavoliDiFrattaPagliari,
GennaioliStefani}).

Recent contributions have analyzed the interaction of the Bourgain, Brezis, and Mironescu limit (BBM limit in short) with another singular limit involving microscopic scales: homogenization.
For quadratic fractional energies with periodic coefficients,
Braides, Brusca, and Donati \cite{BraidesBruscaDonati} established a regime in which localization and homogenization can be separated (see also \cite{FocardiSurvey} for a preliminary result).  Braides, Mannella, and Schiavoni Piazza \cite{BraidesMannellaSchiavoniPiazza} identified the transition scale between homogenized and averaged energies, governed by the relative rates of the fractional order approaching one and the period tending to zero.

In this paper we pursue a related but different path by studying the asymptotics 
via $\Gamma$-convergence of the rescaled Gagliardo seminorms
under the additional constraint that the admissible functions vanish on a family of
small obstacles. 
  
The limiting behavior of solutions to local Dirichlet problems in periodically perforated domains has been an active area of research since the 1970s, following the seminal papers of Marchenko and Khruslov \cite{marchenko-khruslov}, Rauch and Taylor \cite{rauch-taylor-1,rauch-taylor-2},  Cioranescu and Murat \cite{cioranescu-murat-2,cioranescu-murat-1}, and De Giorgi, Dal Maso and Longo \cite{DeGiorgiDalMasoLongo} (cf. \cite{DalMaso-bilateral,DalMaso-unilateral,DalMaso-integralrepr} for a complete solution of the problem). 
These problems are motivated by applications in electrostatic screening, fluid mechanics, elasticity and acoustics, among others.
It is well known that when the perforations have a critical size the limit equation or functional contains, in addition to the original operator or energy, a zero-order term reflecting the cumulative capacitary cost of the obstacles. 
We do not attempt to survey the vast bibliography on the subject and refer to the comprehensive accounts in 
\cite{marchenko-khruslov,DalMaso-book, Attouch,BraidesDeFranceschi,BraidesBeg}.
Here, we only stress that the capacity of the individual obstacles, together with their distribution, determines their contribution to the limit problem.

The interest in the homogenization of obstacle problems resurfaced more recently thanks to the work of Caffarelli and Mellet \cite{caffarelli-mellet} in the nonlocal setting, with interest driven by applications in contact mechanics, flow through semipermeable membranes, and mathematical finance among others.
Caffarelli and Mellet studied these problems for the fractional Laplacian via Tartar's oscillating test functions method and taking advantage of the celebrated Caffarelli--Silvestre extension argument \cite{caffarelli-silvestre} to reduce the nonlocal homogenization problem to one involving a local but degenerate energy.
An intrinsic variational approach working directly with the nonlocal energy was later proposed by the author, using $\Gamma$-convergence techniques for an aperiodic distribution of obstacles \cite{FocardiAperiodic} (see also \cite{FocardiCPDE} for an earlier result using $\Gamma$-convergence and the above mentioned extension method, and \cite{Palatucci1,Palatucci2} for soft Robin-type obstacle constraints and a different intrinsic approach if $p=2$).

Stochastic distributions of obstacles satisfying suitable stationarity and integrability conditions, allowing dependence between centres and marks as well as clustering of the obstacles have been recently  dealt with in the work of Giunti, H\"ofer and Vel\'azquez \cite{GiuntiHoferVelazquez} in the linear local case using PDEs methods.
The nonlinear counterpart using the variational approach has been studied in Scardia, Zemas and Zeppieri \cite{ScardiaZemasZeppieri} still in the local setting. Deangelis, Zeppieri and the author \cite{DeangelisFocardiZeppieri} have established more recently an extension to fractional energies
under natural ergodicity assumptions.

\subsection{Overview of the main results}

The focus of this paper is to consider simultaneously the BBM limit and the homogenization 
of an obstacle condition, with deterministic though possibly aperiodic distributions.  
For these problems, as customary, we assume $p\in(1,n)$.
We stress that both asymptotic processes involve a localization mechanism, so their interaction requires separating two contributions which concentrate near the diagonal.

For $p=2$, the associated minimum problems describe diffusion in a medium containing small absorbing inclusions.  The simultaneous limit asks whether the collective absorption due to these inclusions persists as the diffusion energy becomes local, and how its strength depends on their size.

The outcome of our analysis is that the interaction between the two phenomena depends subtly on the chosen scaling of the obstacles. In particular, if
$s_j\to1^-$ denotes the fractional order of differentiability and $\eps_j\to0^+$ the characteristic distance between the obstacles, at the critical scale
$\eps_j^{\frac n{n-s_jp}}$, dictated by the homogeneity of the fractional capacity, the bulk and capacitary terms are additive and their coefficients are independent of the relative rates at which $1-s_j$ and $\eps_j$ vanish (Theorem~\ref{t:aperiodic-BBM}).  
Instead, at the critical scale $\eps_j^{\frac n{n-p}}$, dictated by the capacity of the limiting local energy, three different regimes arise (Theorem~\ref{t:aperiodic-local-critical-scale}).
For further details on the interaction between the two concentration mechanisms we refer to the discussion in Subsection~\ref{ss:heuristics}.

For the sake of simplicity, in this introduction we describe the results we obtain for a periodic distribution of obstacles, referring to Theorems~\ref{t:aperiodic-BBM} and~\ref{t:aperiodic-local-critical-scale} for the general, possibly aperiodic, statements.  Extensions to thin obstacles and to translation-invariant homogeneous kernels are discussed in Remarks~\ref{r:capacity-constraint} and~\ref{r:homogeneous-kernels}, respectively. Furthermore, the ideas behind the proof of the results described below will be discussed heuristically in Subsection~\ref{ss:heuristics}.

Let then $T\Subset B_1$ be a nonempty open set, and let
$s_j\uparrow1$ and $\eps_j\downarrow0$.  Set
\begin{equation}\label{e:obstacle-set-intro}
 \lambda_j:=\eps_j^{\frac{n}{n-s_jp}},\qquad
 \mathcal K_j:=\bigcup_{\ii\in\mathbb Z^n}(\eps_j\ii+\lambda_jT),
\end{equation}
and define $\mathcal F_j:L^p(U)\to[0,+\infty]$ by
\begin{equation}\label{e:aperiodic-functionals-intro}
 \mathcal F_j(u):=
 \begin{cases}
 (1-s_j)[u]_{s_j,p;U}^p,
 &u\in W^{s_j,p}(U),\quad u=0\text{ a.e. on }\mathcal K_j\cap U,\\
 +\infty,&\text{otherwise}.
 \end{cases}
\end{equation}
Under the above assumptions $(\mathcal F_j)_j$ $\Gamma$-converges in $L^p(U)$ to
\begin{equation}\label{e:aperiodic-limit-functional-intro}
 \mathcal F(u):=\begin{cases}
 K_{n,p}\displaystyle\int_U|\nabla u|^p\,dx
 +K_{n,p}\capp(T)\displaystyle\int_U|u|^p\,dx,
 &u\in W^{1,p}(U),\\
 +\infty,&\text{otherwise}.
 \end{cases}
\end{equation}
Here, $\capp(T)$ denotes the homogeneous $p$-capacity defined in
\eqref{e:local-capacity}.  

Theorem~\ref{t:aperiodic-local-critical-scale} deals instead with the
rescaling associated with the local capacity.  In the periodic case, set
\begin{equation}\label{e:local-critical-obstacle-scale-intro}
 \bar\lambda_j:=\eps_j^{\frac{n}{n-p}},
 \qquad
 \bar{\mathcal K}_j:=
 \bigcup_{\ii\in\mathbb Z^n}(\eps_j\ii+\bar\lambda_jT),
\end{equation}
and define $\bar{\mathcal F}_j:L^p(U)\to[0,+\infty]$ by
\begin{equation}\label{e:local-scale-functionals-intro}
 \bar{\mathcal F}_j(u):=
 \begin{cases}
  (1-s_j)[u]_{s_j,p;U}^p,
  &u\in W^{s_j,p}(U),\quad
   u=0\quad\text{a.e. on }
   \bar{\mathcal K}_j\cap U,\\
  +\infty,&\text{otherwise},
 \end{cases}
\end{equation}  
If
\begin{equation}\label{e:rate-regimes-intro}
 (1-s_j)|\log \eps_j|\longrightarrow\ell
 \qquad\text{for some }\ell\in[0,+\infty],
\end{equation}
then $(\bar{\mathcal F}_j)_j$ $\Gamma(L^p(U))$-converges to
\begin{equation}\label{e:local-scale-limit-functional-intro}
\bar{\mathcal F}^{(\ell)}(u):=
 \begin{cases}
 K_{n,p}\displaystyle\int_U|\nabla u|^p\,dx
 +\kappa_{\ell}K_{n,p}\capp(T)
  \displaystyle\int_U|u|^p\,dx,
 &u\in W^{1,p}(U),\\
 +\infty,&\text{otherwise},
 \end{cases}
\end{equation}
where
\begin{equation}\label{e:regime-coefficient-intro}
 \kappa_{\ell}:=
 \begin{cases}
  \displaystyle\exp\left(-\frac{np}{n-p}\ell\right),
       &\ell<+\infty,\\
  0,&\ell=+\infty.
 \end{cases}
\end{equation}
Thus the capacitary term is unchanged if $\ell=0$, is modulated by
$\exp(-np\ell/(n-p))$ if $\ell\in(0,+\infty)$, and disappears if $\ell=+\infty$.  

Note that the logarithmic parameter in \eqref{e:rate-regimes-intro} also governs the 
interaction between the BBM limit and periodic homogenization as established in \cite{BraidesMannellaSchiavoniPiazza}. 
In that quadratic setting, the limit is a convex combination of the homogenized energy and the energy with averaged coefficients, with weights $e^{-2\ell}$ and $1-e^{-2\ell}$, respectively. 
In formula \eqref{e:local-scale-limit-functional-intro} instead $\kappa_\ell$ affects only the capacitary coefficient, while the bulk term remains unchanged.

In the companion paper \cite{FocardiZeppieri} we will study the analogous problem in the stochastic setting following the approach in \cite{DeangelisFocardiZeppieri}. 
On the one hand results similar to Theorems~\ref{t:aperiodic-BBM} 
and~\ref{t:aperiodic-local-critical-scale} can be obtained, 
on the other hand several new additional phenomena 
intimately related to the stochastic setting arise.

\subsection{Key results for the proofs}
The proofs are inspired by the $\Gamma$-convergence analysis at fixed $s$ in \cite{FocardiAperiodic}, though several new ingredients are needed to deal with the simultaneous BBM localization and the homogenization of the obstacle sets.

To begin with, as the parameter of fractional differentiation $s$ tends to $1$ we need to revisit several well-known results and check their validity with uniform constants with respect to $s$  
itself (cf. Section~\ref{s:uniform-tools}). 

In addition, upon rescaling the fractional capacities by $1-s$, we establish the convergence of their values on bounded open sets to the corresponding value of the local capacity 
(cf. Corollary~\ref{c:capacity-convergence}). 
To this aim we revisit the BBM argument, and provide an extension of its $\Gamma$-convergence version with respect to the $L^p_{\mathrm{loc}}(\R^n)$ topology (Proposition~\ref{p:homogeneous-Gamma}). In Remark~\ref{r:capacity-thin-obstacles} the validity of such a convergence statement for general bounded sets is discussed at length.

The second key technical tool is an extension of the nonlocal joining lemma
of \cite{FocardiAperiodic} with estimates uniform as $s_j\uparrow1$ together with 
a further filling procedure (cf. Lemma~\ref{l:uniform-joining}).  
We recall that \cite[Lemmas~3.8--3.9]{FocardiAperiodic} build upon the ideas 
introduced by Ansini and Braides \cite{AnsiniBraides} for local 
energies on varying domains. The uniform joining lemma with filling in Lemma~\ref{l:uniform-joining} below shows 
that given a sequence $u_j\to u$ in $L^p(U)$ satisfying the obstacle
constraints and with bounded normalized fractional energy, the
joining construction modifies $u_j$ on annuli surrounding separated
obstacles.  The resulting function $w_j$ agrees with $u_j$ outside
these annuli and is constant on a smaller concentric annulus, with
value equal to the mean of $u_j$ over that annulus.  The modification
preserves the constraint on the enclosed obstacles and changes
the energy by an arbitrarily small relative error, uniformly
in $j$.  These constant annular values allow one to compare the
internal energies with the normalized relative fractional capacities of the
enclosed obstacles, multiplied by the $p$-th powers of the absolute
values of the corresponding means.  Summing these estimates and
passing to the limit yields the capacitary contribution.

We complement this construction with a filling procedure: inside
each ball enclosed by a constant annulus, we replace $w_j$ by the
same constant.  The filled sequence $v_j$ still converges to $u$
in $L^p(U)$.  The filling estimates separate the energy of $w_j$
into the full energy of $v_j$ and the internal energies accounting
for the obstacles, with an error tending to zero as $s_j\uparrow1$.
The full energy of $v_j$ then recovers the bulk Dirichlet term
through the BBM lower bound.  This separation is the additional
ingredient needed to obtain both contributions in the simultaneous
limit; its relation to the concentration mechanisms is explained next.

\subsection{Heuristic ideas of the asymptotics}\label{ss:heuristics}
Having described the main technical tools employed in what follows, we discuss next the mechanisms underlying the asymptotic analysis on an intuitive level.
As already mentioned above, both terms in \eqref{e:aperiodic-limit-functional-intro} arise
from concentration of the nonlocal energies near the diagonal.
For the gradient term this is the BBM limit,
whereas the capacitary term is obtained by summing the energies needed to impose the 
constraint near each obstacle.  Its coefficient is identified by the BBM limit of the fractional 
capacities after rescaling around the obstacles.
One might therefore expect different limits according to the relative speed at which
$\eps_j\downarrow0$ and $s_j\uparrow1$.  No such additional regime
occurs at the scale \eqref{e:obstacle-set-intro} (cf. Theorem~\ref{t:aperiodic-BBM}).
Instead, at the scale \eqref{e:local-critical-obstacle-scale-intro} such a trichotomy takes place
(cf. Theorem~\ref{t:aperiodic-local-critical-scale}) affecting the coefficient of the capacitray term.

Let us first explain why the coefficient of the gradient term is unchanged at both scales  \eqref{e:obstacle-set-intro} and \eqref{e:local-critical-obstacle-scale-intro}.
To this aim, we distinguish concentration around the obstacles from concentration
at small interaction distances.  Indeed, both the fractional and local scales $\lambda_j$,
$\bar{\lambda}_j$ are $o(\eps_j)$ as $j\to\infty$.
One can therefore surround the obstacles by regions much smaller
than the spacing of their centres but much larger than the
obstacles themselves. 
These regions occupy an overall vanishing volume, and for a fixed Lipschitz function 
carry a vanishing part of the normalized energy. 
The capacitary correctors can thus contribute a nonvanishing energy on regions carrying a vanishing part of the energy of the fixed target function. Removing these regions from the latter does not change its BBM limit.
The joining and filling estimates \eqref{e:filling-lower}--\eqref{e:filling-upper} in Lemma~\ref{l:uniform-joining} below give the corresponding separation 
for general sequences of bounded energy. 

The concentration of BBM energies near the diagonal involves a different scale.
If the lower cutoff for the interaction distance
is $\varepsilon_j$, then  the normalized radial
integral for an affine function satisfies, for every fixed
$\delta>0$ and all sufficiently large $j$,
\begin{equation}\label{e:BBM-mass-above-rj}
 (1-s_j)\int_{\eps_j}^{\delta}t^{p(1-s_j)-1}\,dt
 =\frac1p(\delta^{p(1-s_j)}-\eps_j^{p(1-s_j)}).
\end{equation}
If $\eps_j^{p(1-s_j)}\to a$, taking first $j\to\infty$ and then
$\delta\downarrow0$ gives the limit value $(1-a)/p$. 
Note that the energy contribution at distances greater than a fixed $\delta$ vanishes.
On the other hand, the BBM mass need not be concentrated at distances below $\eps_j$.
The parameter
\begin{equation}\label{e:relative-rate-parameter}
 \ell_j:=(1-s_j)|\log \eps_j|
\end{equation}
controls this radial distribution through
$\eps_j^{p(1-s_j)}=e^{-p\ell_j}$. It does not alter the spatial separation or the full bulk coefficient: the spatial regions
occupied by the capacitary corrections can be separated from
the energy of the limiting function without cutting the
interaction kernel at $\varepsilon_j$ as discussed above.

The capacitary coefficient requires a separate analysis.
It is independent of the relative rates
if the scaling of the obstacles is done according to the fractional 
critical choice in \eqref{e:obstacle-set-intro}, as shown by the identity
\begin{equation}\label{e:exact-critical-balance}
\lambda_j^{n-s_jp}=\eps_j^n
\end{equation}
valid for every $j$.  Indeed, by homogeneity of the fractional capacity,
the normalized cost of one obstacle is
\begin{equation*}
 (1-s_j)\operatorname{cap}_{s_j,p}(\lambda_jT)
 =(1-s_j)\eps_j^n\operatorname{cap}_{s_j,p}(T),
\end{equation*}
and Corollary~\ref{c:capacity-convergence} gives
\begin{equation*}
 (1-s_j)\operatorname{cap}_{s_j,p}(T)
 \longrightarrow K_{n,p}\capp(T).
\end{equation*}
Thus, as the number of obstacles per unit volume is of order $\eps_j^{-n}$, 
the capacitary coefficient contains no residual dependence on $\ell_j$.  
The exact balance \eqref{e:exact-critical-balance}
removes the rate dependence from the capacitary coefficient in
Theorem~\ref{t:aperiodic-BBM}.
The uniform joining and filling estimates separate this contribution from the 
bulk BBM energy without imposing any relation between $\eps_j$ and $s_j$.

Three regimes arise instead if one uses the
$s$-independent local critical scale
$\bar\lambda_j:=\eps_j^{n/(n-p)}$
instead of $\lambda_j$.  In that case the total normalized
capacitary cost contains the additional factor
\begin{equation}\label{e:local-scale-regime-factor}
 \eps_j^{-n}\bar\lambda_j^{n-s_jp}
 =\eps_j^{\frac{np(1-s_j)}{n-p}}
 =\exp\left(-\frac{np}{n-p}\ell_j\right).
\end{equation}
It converges to $1$, to $\exp(-np\ell/(n-p))$, or to $0$ according as
$\ell_j\to0$, $\ell_j\to \ell\in(0,+\infty)$, or $\ell_j\to+\infty$.
Thus spatial separation accounts for the retention of the full bulk
term, while \eqref{e:local-scale-regime-factor} gives the factor $\kappa_\ell$ in
\eqref{e:regime-coefficient-intro}.

\subsection{Outline of the paper}

Section~\ref{s:uniform-tools} contains the notation and the needed preliminaries 
on fractional Sobolev spaces, and the related uniform inequalities as $s\uparrow1$. 
Section~\ref{s:homogeneous-Gamma} establishes the convergence results for the normalized fractional capacities.  Section~\ref{s:uniform-joining-filling} is dedicated to the proof of the uniform joining and filling lemma. 
In Section~\ref{s:aperiodic-BBM}
 we state and prove the main results in the general aperiodic setting modelled 
 upon Delone sets of points. Appendix~\ref{s:singular-kernel-bounds} 
 collects some useful technical results used throughout the paper.

\section{Preliminaries and uniform inequalities as \texorpdfstring{$s\uparrow1$}{s tends to 1}}
\label{s:uniform-tools}

\subsection{Basic notation}
We denote by $\Ln(E)$ the Lebesgue measure of a measurable set $E$.
We use the notation $\chi_A$ for the characteristic function of a set $A$, and
\(
 (u)_A:=\fint_Au(x)\,dx
\)
for the mean value of an integrable function $u$ if $A$ is measurable
with $0<\Ln(A)<+\infty$.  We write $B_r(x):=\{y\in\Rn:|y-x|<r\}$ and denote its closure by
$\overline B_r(x)$.  We also write $B_r:=B_r(0)$,
$\omega_n:=\Ln(B_1)$. 
Equalities between functions are understood $\Ln$-almost everywhere 
(abbreviated as a.e.).

Standard Sobolev and fractional Sobolev spaces are denoted as usual.
In particular, throughout this paper $n\geq2$ and $p\in(1,n)$. 
For $s\in(0,1)$ with $sp<n$, set
\begin{equation}\label{e:critical-exponent}
 p_s^*:=\frac{np}{n-sp},
 \qquad
 [u]_{s,p}^p:=
 \int_{\Rn\times\Rn}
 \frac{|u(x)-u(y)|^p}{|x-y|^{n+sp}}\,dx\,dy.
\end{equation}
We denote by $\Dhom^{s,p}(\Rn)$ the completion of
$C_c^\infty(\Rn)$ with respect to $[\,\cdot\,]_{s,p}$.  By
\cite[Theorem~3.1]{BrascoGomezVazquez}, in the subconformal range
$sp<n$ this abstract completion is canonically identified with
\begin{equation}\label{e:homogeneous-characterization}
 \Dhom^{s,p}(\Rn)
 =\left\{u\in L^{p_s^*}(\Rn):[u]_{s,p}<+\infty\right\}.
\end{equation}
The same statement for $s=1$ gives
\begin{equation}\label{e:local-homogeneous-space}
 \Dhom^{1,p}(\Rn)
 =\left\{u\in L^{p^*}(\Rn):\nabla u\in L^p(\Rn;\Rn)\right\},
 \qquad p^*=\frac{np}{n-p},
\end{equation}
and $C_c^\infty(\Rn)$ is dense in this space for the norm
$\|\nabla u\|_{L^p(\Rn)}$.

\subsection{Uniform inequalities as \texorpdfstring{$s\uparrow1$}{s tends to 1}}
We first recall the form of the fractional Poincar\'e--Wirtinger
inequality which will be used below.  Let $A\subset\Rn$ be a bounded,
connected Lipschitz open set and let $O\subset A$ be measurable with
$\Ln(O)>0$.  For every $s_0\in(0,1)$ there exists
$c=c(n,p,s_0,A,O)>0$ such that
\begin{equation}\label{e:uniform-PW-prelim}
 \|u-(u)_O\|_{L^p(A)}^p
 \leq c(1-s)[u]_{s,p;A}^p
 \qquad
 \text{for every }s\in[s_0,1),\ u\in W^{s,p}(A).
\end{equation}
For cubes and the mean on the whole cube this is precisely
\cite[Corollary~2.1]{PoncePoincare}; the bounded Lipschitz-domain
version follows from \cite[Theorem~1.1]{PoncePoincare}, and the
passage from the mean on $A$ to the mean on $O$ follows from
H\"older's inequality.  By
scaling, if $z\in\Rn$ and $r>0$, then
\begin{equation}\label{e:uniform-PW-scaled}
 \|u-(u)_{z+rO}\|_{L^p(z+rA)}^p
 \leq c(1-s)r^{sp}[u]_{s,p;z+rA}^p.
\end{equation}
An elementary change of variables shows that the constant is uniform for families of pairs $(A,O)$ obtained from a
fixed pair by bi-Lipschitz transformations whose Lipschitz constants,
together with those of their inverses, are uniformly bounded.  This
is the uniform version of the estimate used in the joining lemma.

The second global estimate which will be crucial in what follows is the sharp-order fractional
Sobolev--Gagliardo--Nirenberg inequality of
Maz'ya and Shaposhnikova.  With the correction recorded in
\cite{MazyaShaposhnikovaErratum},
\cite[Theorem~1]{MazyaShaposhnikova} states that
\begin{equation}\label{e:MS-inequality}
 \|u\|_{L^{p_s^*}(\Rn)}^p
 \leq c(n,p)\frac{s(1-s)}{(n-sp)^{p-1}}[u]_{s,p}^p,
 \qquad u\in\Dhom^{s,p}(\Rn).
\end{equation}
In particular, if $s\in[s_0,1)$ and $p\in(1,n)$, then
\begin{equation}\label{e:uniform-Sobolev}
 \|u\|_{L^{p_s^*}(\Rn)}^p
 \leq c(n,p,s_0)(1-s)[u]_{s,p}^p.
\end{equation}
Unlike \eqref{e:uniform-PW-prelim}, estimate
\eqref{e:uniform-Sobolev} fixes the additive constant at infinity and
is therefore the coercive estimate appropriate to homogeneous spaces.

We finally set
\begin{equation}\label{e:Knp}
 K_{n,p}:=\frac1p\int_{\mathbb S^{n-1}}|e\cdot\sigma|^p\,
 d\mathcal H^{n-1}(\sigma),
\end{equation}
where $e\in\mathbb S^{n-1}$ is arbitrary.  With the convention
\eqref{e:critical-exponent}, the Bourgain--Brezis--Mironescu formula
\cite{BBM} reads
\begin{equation}\label{e:BBM-formula}
 \lim_{s\uparrow1}(1-s)[u]_{s,p}^p
 =K_{n,p}\int_{\Rn}|\nabla u|^p\,dx
 \qquad\text{for }u\in W^{1,p}(\Rn).
\end{equation}

\section{Homogeneous \texorpdfstring{$\Gamma$}{Gamma}-convergence and capacities}
\label{s:homogeneous-Gamma}

In this section we prove the convergence of the rescaled fractional
capacities of bounded open sets to the corresponding local capacity.
For this purpose, we need a BBM formula in homogeneous spaces with
respect to the $L^p_{\mathrm{loc}}(\Rn)$ topology.  The results
available in the literature concern related, but different, settings:
the Gagliardo seminorm on a bounded Lipschitz open set, with the
$L^p(U)$ topology, in \cite[Theorem~8]{PonceGamma}; whole-space
energies, with the $L^p(\Rn)$ topology, in
\cite[Theorem~3.1]{GennaioliStefani}; and energies with homogeneous
Dirichlet conditions on a bounded Lipschitz open set, again with the
global $L^p$ topology, in
\cite[Theorem~2.1]{CrismaleDeLucaKubinNinnoPonsiglione} for $p=2$
and in \cite[Theorem~1.1]{KubinSaraccoStefani} for general $p$ and
weighted kernels.
For our purposes we need convergence in the local topology together with control at infinity. 
The following proposition provides this homogeneous version of the BBM result.
\begin{proposition}[Homogeneous BBM $\Gamma$-convergence]
\label{p:homogeneous-Gamma}
Let $s_j\uparrow1$ and define $F_j:L^p_{\mathrm{loc}}(\Rn)\to
[0,+\infty]$ by
\begin{equation}\label{e:Fj-homogeneous}
 F_j(u):=
 \begin{cases}
  (1-s_j)[u]_{s_j,p}^p,&u\in\Dhom^{s_j,p}(\Rn),\\
  +\infty,&\text{otherwise}.
 \end{cases}
\end{equation}
Then
\begin{equation}\label{e:homogeneous-Gamma-limit}
 \Gamma(L^p_{\mathrm{loc}}(\Rn))\hbox{-}
 \lim_{j\to+\infty}F_j=F,
\end{equation}
where
\begin{equation}\label{e:F-homogeneous}
 F(u):=
 \begin{cases}
  K_{n,p}\displaystyle\int_{\Rn}|\nabla u|^p\,dx,
       &u\in\Dhom^{1,p}(\Rn),\\
  +\infty,&\text{otherwise}.
 \end{cases}
\end{equation}
Moreover, $(F_j)_j$ is equicoercive in
$L^p_{\mathrm{loc}}(\Rn)$: every sequence $(u_j)_j$ such that
\[
 \sup_jF_j(u_j)<+\infty
\]
is relatively compact in
$L^p_{\mathrm{loc}}(\Rn)$.
\end{proposition}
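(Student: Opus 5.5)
The proof splits into three parts: equicoercivity, the liminf inequality, and the limsup inequality. The uniform Sobolev inequality \eqref{e:uniform-Sobolev} controls behaviour at infinity throughout.

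\emph{Equicoercivity.} Suppose $\sup_jF_j(u_j)\le M$. On each ball $B_R$ the restriction satisfies $(1-s_j)[u_j]_{s_j,p;B_R}^p\le M$. By \eqref{e:uniform-Sobolev}, $\|u_j\|_{L^{p^*_{s_j}}(\Rn)}^p\le cM$, and since $p^*_{s_j}\to p^*>p$, H\"older's inequality gives a uniform bound on $\|u_j\|_{L^p(B_R)}$. Compactness in $L^p(B_R)$ then follows from the BBM compactness criterion for $(1-s_j)[\,\cdot\,]_{s_j,p;B_R}^p$ with bounded $L^p$ norms. This criterion is in \cite{BBM}, and also in \cite{PonceGamma} in the form of \cite[Theorem~1.2]{PoncePoincare}-type compactness. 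A diagonal argument over $R\in\N$ yields relative compactness in $L^p_{\mathrm{loc}}(\Rn)$.

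\emph{Liminf inequality.} Let $u_j\to u$ in $L^p_{\mathrm{loc}}$ with $\liminf F_j(u_j)<\infty$, and pass to a subsequence realizing the liminf. For every $R$, the localized $\Gamma$-liminf on the Lipschitz set $B_R$ from \cite[Theorem~8]{PonceGamma} gives $u\in W^{1,p}(B_R)$ and
\[
K_{n,p}\int_{B_R}|\nabla u|^p\le\liminf_j(1-s_j)[u_j]_{s_j,p;B_R}^p\le\liminf_jF_j(u_j).
\]
Letting $R\to\infty$ shows $\nabla u\in L^p(\Rn)$. It remains to place $u$ in $\Dhom^{1,p}$, that is, to show $u\in L^{p^*}$. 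The uniform bound $\|u_j\|_{L^{p^*_{s_j}}}\le C$ and Fatou's lemma along an a.e.\ convergent subsequence give $\|u\|_{L^{p^*}(B_R)}\le\liminf_j\|u_j\|_{L^{p^*_{s_j}}(B_R)}|B_R|^{o(1)}\le C$. Hence $u\in L^{p^*}(\Rn)$, and \eqref{e:local-homogeneous-space} gives $u\in\Dhom^{1,p}(\Rn)$.

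\emph{Limsup inequality.} By the density of $C_c^\infty$ in $\Dhom^{1,p}$ for the gradient norm, together with the lower semicontinuity of the $\Gamma$-limsup, it suffices to treat $u\in C_c^\infty(\Rn)$, provided the approximation also holds in $L^p_{\mathrm{loc}}$. This is true because $\nabla$-norm convergence implies $L^{p^*}$ convergence by Sobolev's inequality. For such $u$ we take the constant recovery sequence $u_j=u$. Then $u\in\Dhom^{s_j,p}$, and \eqref{e:BBM-formula} gives $F_j(u)\to K_{n,p}\int|\nabla u|^p$. For $u\notin\Dhom^{1,p}$ there is nothing to prove.

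The main obstacle is the coercivity step. It requires using \eqref{e:uniform-Sobolev} with constants uniform in $s$, while the exponent $p^*_{s_j}$ varies with $j$. This variation must be handled through H\"older's inequality on bounded sets, both for the local $L^p$ bounds and for the Fatou argument that identifies the limit in $L^{p^*}$. It is precisely this control at infinity that distinguishes the homogeneous statement from the results in \cite{PonceGamma}.
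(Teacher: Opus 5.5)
Your proposal is correct and follows essentially the same route as the paper. It uses the uniform Sobolev bound \eqref{e:uniform-Sobolev} with Ponce's compactness on balls and a diagonal argument, then the regional liminf \cite[Theorem~8]{PonceGamma} on $B_R$ followed by Fatou's lemma to place $u$ in $L^{p^*}(\Rn)$, and finally density of $C_c^\infty$ together with the BBM formula for the limsup. The one blemish is the factor $|B_R|^{o(1)}$ in your Fatou step: it is unnecessary, since $|u_j|^{p^*_{s_j}}\to|u|^{p^*}$ a.e.\ directly gives $\int_{\Rn}|u|^{p^*}\le\liminf_j\int_{\Rn}|u_j|^{p^*_{s_j}}<+\infty$, with no H\"older step.
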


\begin{proof}
We start with compactness.  By \eqref{e:uniform-Sobolev}, whenever $ \sup_jF_j(u_j)<+\infty$
\begin{equation}\label{e:global-coercivity}
 \sup_j\|u_j\|_{L^{p_{s_j}^*}(\Rn)}<+\infty.
\end{equation}
Consequently $(u_j)_j$ is bounded in $L^p(B_R)$ for every $R>0$.
The local compactness theorem of Bourgain--Brezis--Mironescu, in the
form proved in \cite[Theorem~1.2]{PoncePoincare}, applied on $B_R$ and
followed by a diagonal argument, gives relative compactness in
$L^p_{\mathrm{loc}}(\Rn)$.

Let now $u_j\to u$ in $L^p_{\mathrm{loc}}(\Rn)$ and suppose, without
loss of generality, that $\sup_jF_j(u_j)<+\infty$.  For every $R>0$,
the local $\Gamma$-liminf inequality in
\cite[Theorem~8]{PonceGamma} gives
\begin{equation}\label{e:local-liminf}
 \liminf_{j\to+\infty}F_j(u_j)
 \geq K_{n,p}\int_{B_R}|\nabla u|^p\,dx.
\end{equation}
Hence $u\in W^{1,p}_{\mathrm{loc}}(\Rn)$ and, by letting
$R\to+\infty$,
\begin{equation}\label{e:global-liminf-gradient}
 \liminf_{j\to+\infty}F_j(u_j)
 \geq K_{n,p}\int_{\Rn}|\nabla u|^p\,dx.
\end{equation}
It remains to identify the behaviour at infinity.  Up to a
subsequence, $u_j\to u$ almost everywhere in $\Rn$; since
$p_{s_j}^*\to p^*$, Fatou's lemma and \eqref{e:global-coercivity}
imply
\begin{equation}\label{e:limit-Lpstar}
 \int_{\Rn}|u|^{p^*}\,dx
 \leq\liminf_{j\to+\infty}
 \int_{\Rn}|u_j|^{p_{s_j}^*}\,dx<+\infty.
\end{equation}
Thus $u\in\Dhom^{1,p}(\Rn)$ by
\eqref{e:local-homogeneous-space}, and
\eqref{e:global-liminf-gradient} is the required $\Gamma$-liminf
inequality.

To prove the $\Gamma$-limsup inequality, let
$u\in\Dhom^{1,p}(\Rn)$.  By
\cite[Theorem~3.1]{BrascoGomezVazquez}, there exist
$\varphi_k\in C_c^\infty(\Rn)$ such that
\begin{equation}\label{e:homogeneous-density}
 \varphi_k\to u\quad\text{in }L^{p^*}(\Rn),
 \qquad
 \nabla\varphi_k\to\nabla u\quad\text{in }L^p(\Rn;\Rn).
\end{equation}
For every fixed $k$, the BBM formula yields
\begin{equation}\label{e:BBM-fixed-k}
 \lim_{j\to+\infty}F_j(\varphi_k)
 =K_{n,p}\int_{\Rn}|\nabla\varphi_k|^p\,dx.
\end{equation}
A standard diagonal choice $k=k(j)\uparrow+\infty$ in
\eqref{e:homogeneous-density}--\eqref{e:BBM-fixed-k} provides a
sequence converging to $u$ in $L^p_{\mathrm{loc}}(\Rn)$ and proves
the $\Gamma$-limsup inequality.
\end{proof}

We record the capacitary consequence needed below.  If
$T\subset\Rn$ is a bounded open set, define
\begin{equation}\label{e:fractional-capacity}
 \capsp(T):=\inf\left\{[u]_{s,p}^p:
 u\in\Dhom^{s,p}(\Rn),\ u\geq1\text{ a.e. on }T\right\}
\end{equation}
and
\begin{equation}\label{e:local-capacity}
 \capp(T):=\inf\left\{\int_{\Rn}|\nabla u|^p\,dx:
 u\in\Dhom^{1,p}(\Rn),\ u\geq1\text{ a.e. on }T\right\}.
\end{equation}
If $T\Subset B_R$, the corresponding relative capacities are
\begin{equation}\label{e:fractional-relative-capacity}
 \begin{aligned}
 \Capsp(T,B_R):=\inf\bigl\{[u]_{s,p}^p:\;&
 u\in\Dhom^{s,p}(\Rn),\ u=0\text{ a.e. on }B_R^c,\\[-2pt]
 &u\geq1\text{ a.e. on }T\bigr\}.
 \end{aligned}
\end{equation}
\begin{equation}\label{e:local-relative-capacity}
 \begin{aligned}
 \Capp(T,B_R):=\inf\biggl\{\int_{\Rn}|\nabla u|^p\,dx:\;&
 u\in\Dhom^{1,p}(\Rn),\ u=0\text{ a.e. on }B_R^c,\\[-2pt]
 &u\geq1\text{ a.e. on }T\biggr\}.
 \end{aligned}
\end{equation}

\begin{corollary}[Convergence of the minimum problems]
\label{c:capacity-convergence}
Let $T\subset\Rn$ be a bounded open set and let $s_j\uparrow1$.
Then
\begin{equation}\label{e:global-capacity-limit}
 \lim_{j\to+\infty}(1-s_j)\operatorname{cap}_{s_j,p}(T)
 =K_{n,p}\capp(T).
\end{equation}
If $T\Subset B_R$, then
\begin{equation}\label{e:relative-capacity-limit}
 \lim_{j\to+\infty}(1-s_j)C_{s_j,p}(T,B_R)
 =K_{n,p}\Capp(T,B_R).
\end{equation}
Finally, if $L_j\to+\infty$ and $T\Subset B_{L_j}$ eventually, then
\begin{equation}\label{e:expanding-capacity-limit}
 \lim_{j\to+\infty}(1-s_j)C_{s_j,p}(T,B_{L_j})
 =K_{n,p}\capp(T).
\end{equation}
\end{corollary}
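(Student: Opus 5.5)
The plan is to deduce all three statements from Proposition~\ref{p:homogeneous-Gamma}, adding the constraints through standard stability properties of $\Gamma$-convergence. For \eqref{e:global-capacity-limit}, consider the constrained functionals $G_j:=F_j+\iota_{\{u\geq1\text{ on }T\}}$ and $G:=F+\iota_{\{u\geq1\text{ on }T\}}$. The constraint $u\geq1$ a.e.\ on $T$ is closed under $L^p_{\mathrm{loc}}$ convergence, because a.e.\ convergent subsequences preserve it. Hence the $\Gamma$-liminf inequality for $G_j$ follows at once from the one for $F_j$.

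Equicoercivity in Proposition~\ref{p:homogeneous-Gamma} gives that minimizers, or almost minimizers, $u_j$ of $(1-s_j)\capsp(T)$ with $s=s_j$ are precompact in $L^p_{\mathrm{loc}}$. The energy bound needed for this comes from testing with a fixed $\varphi\in C_c^\infty$, $\varphi\geq1$ on $T$, since \eqref{e:BBM-formula} makes $(1-s_j)[\varphi]_{s_j,p}^p$ bounded. Any limit $u$ satisfies $u\geq1$ on $T$ and $u\in\Dhom^{1,p}$, so
\[
\liminf_j(1-s_j)\operatorname{cap}_{s_j,p}(T)\geq K_{n,p}\capp(T).
\]
For the reverse inequality, the recovery sequence must satisfy the constraint. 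I would use density in $\capp(T)$: take $\varphi\in C_c^\infty(\Rn)$ with $\varphi\geq1$ on $T$ and $\int|\nabla\varphi|^p\leq\capp(T)+\eta$. This follows from density of $C_c^\infty$ in $\Dhom^{1,p}$ applied to $(1+\eta')u$ and a truncation or mollification argument; since $T$ is open and bounded, one can instead work with $\overline T$ and use the outer regularity of $\capp$ on compacts. Plugging the fixed $\varphi$ into \eqref{e:BBM-formula} gives
\[
\limsup_j(1-s_j)\operatorname{cap}_{s_j,p}(T)\leq K_{n,p}(\capp(T)+\eta).
\]
The relative case \eqref{e:relative-capacity-limit} works the same way. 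The condition $u=0$ on $B_R^c$ is closed in $L^p_{\mathrm{loc}}$, and test functions are taken in $C_c^\infty(B_R)$ with $\varphi\geq1$ on $T$. Density of such functions in the relative problem follows by dilating slightly: $\varphi(x)$ is replaced by $\varphi(x/t)$ with $t<1$ close to $1$, which is legitimate because $T\Subset B_R$ and $T\subset t^{-1}T$ fails only mildly. I would rather use $\varphi_t(x)=\varphi(x/t)$ for $\varphi$ admissible for $tT$-type enlargements, or simply use the continuity $t\mapsto\Capp(tT,B_R)$ from the right.

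For \eqref{e:expanding-capacity-limit}, monotonicity in the outer ball gives the lower bound. Indeed, $C_{s_j,p}(T,B_{L_j})\geq\operatorname{cap}_{s_j,p}(T)$ because every admissible function is admissible for the global problem. So \eqref{e:global-capacity-limit} yields $\liminf\geq K_{n,p}\capp(T)$. For the upper bound, take a fixed $\varphi\in C_c^\infty(B_M)$ almost optimal for $\capp(T)$. Once $L_j\geq M$, it is admissible for $C_{s_j,p}(T,B_{L_j})$, so
\[
\limsup_j(1-s_j)C_{s_j,p}(T,B_{L_j})\leq\lim_j(1-s_j)[\varphi]^p_{s_j,p}=K_{n,p}\int|\nabla\varphi|^p\leq K_{n,p}(\capp(T)+\eta).
\]

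The main obstacle is the upper bound's approximation step. One must produce smooth compactly supported competitors satisfying the constraint $\geq1$ on the open set $T$ whose Dirichlet energy nearly attains $\capp(T)$ (respectively $\Capp(T,B_R)$). The subtle points are the mismatch between "a.e.\ on $T$" for $W^{1,p}$ functions and pointwise constraints for approximants, and the need for the support to stay inside $B_R$. I would handle the first by taking $(1+\eta)u$ and mollifying, which works on slightly smaller compact subsets of $T$. This then requires showing $\capp(T)=\sup_{K\Subset T}\capp(K)$, i.e.\ inner regularity of capacity on open sets, which I would take as a standard fact for $p$-capacity. The same limiting argument gives the fractional counterpart, which is not needed here since BBM is applied only to fixed smooth functions.
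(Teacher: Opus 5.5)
There is a genuine gap in your upper bound. Your lower bounds are fine: almost minimizers, the equicoercivity and $\Gamma$-liminf of Proposition~\ref{p:homogeneous-Gamma}, closedness of the constraints, and the monotonicity $C_{s_j,p}(T,B_{L_j})\geq\operatorname{cap}_{s_j,p}(T)$. The upper bound, however, rests on a claim that is false for general bounded open $T$. You assume there is $\varphi\in C_c^\infty(\Rn)$ with $\varphi\geq1$ on $T$ and $\int|\nabla\varphi|^p\,dx\leq\capp(T)+\eta$. A continuous function that is $\geq1$ on $T$ is $\geq1$ on $\overline T$, so smooth competitors only see the closure. For a counterexample, take $T=\bigcup_k B_{r_k}(q_k)$ with $(q_k)_k$ dense in $B_1$ and $\sum_k r_k^{n-p}<1$. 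By subadditivity and scaling, $\capp(T)\leq\capp(B_1)\sum_k r_k^{n-p}<\capp(B_1)$. Yet every continuous $\varphi\geq1$ on $T$ is $\geq1$ on $\overline T\supseteq B_1$, so its energy is at least $\capp(B_1)$.

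Neither of your repairs closes this gap. Passing to $\overline T$ only gives $\limsup\leq K_{n,p}\capp(\overline T)$, which can be strictly larger than $K_{n,p}\capp(T)$. Mollifying $(1+\eta)u$ gives functions that are $\geq1$ only on compact $K\Subset T$. These are not admissible for $\operatorname{cap}_{s_j,p}(T)$, and inner regularity of $\capp$ points the wrong way, since a competitor for $K$ only bounds $\operatorname{cap}_{s_j,p}(K)\leq\operatorname{cap}_{s_j,p}(T)$. That route would need inner regularity of $\operatorname{cap}_{s,p}$ uniformly in $s$, which is exactly the ``fractional counterpart'' you set aside. The relative case has the same defect. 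In addition, the dilation $\varphi(x/t)$ with $t<1$ is $\geq1$ only on $tT$, which need not contain $T$.

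What you are missing is that no smoothing is needed, because the BBM formula \eqref{e:BBM-formula} holds for every $u\in W^{1,p}(\Rn)$.
\begin{itemize}
\item For \eqref{e:global-capacity-limit} and \eqref{e:expanding-capacity-limit}: given $v\in\Dhom^{1,p}(\Rn)$ with $v\geq1$ a.e.\ on $T$, set $v_R:=\eta_Rv$, where $\eta_R\in C_c^\infty(B_{2R})$, $\eta_R=1$ on $B_R\supset T$, and $|\nabla\eta_R|\leq c/R$.
\item Then $v_R\in W^{1,p}(\Rn)$ has compact support and still satisfies $v_R\geq1$ a.e.\ on $T$. Hence it is admissible for $\operatorname{cap}_{s_j,p}(T)$, and also for $C_{s_j,p}(T,B_{L_j})$ once $L_j\geq2R$.
\item Moreover $\nabla v_R\to\nabla v$ in $L^p$, by H\"older's inequality on $B_{2R}\setminus B_R$ and $v\in L^{p^*}(\Rn)$.
\item Applying BBM to the fixed $v_R$ and then letting $R\to\infty$ gives both limits.
\item For \eqref{e:relative-capacity-limit} it is simpler still. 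Every competitor for $\Capp(T,B_R)$ already lies in $W^{1,p}(\Rn)$, since it has bounded support and is in $L^{p^*}$. It is therefore directly admissible for $C_{s_j,p}(T,B_R)$, and BBM applied to it gives the limsup.
\end{itemize}
This is how the paper proceeds.
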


\begin{proof}
Formula~\eqref{e:global-capacity-limit} follows from
Proposition~\ref{p:homogeneous-Gamma} and the fundamental theorem of $\Gamma$-convergence (cf. \cite[Theorem~7.8]{DalMaso-book}), applied to the functionals $F_j$, $F$ restricted to the closed set
\[
 \mathcal A_T:=\{v\in L^p_{\mathrm{loc}}(\Rn):
                         v\geq1\text{ a.e. on }T\}.
\]
Equicoercivity and the lower bound are inherited from
Proposition~\ref{p:homogeneous-Gamma}.  To check that recovery
sequences can be chosen in $\mathcal A_T$, let
$v\in\mathcal A_T\cap\Dhom^{1,p}(\Rn)$ and take
$v_R=\eta_Rv$, where $\eta_R\in C_c^\infty(B_{2R})$,
$0\leq\eta_R\leq1$, $\eta_R=1$ on $B_R$,
$|\nabla\eta_R|\leq c/R$, and $T\Subset B_R$.  Then
$v_R\in\mathcal A_T\cap W^{1,p}(\Rn)$, and
\begin{equation}\label{e:capacity-cut-off}
 \int_{\Rn}|v|^p|\nabla\eta_R|^p\,dx
 \leq c\left(\int_{B_{2R}\setminus B_R}|v|^{p^*}\,dx\right)^{p/p^*}
 \longrightarrow0.
\end{equation}
Thus $\nabla v_R\to\nabla v$ in $L^p(\Rn;\Rn)$.  The BBM formula
for fixed $R$, followed by a diagonal choice $R=R(j)\to+\infty$,
gives the required recovery sequence.  The convergence of the
minimum values now follows from the cited theorem.

A similar argument works for the relative capacities.
We give the details  for the reader's convenience. 

Fix $R$ with $T\Subset B_R$ and choose admissible $u_j$ such that
\[
 (1-s_j)[u_j]_{s_j,p}^p
 \leq (1-s_j)C_{s_j,p}(T,B_R)+\frac1j.
\]
By truncation we may assume $0\leq u_j\leq1$.  A fixed function
in $C_c^\infty(B_R)$ equal to one near $\overline T$, together
with the BBM formula, bounds the right-hand side uniformly in $j$.
Along a subsequence realizing the lower limit,
Proposition~\ref{p:homogeneous-Gamma} therefore gives
$u_j\to u$ in $L^p_{\mathrm{loc}}(\Rn)$, with
$u\in\Dhom^{1,p}(\Rn)$.  Since $u_j=1$ a.e. on $T$ and
$u_j=0$ a.e. on $B_R^c$, the same conditions hold for $u$.
The lower bound in that proposition yields
\[
 \liminf_{j\to+\infty}(1-s_j)C_{s_j,p}(T,B_R)
 \geq K_{n,p}\int_{\Rn}|\nabla u|^p\,dx
 \geq K_{n,p}\Capp(T,B_R).
\]
Conversely, every $v$ admissible for $\Capp(T,B_R)$ belongs to
$W^{1,p}(\Rn)$, since it has bounded support and belongs to
$L^{p^*}(\Rn)$.  It is therefore admissible for
$C_{s_j,p}(T,B_R)$ for every $j$, and
\[
 \limsup_{j\to+\infty}(1-s_j)C_{s_j,p}(T,B_R)
 \leq\lim_{j\to+\infty}(1-s_j)[v]_{s_j,p}^p
 =K_{n,p}\int_{\Rn}|\nabla v|^p\,dx.
\]
Taking the infimum over $v$ proves
\eqref{e:relative-capacity-limit}.

Finally, fix $R$ with $T\Subset B_R$.  For all sufficiently
large $j$ we have $L_j\geq R$, and
monotonicity with respect to the outer domain gives
\begin{equation}\label{e:capacity-sandwich}
 \operatorname{cap}_{s_j,p}(T)
 \leq C_{s_j,p}(T,B_{L_j})
 \leq C_{s_j,p}(T,B_R).
\end{equation}
First letting $j\to+\infty$ and then $R\to+\infty$, and using
$\Capp(T,B_R)\downarrow\capp(T)$ (which follows from the same
cut-off argument as in \eqref{e:capacity-cut-off}), proves
\eqref{e:expanding-capacity-limit}.
\end{proof}

\begin{remark}[Capacity of thin obstacles]\label{r:capacity-thin-obstacles}
The convergence in \eqref{e:global-capacity-limit} does not hold for every bounded set. Recall that capacity is extended by outer regularity to general bounded sets $T\subset\R^n$, i.e.
\[
 \operatorname{cap}_{s,p}(T)
 :=\inf\{\operatorname{cap}_{s,p}(O):T\subset O,
                                  \ O\text{ bounded and open}\}.
\]
Outer regularity and Corollary~\ref{c:capacity-convergence} give only
\begin{equation}\label{e:capacity-general-limsup}
 \limsup_{j\to\infty}(1-s_j)\operatorname{cap}_{s_j,p}(T)
 \leq K_{n,p}\capp(T).
\end{equation}
Indeed, the inequality may be strict: critical Cantor-type compact sets can have Hausdorff dimension $n-p$ and
positive $p$-capacity, while their fractional capacities vanish for every $s\in(0,1)$ as $n-p<n-sp$ 
(see \cite[Section~5.3 and Theorem~5.1.13]{AdamsHedberg}).

For $p=2$, the upper bound for compact sets and the full limit for
closures of bounded Lipschitz open sets are proved in
\cite[Proposition~1.2]{CintiOgnibeneRuffini}.

On the other hand, using the monotonicity property of capacities under set inclusion, the equality in \eqref{e:global-capacity-limit}
can be established more generally if $\capp(T)=\capp(\mathrm{int}T)$. 
\end{remark}

Following \cite[p.~3510]{FocardiAperiodic}, for $m\geq2$, $R>1$,
and $T\Subset B_R$, we also introduce the interior relative capacity
\begin{equation}\label{e:interior-relative-capacity-new}
 \begin{aligned}
 \operatorname{cap}_{s,p}(T,B_{mR};R)
 :=\inf\bigl\{[\varphi]_{s,p;B_{mR}}^p:\;&
 \varphi\in\Dhom^{s,p}(\Rn),\quad
 \varphi=0\text{ a.e. on }\Rn\setminus B_R,\\[-2pt]
 &\varphi\geq1\quad\text{a.e. on }T\bigr\}.
 \end{aligned}
\end{equation}
The following lemma gives the uniform-in-$s$ comparison of the
interior and global capacities, together with the complete limit
for expanding domains.  It is the counterpart of
\cite[Lemma~2.12]{FocardiAperiodic} in the present normalization.
\begin{lemma}[Uniform comparison and convergence of the interior capacity]
\label{l:interior-capacity-BBM}
Let $T\subset\Rn$ be bounded and open and let $m\geq2$ be fixed.
There is a constant $c_m=c(n,p,s_0,m)>0$ such that, for every
$s\in[s_0,1)$ and $R>1$ with $T\Subset B_R$,
\begin{equation}\label{e:interior-global-comparison-new}
 \frac{\capsp(T)}{1+c_m(1-s)}
 \leq\operatorname{cap}_{s,p}(T,B_{mR};R)
 \leq C_{s,p}(T,B_R).
\end{equation}
Consequently, if $L_j\to+\infty$ and $s_j\uparrow1$, then
\begin{equation}\label{e:interior-capacity-limit-new}
 \lim_{j\to+\infty}(1-s_j)
 \operatorname{cap}_{s_j,p}(T,B_{mL_j};L_j)
 =K_{n,p}\capp(T).
\end{equation}
\end{lemma}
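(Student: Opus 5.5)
The upper bound in \eqref{e:interior-global-comparison-new} is immediate. If $\varphi$ is admissible for $C_{s,p}(T,B_R)$, then it is admissible for $\operatorname{cap}_{s,p}(T,B_{mR};R)$. Moreover, $[\varphi]_{s,p;B_{mR}}^p\le[\varphi]_{s,p}^p$, since the integrand is nonnegative and $B_{mR}\times B_{mR}\subset\Rn\times\Rn$.

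For the lower bound, take $\varphi$ admissible for the interior capacity. It is then admissible for $\capsp(T)$, since it belongs to $\Dhom^{s,p}(\Rn)$ and satisfies $\varphi\ge1$ on $T$. We split the full seminorm as
\[
 [\varphi]_{s,p}^p=[\varphi]_{s,p;B_{mR}}^p+2\int_{B_{mR}}\int_{B_{mR}^c}\frac{|\varphi(x)-\varphi(y)|^p}{|x-y|^{n+sp}}\,dy\,dx .
\]
In the cross term $\varphi(y)=0$, and $\varphi(x)\neq0$ only for $x\in B_R$. For such $x$ and $y\in B_{mR}^c$ we have $|x-y|\ge(m-1)R$. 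The cross term is therefore bounded by
\[
 2\int_{B_R}|\varphi|^p\int_{\{|z|\ge(m-1)R\}}|z|^{-n-sp}\,dz\,dx
 =\frac{2n\omega_n}{sp}\,((m-1)R)^{-sp}\|\varphi\|^p_{L^p(B_R)} .
\]
Here $1/s\le1/s_0$ keeps the constant uniform in $s$.

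The main step is to control $\|\varphi\|_{L^p(B_R)}^p$ by $(1-s)R^{sp}[\varphi]^p_{s,p;B_{mR}}$. I would use the scaled Poincar\'e--Wirtinger inequality \eqref{e:uniform-PW-scaled} with $A=B_m$, $O=B_m\setminus \overline B_1$ and $r=R$. The mean of $\varphi$ over $RO$ vanishes, since $\varphi=0$ there. This gives
\[
 \|\varphi\|^p_{L^p(B_{mR})}\le c(n,p,s_0,m)(1-s)R^{sp}[\varphi]_{s,p;B_{mR}}^p .
\]
Combining the two estimates,
\[
 [\varphi]^p_{s,p}\le\bigl(1+c_m(1-s)\bigr)[\varphi]^p_{s,p;B_{mR}} ,
\]
where the powers of $R$ cancel and $(m-1)^{-sp}\le1$. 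Taking the infimum over $\varphi$ yields the lower bound. The points to check are that the Poincar\'e constant is uniform in $s\in[s_0,1)$, which holds by \eqref{e:uniform-PW-prelim}, and that the $R$-scaling is exact, which is what makes $c_m$ independent of $R$. This is the step I expect to require the most care.

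For \eqref{e:interior-capacity-limit-new}, apply \eqref{e:interior-global-comparison-new} with $s=s_j$ and $R=L_j$; this is allowed since $L_j>1$ and $T\Subset B_{L_j}$ eventually. Multiply by $(1-s_j)$. The left side $(1-s_j)\operatorname{cap}_{s_j,p}(T)/(1+c_m(1-s_j))$ tends to $K_{n,p}\capp(T)$ by \eqref{e:global-capacity-limit}. The right side $(1-s_j)C_{s_j,p}(T,B_{L_j})$ tends to the same limit by \eqref{e:expanding-capacity-limit}. Squeezing concludes the proof.
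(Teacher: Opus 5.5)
Your proposal is correct and follows the paper's proof essentially step for step. The upper bound comes from restricting the energy to $B_{mR}\times B_{mR}$. The lower bound combines the explicit tail estimate with the scaled uniform Poincar\'e--Wirtinger inequality on the fixed pair $(B_m,B_m\setminus B_1)$, so that the powers of $R$ cancel. The limit then follows by squeezing between \eqref{e:global-capacity-limit} and \eqref{e:expanding-capacity-limit}.
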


\begin{proof}
Let $\varphi$ be admissible in
\eqref{e:interior-relative-capacity-new}.  Since $\varphi=0$ on
$B_{mR}\setminus B_R$, the scaled uniform
Poincar\'e--Wirtinger inequality \eqref{e:uniform-PW-scaled}, applied
to the fixed pair $(B_m,B_m\setminus B_1)$, gives
\begin{equation}\label{e:interior-capacity-PW-new}
 \|\varphi\|_{L^p(B_{mR})}^p
 \leq c_m(1-s)R^{sp}[\varphi]_{s,p;B_{mR}}^p.
\end{equation}
A direct integration of the tail, as in (ii) of
Lemma~\ref{l:singular-kernel-bounds}, and \eqref{e:interior-capacity-PW-new} therefore yield
\begin{align}
 [\varphi]_{s,p}^p
 &= [\varphi]_{s,p;B_{mR}}^p
 +2\int_{B_R}\int_{B_{mR}^c}
 \frac{|\varphi(x)|^p}{|x-y|^{n+sp}}\,dy\,dx\notag\\
 &\leq[\varphi]_{s,p;B_{mR}}^p
 +\frac{2n\omega_n}{sp}((m-1)R)^{-sp}\|\varphi\|_{L^p(B_R)}^p\notag\\
 &\leq\bigl(1+c_m(1-s)\bigr)
 [\varphi]_{s,p;B_{mR}}^p.
 \label{e:interior-capacity-tail-new}
\end{align}
Taking the infimum proves the first inequality in
\eqref{e:interior-global-comparison-new}.  The second follows by
restricting the energy of each competitor for $C_{s,p}(T,B_R)$
to $B_{mR}\times B_{mR}$.  Multiplying the resulting inequalities
by $1-s_j$, with $R=L_j$, and applying
Corollary~\ref{c:capacity-convergence} proves
\eqref{e:interior-capacity-limit-new}.
\end{proof}
\begin{remark}
The use of the scaled uniform Poincar\'e--Wirtinger inequality leading to \eqref{e:interior-capacity-PW-new} is not necessary.
Indeed, let $C_{n,p}$ denote the constant $c(n,p)$ in \eqref{e:MS-inequality}.
Since $\varphi=0$ outside $B_R$, H\"older's inequality and \eqref{e:MS-inequality} give
\[
 \|\varphi\|_{L^p(B_R)}^p
 \leq \omega_n^{sp/n}R^{sp}\|\varphi\|_{L^{p_s^*}(\Rn)}^p
 \leq C_{n,p}\omega_n^{sp/n}R^{sp}
       \frac{s(1-s)}{(n-sp)^{p-1}}[\varphi]_{s,p}^p.
\]
Inserting this estimate in the second line of \eqref{e:interior-capacity-tail-new} yields
\[
 \left(1-
 \frac{2nC_{n,p}\omega_n^{1+sp/n}}{p(n-sp)^{p-1}(m-1)^{sp}}
 (1-s)\right)[\varphi]_{s,p}^p
 \leq [\varphi]_{s,p;B_{mR}}^p.
\]
Having fixed $p\in(1,n)$, the coefficient multiplying $1-s$ is uniformly bounded 
for $s\in(0,1)$ and $m\geq2$.
Thus the argument above gives the same asymptotically exact comparison.
\end{remark}

Finally, we point out that the gain with respect to the fixed-order comparison is that the relative error in \eqref{e:interior-global-comparison-new} tends
to zero while $m$ remains fixed.  In
\cite[Lemma~2.12]{FocardiAperiodic}, the corresponding conclusion
is obtained by letting the ratio of the outer and inner radii
diverge.  Here the uniform Poincar\'e inequality supplies the
additional factor $1-s_j$, so a fixed ratio $m$ suffices and no
relation between $L_j$ and $s_j$ is needed.  This is a consequence
of the relative tail estimate, rather than merely of multiplying
a fixed-order capacity estimate by $1-s_j$.

\section{The uniform joining lemma with filling}
\label{s:uniform-joining-filling}

In this section we establish the main technical tool to prove the
asymptotics of the rescaled Gagliardo seminorms. More precisely,
we prove a joining and filling lemma for sequences with bounded
rescaled fractional energy.  The construction follows
\cite[Lemmas~3.8--3.9]{FocardiAperiodic}, up to checking that the relevant constants there are uniform
as $s_j\uparrow1$.  Only separation of the centres is needed to
make the functions constant on suitable annuli.  Filling the
enclosed balls with the same constants then separates the energy
of the filled sequence from the internal energies of the original
one, with an error controlled by the ratio of the two radii.
These estimates are collected in Lemma~\ref{l:uniform-joining}.
The conclusion concerning averages on Vorono\"i cells related to Delone sets of points 
is stated separately in Corollary~\ref{c:Delone-joining}.

In this section only, the restriction $p<n$ is dropped.  Let
$U\subset\Rn$ be a bounded open set, let $p\in(1,+\infty)$, and
let $s_j\uparrow1$.  
After discarding finitely many terms, fix
$s_0\in(\max\{0,1-n/p\},1)$ such that $s_j\in[s_0,1)$.
For $u\in L^p(U)$ and every measurable set
$E\subseteq U\times U$ define the normalized interaction energy by
\begin{equation}\label{e:defectj}
 \defectj(u,E):=(1-s_j)
 \int_E\frac{|u(x)-u(y)|^p}{|x-y|^{n+s_jp}}\,dx\,dy.
\end{equation}
For every $j\in\N$, let $\Ieps$ be a finite index set, let
$\{\xiij\}_{\ii\in\Ieps}\subset U$, and let
$\rj>0$ be such that
\begin{equation}\label{e:separation}
 |\xiij-x_j^{\kk}|\geq2\rj\quad\text{if }\ii\neq\kk,
 \qquad
 B_{\rj}(\xiij)\subset U\quad\text{for every }\ii\in\Ieps.
\end{equation}
We assume throughout that $\rj\to0$.  With fixed $m\in\N$, $m\geq2$,
for integers $h\geq1$ and $\ii\in\Ieps$ set, as in
\cite[Lemma~3.9]{FocardiAperiodic},
\begin{equation}\label{e:joining-sets}
 \Bijh:=B_{m^{-3h}\rj}(\xiij),
 \qquad
 \Cijh:=B_{m^{-3h-1}\rj}(\xiij)
 \setminus\overline{B}_{m^{-3h-2}\rj}(\xiij).
\end{equation}
We complement the joining lemma of \cite[Lemma~3.9]{FocardiAperiodic} needed to extract
the underlying capacitary contribution on sequences $(u_j)_j$ with equi-bounded 
$\defectj(u_j,U\times U)$ energies, with a filling procedure to recover the BBM localization.
In addition, on those sequences, filling separates the two contributions with an error tending to zero 
without sending $m$ to infinity.
\begin{lemma}[Uniform joining lemma with filling]\label{l:uniform-joining}
Let $u_j\to u$ strongly in $L^p(U)$ and assume that
\begin{equation}\label{e:energy-bound}
 \sup_j\defectj(u_j,U\times U)<+\infty.
\end{equation}
For every fixed pair of integers $m\geq2$, $N\geq1$, there exist
$h_j\in\{1,\ldots,N\}$ and $w_j\in W^{s_j,p}(U)$ such that
\begin{equation}\label{e:equal1}
 w_j=u_j\quad\text{on }U\setminus
 \bigcup_{\ii\in\Ieps}
 \bigl(\overline B_j^{\ii,h_j}\setminus B_j^{\ii,h_j+1}\bigr),
\end{equation}
\begin{equation}\label{e:equal2}
 w_j=(u_j)_{\Cij}\quad\text{on }\Cij,
\end{equation}
and, for every measurable set $E\subseteq U\times U$,
\begin{equation}\label{e:uniform-joining-error}
 \left|\defectj(u_j,E)-\defectj(w_j,E)\right|
 \leq\frac{c_m}{N}\defectj(u_j,U\times U).
\end{equation}
Here and below $c_m=c(n,p,s_0,m)>0$ is independent of $j$ and $N$.
Moreover,
\begin{equation}\label{e:wj-convergence}
 w_j\longrightarrow u\qquad\text{strongly in }L^p(U).
\end{equation}

For the filling construction, let
\begin{equation}\label{e:filling-balls}
 \rho_j:=m^{-3h_j-2}\rj,
 \qquad
 D_j^{\ii}:=B_{\rho_j}(\xiij),
 \qquad
 P_j^{\ii}:=B_{m\rho_j}(\xiij).
\end{equation}
Then $\Cij=P_j^{\ii}\setminus\overline{D_j^{\ii}}$.  Define the filled sequence by
\begin{equation}\label{e:vj}
 v_j(x):=
 \begin{cases}
  (u_j)_{\Cij},&x\in D_j^{\ii},\quad\ii\in\Ieps,\\
  w_j(x),&x\in U\setminus\displaystyle\bigcup_{\ii\in\Ieps}D_j^{\ii}.
 \end{cases}
\end{equation}
Then $v_j\in W^{s_j,p}(U)$,
\begin{equation}\label{e:vj-convergence}
 v_j\longrightarrow u\qquad\text{strongly in }L^p(U),
\end{equation}
and the filling satisfies the following estimates.  For every
measurable $E\subseteq U\times U$, set
\begin{equation}\label{e:SjE}
 \mathcal S_j(E):=\sum_{\ii\in\Ieps}
 \defectj\bigl(w_j,E\cap(P_j^{\ii}\times P_j^{\ii})\bigr),
 \qquad\mathcal S_j:=\mathcal S_j(U\times U).
\end{equation}
There is a constant $c=c(n,p,s_0)>0$, independent of $j,m,N$,
such that, on putting
\begin{equation}\label{e:filling-error-factor}
 \delta_{j,m}:=c(1-s_j)^{1/p}m^{-s_j},
\end{equation}
we have
\begin{equation}\label{e:filling-difference}
 \left|\defectj(w_j,E)-\defectj(v_j,E)-\mathcal S_j(E)\right|
 \leq\delta_{j,m}
       \bigl(\defectj(v_j,U\times U)+\mathcal S_j\bigr).
\end{equation}
In particular,
\begin{equation}\label{e:filling-lower}
 \defectj(w_j,U\times U)
 \geq(1-\delta_{j,m})
       \bigl(\defectj(v_j,U\times U)+\mathcal S_j\bigr)
\end{equation}
and
\begin{equation}\label{e:filling-upper}
 \defectj(w_j,U\times U)
 \leq(1+\delta_{j,m})
       \bigl(\defectj(v_j,U\times U)+\mathcal S_j\bigr).
\end{equation}
For each fixed $m\geq2$, $\delta_{j,m}\to0$ as $j\to\infty$.

Finally, if $u_j\in L^\infty(U)$, then
\begin{equation}\label{e:Linfty}
 \|w_j\|_{L^\infty(U)}\leq\|u_j\|_{L^\infty(U)},
 \qquad
 \|v_j\|_{L^\infty(U)}\leq\|u_j\|_{L^\infty(U)}.
\end{equation}
\end{lemma}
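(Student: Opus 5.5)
The plan is to follow the construction of \cite[Lemmas~3.8--3.9]{FocardiAperiodic} and track the constants, using the uniform inequalities \eqref{e:uniform-PW-scaled} and the tail bounds of Lemma~\ref{l:singular-kernel-bounds}; the factor $1-s_j$ in \eqref{e:defectj} absorbs the $(1-s)$ produced by Poincar\'e--Wirtinger.

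\emph{Joining.} For each $h\in\{1,\dots,N\}$ consider the thick annuli $A^{\ii,h}_j:=B_j^{\ii,h}\setminus \overline B_j^{\ii,h+1}$. These are pairwise disjoint in $h$ (the radii decrease by $m^{3}$) and in $\ii$, by \eqref{e:separation}. Hence
\[
 \sum_{h=1}^N\sum_{\ii}\defectj\bigl(u_j,(A^{\ii,h}_j)^{\times 2}\cup \text{neighbouring strips}\bigr)\le c\,\defectj(u_j,U\times U),
\]
so some $h_j$ carries at most $\frac1N$ of the energy. For that $h_j$, take a cut-off $\varphi^{\ii}_j$ equal to $1$ on $\Cij$, vanishing outside $A^{\ii,h_j}_j$, with gradient $\lesssim (m^{-3h_j}\rj)^{-1}$, and set
\[
 w_j:=u_j+\sum_{\ii}\varphi^{\ii}_j\bigl((u_j)_{\Cij}-u_j\bigr).
\]
This gives \eqref{e:equal1}--\eqref{e:equal2} and \eqref{e:Linfty}, the latter after truncation, since $w_j$ is a convex combination of $u_j$ and its averages. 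The error in \eqref{e:uniform-joining-error} splits into two parts. The first is the energy of $u_j$ on the selected annuli, which is $\le \frac{c}{N}\defectj(u_j,U\times U)$. The second consists of terms like
\[
 (1-s_j)\iint\frac{|\varphi(x)-\varphi(y)|^p|u_j-(u_j)_C|^p}{|x-y|^{n+s_jp}}
 \lesssim (1-s_j)\,r^{-s_jp}\|u_j-(u_j)_{C}\|^p_{L^p(A)},
\]
using the elementary kernel integral. Here the factor $(1-s_j)$ is cancelled by $\int_0 t^{p-1-s_jp}\,dt\sim 1/(p(1-s_j))$ near the diagonal, while the $r^{-s_jp}$ factor is compensated by \eqref{e:uniform-PW-scaled} applied to the rescaled pair $(A,C)$ with constant $c_m$. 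This produces $c_m(1-s_j)[u_j]^p_{s_j,p;A}$ and hence the bound $\frac{c_m}{N}\defectj$. Convergence \eqref{e:wj-convergence} follows from the same Poincar\'e bound: $\|w_j-u_j\|_{L^p}^p\le c_m\rj^{s_jp}\defectj(u_j,U\times U)\to0$.

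\emph{Filling.} $v_j$ differs from $w_j$ only on the balls $D^{\ii}_j$, where $w_j$ is replaced by the constant $a_{\ii}:=(u_j)_{\Cij}$. Since $w_j=v_j$ on $C^{\ii}_j$, I decompose $E$ according to whether $x,y$ lie in some $D^{\ii}_j$, or in $P^{\ii}_j\setminus D^{\ii}_j$, or elsewhere. On $(P^{\ii}_j)^2$ we have $v_j$ constant on $D^{\ii}_j\cup C^{\ii}_j$, so the contribution of $v_j$ there is only the cross terms. Pairs with both points outside every $D^{\ii}_j$ are counted identically. The remaining discrepancy comes from pairs $(x,y)$ with $x\in D^{\ii}_j$ and $y\notin P^{\ii}_j$. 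Because of the separating gap $C^{\ii}_j$, for such pairs $|x-y|\ge (m-1)\rho_j$. For these I write $w_j(x)-w_j(y)=(w_j(x)-a_{\ii})+(a_{\ii}-v_j(y))$ and use the elementary inequality $\bigl||a+b|^p-|b|^p\bigr|\le \eta|b|^p+c_\eta|a|^p$. Alternatively, since $p$ is general, I use the $L^p$-triangle inequality on the whole expression, which gives errors of the form $X^{1/p}Y^{1/p}$-type mixed terms and explains the factor $(1-s_j)^{1/p}$ in $\delta_{j,m}$. The tail term is
\[
 (1-s_j)\int_{D}|w_j-a_{\ii}|^p\int_{|x-y|\ge(m-1)\rho_j}\frac{dy}{|x-y|^{n+s_jp}}\lesssim (1-s_j)(m\rho_j)^{-s_jp}\|w_j-a_{\ii}\|^p_{L^p(P)}.
\]
By \eqref{e:uniform-PW-scaled} on $(B_m,B_m\setminus \overline B_1)$ this is bounded by $c(1-s_j)m^{-s_jp}\cdot$(const) times $\defectj(w_j,P\times P)$, with a constant independent of $m$. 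The needed uniformity in $m$ comes from using the Sobolev-type remark after Lemma~\ref{l:interior-capacity-BBM} in place of the $m$-dependent Poincar\'e constant. Taking $p$-th roots and summing over $\ii$ gives \eqref{e:filling-difference} with $\delta_{j,m}=c(1-s_j)^{1/p}m^{-s_j}$. Then \eqref{e:filling-lower}--\eqref{e:filling-upper} follow with $E=U\times U$, and \eqref{e:vj-convergence} follows as before from Poincar\'e on $P^{\ii}_j$.

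\emph{Main obstacle.} The delicate step is getting the $m$-independent constant in the filling estimate, since the Poincar\'e constant on $(B_m,B_m\setminus B_1)$ a priori depends on $m$. I would try first to bound $\|w_j-a_{\ii}\|_{L^p(D)}$ through the energy of $w_j$ on $P\times P$ using the Maz'ya--Shaposhnikova inequality \eqref{e:MS-inequality} applied to a localized version. Failing that, I would accept an $m$-dependent constant absorbed by the explicit $m^{-s_j}$ factor from the tail integral.
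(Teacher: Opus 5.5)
Your architecture matches the paper's. You average over the disjoint annuli to select $h_j$, use the cut-off joining with Poincar\'e--Wirtinger on the annuli for \eqref{e:uniform-joining-error} and \eqref{e:wj-convergence}, and then fill. For the filling you split $U\times U$ into the blocks $P_j^{\ii}\times P_j^{\ii}$, where $v_j$ has zero energy and $w_j$ gives $\mathcal S_j(E)$, and the set $G_j:=(U\times U)\setminus\bigcup_{\ii}(P_j^{\ii}\times P_j^{\ii})$, where only tails at distance $\geq(m-1)\rho_j$ appear; a H\"older/Minkowski mixing estimate follows. The gap is exactly the step you flag as the main obstacle, and what you assert there is not correct. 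Inequality \eqref{e:uniform-PW-scaled} for the pair $(B_m,B_m\setminus\overline B_1)$ at scale $\rho_j$ does \emph{not} have an $m$-independent constant. By scaling from $(B_1,B_1\setminus\overline B_{1/m})$ its constant is of order $m^{s_jp}$ (test with an affine function), and this cancels exactly the gain $m^{-s_jp}$ from the tail $(m\rho_j)^{-s_jp}$. So your fallback absorbs nothing: it gives $\defectj(w_j-v_j,G_j)\leq c(1-s_j)\mathcal S_j$, hence only $\delta_j=c(1-s_j)^{1/p}$. That still tends to zero at fixed $m$, which is enough for the later applications, but it is not \eqref{e:filling-error-factor} with $c$ independent of $m$. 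Your primary suggestion via \eqref{e:MS-inequality} is not carried out. It also needs $s_jp<n$, whereas in this section $p\in(1,+\infty)$, and it requires absorbing the exterior tail, which is guaranteed only when $1-s_j$ is small.

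The missing observation is that you only need Poincar\'e on a \emph{fixed} pair around the inner ball. Since $m\geq2$, one has $B_{2\rho_j}(\xiij)\subset P_j^{\ii}$ and $w_j=(u_j)_{\Cij}$ on $B_{2\rho_j}(\xiij)\setminus\overline B_{\rho_j}(\xiij)\subset\Cij$. Hence \eqref{e:uniform-PW-scaled} for $(B_2,B_2\setminus\overline B_1)$, scaled by $\rho_j$, gives
\[
 \rho_j^{-s_jp}\int_{D_j^{\ii}}\bigl|w_j-(u_j)_{\Cij}\bigr|^p\,dx
 \leq c(1-s_j)[w_j]^p_{s_j,p;B_{2\rho_j}(\xiij)}
 \leq c\,\defectj(w_j,P_j^{\ii}\times P_j^{\ii}),
 \qquad c=c(n,p,s_0).
\]
Combined with your tail bound (using $(m-1)\rho_j\geq m\rho_j/2$), this yields $\defectj(w_j-v_j,G_j)\leq c(1-s_j)m^{-s_jp}\mathcal S_j$. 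The mixing estimate then gives \eqref{e:filling-difference} with $\delta_{j,m}=c(1-s_j)^{1/p}m^{-s_j}$. The same bound shows $w_j-v_j\in W^{s_j,p}(U)$, hence $v_j\in W^{s_j,p}(U)$, which you did not address.

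Two smaller corrections. First, $v_j$ is constant on all of $P_j^{\ii}$, so it has no ``cross terms'' inside $P_j^{\ii}\times P_j^{\ii}$. Second, when $y$ lies in another ball $D_j^{\kk}$, your splitting $w_j(x)-w_j(y)=(w_j(x)-a_{\ii})+(a_{\ii}-v_j(y))$ fails, because $w_j(y)\neq v_j(y)$ there. Write instead $w_j(x)-w_j(y)=\bigl[(w_j-v_j)(x)-(w_j-v_j)(y)\bigr]+\bigl[v_j(x)-v_j(y)\bigr]$ and bound the first bracket by the two one-point terms.
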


\begin{proof}
Given $m,N\in\N$, for every $j\in\N$ and
$h\in\{1,\ldots,N\}$ fixed, let
\begin{equation}\label{e:Ah}
 A_j^{\ii,h}:=\Bijh\setminus\overline B_j^{\ii,h+1},
 \qquad
 A_j^h:=\bigcup_{\ii\in\Ieps}A_j^{\ii,h},
 \qquad
 z_j^{\ii,h}:=(u_j)_{\Cijh}.
\end{equation}
Put $\rho_{j,h}:=m^{-3h}\rj$.  For every $\ii\in\Ieps$, choose
$\varphi_j^{\ii,h}\in W^{1,\infty}(\Rn)$ such that
\begin{equation}\label{e:joining-cut-off}
 \begin{gathered}
  0\leq\varphi_j^{\ii,h}\leq1,
  \qquad
  \varphi_j^{\ii,h}=1\quad\text{on }\Cijh,
  \qquad
  \varphi_j^{\ii,h}=0\quad\text{on }
  \Rn\setminus A_j^{\ii,h},\\
  \operatorname{Lip}(\varphi_j^{\ii,h})
  \leq\frac{2m^2}{\rho_{j,h}}.
 \end{gathered}
\end{equation}
Such a cut-off exists because $\Cijh$ is separated from both
components of $\partial A_j^{\ii,h}$ by a distance bounded from
below by $\rho_{j,h}/(2m^2)$.  Since the sets
$A_j^{\ii,h}$ are mutually disjoint, the joining modification is
given explicitly by
\begin{align}
 w_j^h(x)
 &:={\sum}_{\ii\in\Ieps}\varphi_j^{\ii,h}(x)z_j^{\ii,h}
 +\left(1-{\sum}_{\ii\in\Ieps}\varphi_j^{\ii,h}(x)\right)u_j(x)
 \notag\\
 &=u_j(x)+{\sum}_{\ii\in\Ieps}\varphi_j^{\ii,h}(x)
 \bigl(z_j^{\ii,h}-u_j(x)\bigr).
 \label{e:wjh-definition}
\end{align}
In particular, $w_j^h=z_j^{\ii,h}$ on $\Cijh$ and
$w_j^h=u_j$ on $U\setminus A_j^h$.

Apply \cite[Lemma~3.8]{FocardiAperiodic} with
$(A')_{\ii}^h=\Cijh$, $A_{\ii}^h$ as in \eqref{e:Ah}, and
$z_{\ii}=z_j^{\ii,h}$.  The radius $\rho$ used in that lemma is
$\rho_{j,h}=m^{-3h}\rj$, and its resulting function is precisely
\eqref{e:wjh-definition}.  The normalized form of
\cite[Lemma~3.8, formula~(3.8)]{FocardiAperiodic} gives, for every measurable set
$E\subseteq U\times U$,
\begin{multline}\label{e:technical-uniform}
 \left|\defectj(u_j,E)-\defectj(w_j^h,E)\right|\\
 \leq c\defectj(u_j,U\times A_j^h)
 +cm^{2p}\left(\frac{m^{3h}}{\rj}\right)^{s_jp}
 \sum_{\ii\in\Ieps}
 \int_{A_j^{\ii,h}}|u_j-z_j^{\ii,h}|^p\,dx,
\end{multline}
where $c=c(n,p,s_0)>0$ is independent of $j$ and $h$.

Since the constant in \cite[Lemma~3.8]{FocardiAperiodic} is stated
there for fixed $s$, we verify that its normalized version is
uniform as $s\uparrow1$.  Fix $j$ and $h$, and abbreviate
\[
 s:=s_j,\qquad \rho:=\rho_{j,h},\qquad
 A_{\ii}:=A_j^{\ii,h},\qquad z_{\ii}:=z_j^{\ii,h},
 \qquad \varphi_{\ii}:=\varphi_j^{\ii,h}.
\]
By \eqref{e:joining-cut-off}, $0\leq\varphi_{\ii}\leq1$,
$\operatorname{Lip}(\varphi_{\ii})\leq2m^2/\rho$, and the supports
of the cut-offs are contained in the mutually disjoint sets
$A_{\ii}$.  In the proof of \cite[Lemma~3.8]{FocardiAperiodic}, the
dependence on $s$ occurs only in the singular-kernel estimates in
formulas (3.11), (3.14)--(3.15), and (3.17)--(3.18); the constants
coming from the elementary inequalities for the $p$-power are
independent of $s$.

We first consider the interactions inside a single $A_{\ii}$.  The
term produced by the cut-off is bounded, after multiplication by
$1-s$, by
\begin{align}
 &(1-s)\left(\frac{2m^2}{\rho}\right)^p
 \sum_{\ii\in\Ieps}\int_{A_{\ii}}|u_j(y)-z_{\ii}|^p
 \left(\int_{A_{\ii}}
 \frac{dx}{|x-y|^{n-(1-s)p}}\right)dy.                 \label{e:single-annulus-cut-off}
\end{align}
Here $\nu=n-(1-s)p$ belongs to $(0,n)$ by the choice of $s_0$.
Since $\Ln(A_{\ii})\leq c_n\rho^n$, item~(i) of
Lemma~\ref{l:singular-kernel-bounds} gives
\begin{equation}\label{e:uniform-near-kernel}
 (1-s)\int_{A_{\ii}}
 \frac{dx}{|x-y|^{n-(1-s)p}}
 \leq c(n,p)\rho^{p(1-s)}.
\end{equation}
Indeed, the factor $(n-\nu)^{-1}=(p(1-s))^{-1}$ is exactly
cancelled by the normalization $1-s$.  Consequently,
\eqref{e:single-annulus-cut-off} is at most
\begin{equation}\label{e:single-annulus-uniform}
 c(n,p)m^{2p}\rho^{-sp}
 \sum_{\ii\in\Ieps}\int_{A_{\ii}}|u_j-z_{\ii}|^p\,dx.
\end{equation}
This is the uniform version of formula (3.11) in the cited proof.

We next consider the interactions between distinct annuli.  The
argument leading to \cite[formula~(3.14)]{FocardiAperiodic} uses
only the separation of the centres.  More precisely, the packing
estimate
\[
 \#\bigl\{\kk\in\Ieps:\ell\rj<|x_j^{\ii}-x_j^{\kk}|
 \leq(\ell+1)\rj\bigr\}\leq c_n\ell^{n-1}
\]
holds for every integer $\ell\geq1$ and every finite $2\rj$-separated
family; no relative-density assumption is needed. The shells with $\ell\geq1$ then contain all the other centres.

Hence, for $x\in A_{\ii}$,
\begin{equation}\label{e:distinct-annuli-uniform}
 \begin{aligned}
 \sum_{\kk\ne\ii}\int_{A_{\kk}}
 \frac{dy}{|x-y|^{n+sp}}
 &\leq c(n,p)\frac{\rho^n}{\rj^{n+sp}}
 \sum_{\ell=1}^{+\infty}\ell^{-1-sp}\\
 &\leq c(n,p)\rho^{-sp}\sum_{\ell=1}^{+\infty}
 \ell^{-1-sp}
 \leq c(n,p,s_0)\rho^{-sp}.
 \end{aligned}
\end{equation}
In the second inequality we used $\rho=m^{-3h}\rj\leq\rj$.
Multiplication by $1-s\leq1$ preserves this bound.  Thus the
constant in formulas (3.14)--(3.15) is uniform for
$s\in[s_0,1)$.

It remains to check the locality-defect term, corresponding to
formulas (3.16)--(3.19) of the cited proof.  For
$|x-y|<\rho$, the Lipschitz estimate for $\varphi_{\ii}$ and
item~(i) of Lemma~\ref{l:singular-kernel-bounds} give, including the
radial constant,
\begin{equation}\label{e:near-diagonal}
 (1-s)\rho^{-p}
 \int_{B_\rho(x)}\frac{dy}{|x-y|^{n-(1-s)p}}
 =\frac{n\omega_n}{p}\rho^{-sp}
\end{equation}
(this fact is already explicit in formula (3.17) of \cite{FocardiAperiodic}).
For $|x-y|\geq\rho$, we use $|\varphi_{\ii}(x)-
\varphi_{\ii}(y)|\leq1$ and item~(ii) of the same lemma to obtain
\begin{equation}\label{e:far-field}
 (1-s)\int_{\Rn\setminus B_\rho(x)}
 \frac{dy}{|x-y|^{n+sp}}
 =\frac{n\omega_n(1-s)}{sp}\rho^{-sp}
 \leq\frac{n\omega_n}{s_0p}\rho^{-sp}.
\end{equation}
Therefore the normalized versions of (3.17) and (3.18), and hence
of (3.19), are bounded by a constant of the form
$c(n,p,s_0)m^{2p}$.

Finally, the passage from $E=U\times U$ to an arbitrary measurable
$E\subseteq U\times U$ uses only that $w_j^h=u_j$ outside $A_j^h$
and the last algebraic decomposition in the proof of
\cite[Lemma~3.8]{FocardiAperiodic}; it introduces no further
$s$-dependent constant.  Combining
\eqref{e:single-annulus-uniform},
\eqref{e:distinct-annuli-uniform},
\eqref{e:near-diagonal}, and \eqref{e:far-field} proves that the
constant in \eqref{e:technical-uniform} may indeed be chosen as
$c=c(n,p,s_0)$, uniformly in $j$ and $h$.

The uniform scaled Poincar\'e--Wirtinger inequality gives
\begin{equation}\label{e:uniform-PW}
 \sum_{\ii\in\Ieps}
 \int_{A_j^{\ii,h}}|u_j-z_j^{\ii,h}|^p\,dx
 \leq c_m(m^{-3h}\rj)^{s_jp}
 \defectj(u_j,A_j^h\times A_j^h).
\end{equation}
Therefore, \eqref{e:technical-uniform} and
\eqref{e:uniform-PW} yield
\begin{equation}\label{e:joining-local}
 \left|\defectj(u_j,E)-\defectj(w_j^h,E)\right|
 \leq c_m\defectj(u_j,U\times A_j^h).
\end{equation}

By summing up and averaging on $h$, being the $A_j^h$'s disjoint, we
find $h_j\in\{1,\ldots,N\}$ such that
\begin{equation}\label{e:averaging}
 \defectj(u_j,U\times A_j^{h_j})
 \leq\frac1N\defectj
 \left(u_j,U\times\bigcup_{h=1}^NA_j^h\right)
 \leq\frac1N\defectj(u_j,U\times U).
\end{equation}
Set $w_j:=w_j^{h_j}$.  Then \eqref{e:equal1} and
\eqref{e:equal2} are satisfied by construction, and
\eqref{e:joining-local}--\eqref{e:averaging} imply
\eqref{e:uniform-joining-error}.

To prove that $(w_j)_{j\in\N}$ converges to $u$ in $L^p(U)$, we use
again \eqref{e:uniform-PW}, with radius $m^{-3h_j}\rj$, and the very
definition of $w_j$ as a convex combination of $u_j$ and the mean
value $(u_j)_{\Cij}$ on
$\Bijhj\setminus\overline B_j^{\ii,h_j+1}$.  We obtain
\begin{align}
 \|u_j-w_j\|_{L^p(U)}^p
 &=\sum_{\ii\in\Ieps}
 \|u_j-w_j\|_{L^p(\Bijhj\setminus B_j^{\ii,h_j+1})}^p\notag\\
 &\leq\sum_{\ii\in\Ieps}
 \|u_j-(u_j)_{\Cij}\|_{L^p
 (\Bijhj\setminus B_j^{\ii,h_j+1})}^p\notag\\
 &\leq c_m\rj^{s_jp}\defectj(u_j,U\times U).
 \label{e:wj-Lp}
\end{align}
Since $\rj\to0$, $s_j\geq s_0$, and \eqref{e:energy-bound} holds,
the right-hand side is infinitesimal.  Thus
\eqref{e:wj-convergence} follows from $u_j\to u$ in $L^p(U)$.

We now fill the balls $D_j^{\ii}$ and check all the stated properties of $v_j$.  
Since $w_j=(u_j)_{\Cij}$ on
$\Cij=P_j^{\ii}\setminus\overline{D_j^{\ii}}$, the function $v_j$
is constant on the whole ball $P_j^{\ii}$.  Furthermore, by the
construction of $w_j$ and the uniform Poincar\'e--Wirtinger
inequality,
\begin{align}
 \|v_j-w_j\|_{L^p(U)}^p
 &=\sum_{\ii\in\Ieps}
 \int_{D_j^{\ii}}|w_j-(u_j)_{\Cij}|^p\,dx\notag\\
 &\leq c_m\rj^{s_jp}\defectj(u_j,U\times U).
 \label{e:vj-Lp}
\end{align}
Thus \eqref{e:vj-convergence} follows from
\eqref{e:wj-convergence}.  

We now prove the energy estimates directly
by applying Poincar\'e--Wirtinger on a fixed annulus.  This also
establishes that $v_j\in W^{s_j,p}(U)$, without a separate
geometric comparison of the interactions of the filled balls.

Set 
\begin{equation}\label{e:djGj}
 G_j:=(U\times U)\setminus
       \bigcup_{\ii\in\Ieps}(P_j^{\ii}\times P_j^{\ii}).
\end{equation}
Since $m\geq2$, the ball $B_{2\rho_j}(\xiij)$ is contained in
$P_j^{\ii}$, and $w_j=(u_j)_{\Cij}$ on
$B_{2\rho_j}(\xiij)\setminus\overline B_{\rho_j}(\xiij)$.
Applying \eqref{e:uniform-PW-scaled} to the pair
$(B_2,B_2\setminus\overline B_1)$ and scaling by $\rho_j$ gives
\begin{align}
 \rho_j^{-s_jp}\int_{D_j^{\ii}}
       |w_j-(u_j)_{\Cij}|^p\,dx
 &\leq c(1-s_j)[w_j]^p_{s_j,p;B_{2\rho_j}(\xiij)}\notag\\
 &\leq c\defectj(w_j,P_j^{\ii}\times P_j^{\ii}).
 \label{e:Lp-internal}
\end{align}
Here $c=c(n,p,s_0)$ is independent of $m,j,N$ and $\rho_j$.

For $x\in D_j^{\ii}$, the distance to $U\setminus P_j^{\ii}$
is at least $(m-1)\rho_j\geq m\rho_j/2$.  Thus
\begin{equation}\label{e:tail}
 \int_{U\setminus P_j^{\ii}}
       \frac{dy}{|x-y|^{n+s_jp}}
 \leq c(m\rho_j)^{-s_jp}.
\end{equation}
Note that $w_j=v_j$ outside the inner balls.  Using
$|a-b|^p\leq2^{p-1}(|a|^p+|b|^p)$ for interactions between
different inner balls, \eqref{e:Lp-internal} and \eqref{e:tail}
therefore imply
\begin{align}
 \defectj(w_j-v_j,G_j)
 &\leq c(1-s_j)(m\rho_j)^{-s_jp}
      \sum_{\ii\in\Ieps}\int_{D_j^{\ii}}
            |w_j-(u_j)_{\Cij}|^p\,dx\notag\\
 &\leq c(1-s_j)m^{-s_jp}\mathcal S_j.
 \label{e:dj-energy}
\end{align}
On each $P_j^{\ii}\times P_j^{\ii}$, the increments of $w_j-v_j$
equal those of $w_j$, because $v_j$ is constant on $P_j^{\ii}$.
Consequently
\[
 \defectj\Big(w_j-v_j,\bigcup_{\ii\in\Ieps}
               (P_j^{\ii}\times P_j^{\ii})\Big)=\mathcal S_j.
\]
Together with \eqref{e:dj-energy} and $w_j-v_j\in L^p(U)$, this
proves $w_j-v_j\in W^{s_j,p}(U)$ and hence
$v_j=w_j-(w_j-v_j)\in W^{s_j,p}(U)$.

It remains to compare the two energies on $G_j$.  The inequality
\[
 \bigl||a+b|^p-|a|^p\bigr|
 \leq c(p)\bigl(|a|^{p-1}|b|+|b|^p\bigr)
\]
and H\"older's inequality give, for every measurable
$E\subseteq U\times U$,
\begin{align}
 &\left|\defectj(w_j,E\cap G_j)
             -\defectj(v_j,E\cap G_j)\right|\notag\\
 &\quad\leq c\defectj(v_j,U\times U)^{(p-1)/p}
                  \defectj(w_j-v_j,G_j)^{1/p}
                  +c\defectj(w_j-v_j,G_j)\notag\\
 &\quad\leq c(1-s_j)^{1/p}m^{-s_j}
          \defectj(v_j,U\times U)^{(p-1)/p}\mathcal S_j^{1/p}
          +c(1-s_j)m^{-s_jp}\mathcal S_j\notag\\
 &\quad\leq c(1-s_j)^{1/p}m^{-s_j}
           \bigl(\defectj(v_j,U\times U)+\mathcal S_j\bigr).
 \label{e:external-difference}
\end{align}
The last step follows from Young's inequality.  On the internal
products $P_j^{\ii}\times P_j^{\ii}$ the energy of $v_j$ is zero,
and that of $w_j$ gives exactly $\mathcal S_j(E)$.  Adding these
contributions proves \eqref{e:filling-difference}; taking
$E=U\times U$ gives \eqref{e:filling-lower} and
\eqref{e:filling-upper}.

Finally, the functions $w_j$ are constructed as convex combinations
of $u_j$ and its mean values on the annuli $\Cij$.  This proves the
first inequality in \eqref{e:Linfty}; the second follows immediately
from the definition of $v_j$.
\end{proof}
\begin{remark}
The additional factor $1-s_j$ in \eqref{e:dj-energy} is the
gain specific to the present regime.  It comes from combining
the normalized tail with the uniform Poincar\'e estimate on the
annulus.  Thus $m$ could be fixed throughout
the applications.  We leave it explicit to retain the comparison
with the fixed-order arguments of \cite{FocardiAperiodic}.
\end{remark}
We introduce next the notation for Delone sets used in
\cite[Definition~2.2 and Propositions~2.4--2.5]{FocardiAperiodic}.
A set $\Lambda\subset\Rn$ is a Delone set if
\begin{equation}\label{e:Delone-definition-new}
 r_\Lambda:=\frac12\inf_{\substack{x,y\in\Lambda\\x\neq y}}|x-y|>0,
 \qquad
 R_\Lambda:=\sup_{x\in\Rn}\operatorname{dist}(x,\Lambda)<+\infty.
\end{equation}
Here $\operatorname{dist}(x,\Lambda):=\inf_{y\in\Lambda}|x-y|$.
Such a set is countably infinite; we write
$\Lambda=\{x^{\ii}\}_{\ii\in\mathbb Z^n}$.  The Vorono\"i cell
of $x^{\ii}$ is
\begin{equation}\label{e:Voronoi-definition}
 V^{\ii}:=\{x\in\Rn:|x-x^{\ii}|\leq|x-x^{\kk}|
                 \text{ for every }\kk\in\mathbb Z^n\}.
\end{equation}
The cells are convex polytopes with pairwise disjoint interiors,
cover $\Rn$, and satisfy
\begin{equation}\label{e:Voronoi-radii-new}
 B_{r_\Lambda}(x^{\ii})\subseteq V^{\ii}
 \subseteq\overline B_{R_\Lambda}(x^{\ii}).
\end{equation}
In particular, their boundaries have zero Lebesgue measure.
For a bounded open set $A\subset\Rn$ set
\begin{equation}\label{e:interior-boundary-indices-new}
 \begin{aligned}
 \mathcal I_\Lambda(A)&:=\{\ii\in\mathbb Z^n:V^{\ii}\subseteq A\},\\
 \mathscr I_\Lambda(A)&:=\{\ii\in\mathbb Z^n:
                              V^{\ii}\cap\partial A\neq\varnothing\}.
 \end{aligned}
\end{equation}
Both index sets are finite.  

Given a sequence of Delone sets $\Lambda_j=\{x_j^{\ii}\}_{\ii\in\mathbb Z^n}$
denote its Vorono\"i cells by
$V_j^{\ii}$ and write $\mathcal I_j(A):=\mathcal I_{\Lambda_j}(A)$, 
$\mathscr I_j(A):=\mathscr I_{\Lambda_j}(A)$, 
and $R_j:=R_{\Lambda_j}$.

In the following result we will use that the constant in the Poincar\'e--Wirtinger inequality is 
uniform over all closed convex sets $K$ satisfying $B_1\subset K\subset B_M$, for some $M>1$.
This is the content of Proposition~\ref{p:PW Voronoi} in the appendix.
\begin{corollary}\label{c:Delone-joining}
Let $U$, $p$, $s_j$, and $u_j$ be as in
Lemma~\ref{l:uniform-joining}.  Let $(\Lambda_j)_j$ be a sequence
of Delone sets and let $0<\eps_j\leq r_{\Lambda_j}$ satisfy
\begin{equation}\label{e:Delone-radii}
 \eps_j\longrightarrow0,
 \qquad \sup_j\frac{R_j}{\eps_j}<+\infty.
\end{equation}
Set $\Ieps:=\mathcal I_{\Lambda_j}(U)$ and fix integers
$m\geq2$, $N\geq1$.  Then Lemma~\ref{l:uniform-joining} applies
to the family $\{x_j^{\ii}\}_{\ii\in\Ieps}$.  If $h_j$ is the
sequence of indices given by that lemma, the functions
\begin{equation}\label{e:zeta}
 \zeta_j:=\sum_{\ii\in\Ieps}(u_j)_{\Cij}\chi_{\Vij}
\end{equation}
converge to $u$ strongly in $L^p(U)$.  
\end{corollary}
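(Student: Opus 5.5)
First we check the hypotheses of Lemma~\ref{l:uniform-joining}. Since $\eps_j\leq r_{\Lambda_j}$, distinct centres satisfy $|x_j^{\ii}-x_j^{\kk}|\geq2r_{\Lambda_j}\geq2\eps_j$. For $\ii\in\Ieps$ we have $V_j^{\ii}\subseteq U$, and by \eqref{e:Voronoi-radii-new} also $B_{\eps_j}(x_j^{\ii})\subseteq B_{r_{\Lambda_j}}(x_j^{\ii})\subseteq V_j^{\ii}\subseteq U$. Hence \eqref{e:separation} holds with $\rj=\eps_j\to0$, and the lemma yields $h_j$, $w_j$ and $v_j$ with $w_j\to u$ in $L^p(U)$ by \eqref{e:wj-convergence}. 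It therefore suffices to show that $\|\zeta_j-w_j\|_{L^p(U)}\to0$. We split $U$ into the cells $V_j^{\ii}$ with $\ii\in\Ieps$ and the remainder $U_j:=U\setminus\bigcup_{\ii\in\Ieps}V_j^{\ii}$; on $U_j$ we have $\zeta_j=0$.

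\emph{The boundary layer.} Each cell has diameter at most $2R_j\leq c\eps_j$. Every point of $U_j$ lies in some cell $V_j^{\kk}\not\subseteq U$, and that cell meets $\partial U$ (or lies outside $\overline U$), so $U_j\subseteq\{x\in U:\operatorname{dist}(x,\partial U)\leq 2R_j\}$. Thus $\Ln(U_j)\to0$. Since $w_j\to u$ in $L^p(U)$, the family $|w_j|^p$ is equi-integrable, so $\int_{U_j}|w_j|^p\,dx\to0$. If $\partial U$ is not regular enough to force the layer measure to vanish, we use instead that $\Ln(U_j)\to0$ follows from $\bigcap_j\{\operatorname{dist}(\cdot,\partial U)\leq 2R_j\}\cap U=\varnothing$ together with dominated convergence.

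\emph{Inside a cell.} On $V_j^{\ii}$ we write
\[
\|w_j-(u_j)_{\Cij}\|_{L^p(V_j^{\ii})}^p\leq 2^{p-1}\Bigl(\|w_j-u_j\|^p_{L^p(V_j^{\ii})}+\|u_j-(u_j)_{\Cij}\|^p_{L^p(V_j^{\ii})}\Bigr).
\]
Summing the first term gives $\|w_j-u_j\|^p_{L^p(U)}\to0$ by \eqref{e:wj-Lp}. For the second term we rescale $V_j^{\ii}$ by $\eps_j$ around $x_j^{\ii}$. This produces a convex set $K$ with $B_1\subseteq K\subseteq B_M$, where $M:=\sup_jR_j/\eps_j$. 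Proposition~\ref{p:PW Voronoi} gives a uniform constant, and combined with the scaling in \eqref{e:uniform-PW-scaled} it yields
\[
\|u_j-(u_j)_{V_j^{\ii}}\|^p_{L^p(V_j^{\ii})}\leq c\,\eps_j^{s_jp}\defectj(u_j,V_j^{\ii}\times V_j^{\ii}).
\]
Next we must replace the cell mean by the mean over the small annulus $\Cij$, whose measure is comparable to $(m^{-3h_j}\eps_j)^n\geq (m^{-3N-2}\eps_j)^n$. By Jensen,
\[
|(u_j)_{V_j^{\ii}}-(u_j)_{\Cij}|^p\Ln(V_j^{\ii})\leq \frac{\Ln(V_j^{\ii})}{\Ln(\Cij)}\,\|u_j-(u_j)_{V_j^{\ii}}\|^p_{L^p(V_j^{\ii})}.
\]
The volume ratio is bounded by a constant $c(n,m,N,M)$, since $m$ and $N$ are fixed. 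Summing over $\ii$ and using that the cells are disjoint gives a total bound of $c(m,N)\eps_j^{s_0p}\defectj(u_j,U\times U)\to0$, by \eqref{e:energy-bound}.

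The main obstacle is this uniform Poincar\'e step on cells of varying shape. It is exactly why Proposition~\ref{p:PW Voronoi} is needed, and it is also where the Delone bound $\sup_j R_j/\eps_j<\infty$ enters. Everything else is bookkeeping with the estimates already established in Lemma~\ref{l:uniform-joining}.
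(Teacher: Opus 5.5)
Your proof is correct and follows the paper's argument: separation from $\eps_j\le r_{\Lambda_j}$, the uniform Poincar\'e--Wirtinger inequality of Proposition~\ref{p:PW Voronoi} on the rescaled Vorono\"i cells, Jensen's inequality to pass from the cell mean to the annular mean with a volume ratio bounded by $c(n,m,N,M)$, and equi-integrability on the boundary layer of width $2R_j$. The only differences are minor. You compare $\zeta_j$ with $w_j$, which costs the extra term \eqref{e:wj-Lp}, whereas the paper compares directly with $u_j$ and is slightly shorter. Also, $M$ should be taken strictly larger than $\sup_jR_j/\eps_j$, so that the rescaled cells lie in the open ball $B_M$.
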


\begin{proof}
By \eqref{e:Voronoi-radii-new} and the definition of $\Ieps$, the
points $x_j^{\ii}$ satisfy \eqref{e:separation}.  It remains to prove
the convergence of $(\zeta_j)_j$.  Fix
$M>\sup_jR_j/\eps_j$ and set
\[
 K_j^{\ii}:=\eps_j^{-1}(V_j^{\ii}-x_j^{\ii}).
\]
These sets are convex and satisfy $B_1\subset K_j^{\ii}\subset B_M$.
Applying inequality \eqref{e:PW Voronoi} to $K_j^{\ii}$ and
$v(x)=u_j(x_j^{\ii}+\eps_jx)$, and scaling back, we obtain
\begin{equation}\label{e:cell-PW-Delone}
 \|u_j-(u_j)_{\Vij}\|_{L^p(\Vij)}^p
 \leq c \eps_j^{s_jp}\defectj(u_j,\Vij\times\Vij),
\end{equation}
where $c=c(n,p,s_0,M)$ is independent of $j$ and $\ii$.
Since $\Cij\subset\Vij$ and
$h_j\leq N$, we also have
$\Ln(\Vij)/\Ln(\Cij)\leq c(n,m,M,N)$.  Jensen's inequality thus yields
\begin{align*}
 \|u_j-(u_j)_{\Cij}\|_{L^p(\Vij)}^p
 &\leq 2^{p-1}\left(1+\frac{\Ln(\Vij)}{\Ln(\Cij)}\right)
                \|u_j-(u_j)_{\Vij}\|_{L^p(\Vij)}^p\\
 &\leq c \eps_j^{s_jp}\defectj(u_j,\Vij\times\Vij),
\end{align*}
with $c=c(n,p,s_0,m,M,N)$.  Summing over $\ii\in\Ieps$ gives
\begin{equation}\label{e:zeta-estimate}
 \sum_{\ii\in\Ieps}
 \|u_j-(u_j)_{\Cij}\|_{L^p(\Vij)}^p
 \leq c \eps_j^{s_jp}\defectj(u_j,U\times U).
\end{equation}
Set $E_j:=U\setminus\bigcup_{\ii\in\Ieps}\Vij$.  If $x\in E_j$,
any Vorono\"i cell containing $x$ also contains a point outside $U$. 
Since its diameter is at most $2R_j$,
\[
 E_j\subseteq\{x\in U:\operatorname{dist}(x,\partial U)\leq2R_j\}.
\]
As $R_j\to0$ and $U$ is bounded and open, $\Ln(E_j)\to0$.
By the definition of $\zeta_j$,
\begin{equation}\label{e:zeta-difference}
 \|\zeta_j-u_j\|_{L^p(U)}^p
 =\sum_{\ii\in\Ieps}
 \|u_j-(u_j)_{\Cij}\|_{L^p(\Vij)}^p
 +\|u_j\|_{L^p(E_j)}^p.
\end{equation}
The first term tends to zero by \eqref{e:zeta-estimate} and
\eqref{e:energy-bound}; the second does so by the equi-integrability
of $(|u_j|^p)_j$ due to the strong convergence $u_j\to u$ in $L^p(U)$. The conclusion then follows.
\end{proof}

\begin{remark}[Separated families]\label{r:random}
Lemma~\ref{l:uniform-joining} requires only
\eqref{e:separation}.  Relative density is used
in Corollary~\ref{c:Delone-joining} to prove the convergence of the
cellwise averages \eqref{e:zeta}.
\end{remark}

\begin{corollary}\label{c:combination}
Under the assumptions and with the notation of
Lemma~\ref{l:uniform-joining}, let $c_m$ be the joining constant
and let $\delta_{j,m}$ be as in \eqref{e:filling-error-factor}.
For all sufficiently large $j$,
\begin{equation}\label{e:combined-lower}
 \defectj(u_j,U\times U)
 \geq\frac{1-\delta_{j,m}}{1+c_m/N}
       \bigl(\defectj(v_j,U\times U)+\mathcal S_j\bigr).
\end{equation}
If $N>c_m$, then also
\begin{equation}\label{e:combined-upper}
 \defectj(u_j,U\times U)
 \leq\frac{1+\delta_{j,m}}{1-c_m/N}
       \bigl(\defectj(v_j,U\times U)+\mathcal S_j\bigr).
\end{equation}
\end{corollary}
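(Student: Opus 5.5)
The corollary follows by chaining the two-sided estimates of Lemma~\ref{l:uniform-joining}: first compare $u_j$ with $w_j$ through the joining estimate, then compare $w_j$ with the pair $(v_j,\mathcal S_j)$ through the filling estimates.

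\emph{Joining step.} Take $E=U\times U$ in \eqref{e:uniform-joining-error}. This gives
\[
 \bigl(1-\tfrac{c_m}{N}\bigr)\defectj(u_j,U\times U)
 \leq\defectj(w_j,U\times U)
 \leq\bigl(1+\tfrac{c_m}{N}\bigr)\defectj(u_j,U\times U).
\]

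\emph{Filling step.} We know $\delta_{j,m}\to0$ for fixed $m$, so $\delta_{j,m}<1$ for all sufficiently large $j$, and all the factors below are positive. This is the only place where "sufficiently large $j$" is used.

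\emph{Lower bound.} Combine the right-hand joining inequality with \eqref{e:filling-lower}:
\[
 (1-\delta_{j,m})\bigl(\defectj(v_j,U\times U)+\mathcal S_j\bigr)
 \leq\defectj(w_j,U\times U)
 \leq\bigl(1+\tfrac{c_m}{N}\bigr)\defectj(u_j,U\times U).
\]
Dividing by $1+c_m/N>0$ yields \eqref{e:combined-lower}. No condition on $N$ is needed here.

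\emph{Upper bound.} Combine the left-hand joining inequality with \eqref{e:filling-upper}:
\[
 \bigl(1-\tfrac{c_m}{N}\bigr)\defectj(u_j,U\times U)
 \leq\defectj(w_j,U\times U)
 \leq(1+\delta_{j,m})\bigl(\defectj(v_j,U\times U)+\mathcal S_j\bigr).
\]
The assumption $N>c_m$ makes $1-c_m/N$ strictly positive, so we may divide by it and obtain \eqref{e:combined-upper}.

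\emph{Finiteness.} All quantities involved are finite. Indeed, $\defectj(u_j,U\times U)$ is finite by \eqref{e:energy-bound}. Moreover, $v_j$ and $w_j$ belong to $W^{s_j,p}(U)$, so $\defectj(v_j,U\times U)$ and $\mathcal S_j\le\defectj(w_j,U\times U)$ are finite as well. Hence all the divisions above are legitimate.

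There is no genuine obstacle in this argument. The only points requiring care are the sign conditions $\delta_{j,m}<1$ and $N>c_m$.
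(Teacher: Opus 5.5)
Your proposal is correct and follows the paper's proof exactly: you specialize \eqref{e:uniform-joining-error} to $E=U\times U$ to compare $u_j$ with $w_j$ on both sides, then chain this with \eqref{e:filling-lower} and \eqref{e:filling-upper}, dividing by $1+c_m/N$ and, when $N>c_m$, by $1-c_m/N$. One minor remark: the condition $\delta_{j,m}<1$ is not needed for either division, since only the positivity of $1\pm c_m/N$ matters, so ``sufficiently large $j$'' serves only to make the lower bound nontrivial.
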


\begin{proof}
Taking $E=U\times U$ in \eqref{e:uniform-joining-error} gives
\[
 \left(1-\frac{c_m}{N}\right)\defectj(u_j,U\times U)
 \leq\defectj(w_j,U\times U)
 \leq\left(1+\frac{c_m}{N}\right)\defectj(u_j,U\times U).
\]
Combining these inequalities with
\eqref{e:filling-lower}--\eqref{e:filling-upper} proves the claim.
For each fixed $m$, the filling factors tend to one as $j\to\infty$;
only the joining error requires the subsequent limit $N\to\infty$.
\end{proof}

\section{Aperiodic obstacle homogenization in the BBM regime}
\label{s:aperiodic-BBM}

Let $U\subset\Rn$ be bounded, connected, and Lipschitz, and let
$\Lambda_j=\{x_j^{\ii}\}_{\ii\in\mathbb Z^n}$ be a sequence of
Delone sets.  We retain the notation
$r_{\Lambda_j}$, $R_j=R_{\Lambda_j}$, $V_j^{\ii}$,
$\mathcal I_j(A)$, and $\mathscr I_j(A)$ introduced above.
Choose $0<\eps_j\leq r_{\Lambda_j}$ and assume
\begin{equation}\label{e:Delone-scaling-new}
 \eps_j\longrightarrow0,
 \qquad
 \sup_j\frac{R_j}{\eps_j}<+\infty,
 \qquad
 \#\mathcal I_j(U)\,\eps_j^n\longrightarrow\theta\in(0,+\infty).
\end{equation}
Finally, suppose that the empirical measures of the interior centres
satisfy
\begin{equation}\label{e:empirical-measures-new}
 \mu_j:=\frac1{\#\mathcal I_j(U)}
 \sum_{\ii\in\mathcal I_j(U)}\delta_{x_j^{\ii}}
 \stackrel{*}{\rightharpoonup}
 \beta\,\Ln\!\llcorner U
 \quad\text{in }(C_b(U))^*,
\end{equation}
where $\beta\in L^\infty(U)$, $\beta>0$ a.e., and
$\int_U\beta\,dx=1$.  Conditions
\eqref{e:Delone-scaling-new}--\eqref{e:empirical-measures-new} are
the assumptions in \cite[Section~3.1, formulas~(3.1)--(3.3)]
{FocardiAperiodic}; their
compatibility with the Delone geometry is discussed precisely in
\cite[Remark~3.2]{FocardiAperiodic} where it is shown that 
\eqref{e:empirical-measures-new} is always satisfied up to subsequences
under \eqref{e:Delone-scaling-new}.

Let $s_j\uparrow1$, with $s_j\in[s_0,1)$, and retain $p\in(1,n)$.
Fix a nonempty bounded open set $T\Subset B_1$.  Define the critical radius
and the obstacle set by
\begin{equation}\label{e:critical-obstacle-scale-new}
 \lambda_j:=\eps_j^{\frac{n}{n-s_jp}},
 \qquad
 T_j^{\ii}:=x_j^{\ii}+\lambda_jT,
 \qquad
 \mathcal K_j:=\bigcup_{\ii\in\mathbb Z^n}T_j^{\ii}.
\end{equation}
Notice that
\begin{equation}\label{e:scale-separation-new}
 \lambda_j^{n-s_jp}=\eps_j^n,
 \qquad
 \frac{\lambda_j}{\eps_j}
 =\eps_j^{\frac{s_jp}{n-s_jp}}\longrightarrow0.
\end{equation}
Thus, after discarding finitely many indices, every obstacle is
contained in the corresponding Vorono\"i cell.  The choice
\eqref{e:critical-obstacle-scale-new} is the simultaneous BBM version
of \cite[formula~(3.5)]{FocardiAperiodic}.

Define $\mathcal F_j:L^p(U)\to[0,+\infty]$ by
\begin{equation}\label{e:aperiodic-functionals-new}
 \mathcal F_j(u):=
 \begin{cases}
  (1-s_j)[u]_{s_j,p;U}^p,
  &u\in W^{s_j,p}(U),\quad
   u=0\quad\text{a.e. on }\mathcal K_j\cap U,\\
  +\infty,&\text{otherwise}.
 \end{cases}
\end{equation}
The assumption that $T$ is open allows us to use
Corollary~\ref{c:capacity-convergence}.  The extension to thin
obstacles is discussed in Remarks~\ref{r:capacity-thin-obstacles} and \ref{r:capacity-constraint}.

\begin{theorem}[Aperiodic BBM homogenization]
\label{t:aperiodic-BBM}
Under assumptions \eqref{e:Delone-scaling-new}--
\eqref{e:empirical-measures-new}, the functionals
$(\mathcal F_j)_j$ defined in \eqref{e:aperiodic-functionals-new}
$\Gamma$-converge in $L^p(U)$ to
\begin{equation}\label{e:aperiodic-limit-functional-new}
 \mathcal F(u):=
 \begin{cases}
 K_{n,p}\displaystyle\int_U|\nabla u|^p\,dx
 +\theta K_{n,p}\capp(T)
  \displaystyle\int_U|u|^p\beta\,dx,
 &u\in W^{1,p}(U),\\
 +\infty,&\text{otherwise}.
 \end{cases}
 \end{equation}
\end{theorem}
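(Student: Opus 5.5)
We prove the $\Gamma$-liminf and $\Gamma$-limsup inequalities separately, following the scheme of \cite{FocardiAperiodic} and using Lemma~\ref{l:uniform-joining}, Corollaries~\ref{c:Delone-joining} and \ref{c:combination}, and the capacity limits of Lemma~\ref{l:interior-capacity-BBM}.

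\emph{Lower bound.} Let $u_j\to u$ in $L^p(U)$ with $\sup_j\mathcal F_j(u_j)<+\infty$. By truncation at levels $\pm M$, which is admissible because the constraint $u_j=0$ on $\mathcal K_j$ is preserved and the energy decreases, and by letting $M\to\infty$ at the end, we may assume $\sup_j\|u_j\|_\infty<\infty$.

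We apply Lemma~\ref{l:uniform-joining} with the centres $\{x_j^{\ii}\}_{\ii\in\mathcal I_j(U)}$, the radii $\eps_j$, and fixed $m,N$. This produces $w_j$ and the filled functions $v_j$. By \eqref{e:combined-lower},
\[
\mathcal F_j(u_j)\ge\frac{1-\delta_{j,m}}{1+c_m/N}\bigl(\defectj(v_j,U\times U)+\mathcal S_j\bigr).
\]

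For the bulk term, $v_j\to u$ in $L^p(U)$ by \eqref{e:vj-convergence}, so the BBM liminf inequality \cite[Theorem~8]{PonceGamma} gives
\[
\liminf_j\defectj(v_j,U\times U)\ge K_{n,p}\int_U|\nabla u|^p\,dx
\]
and $u\in W^{1,p}(U)$.

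For the capacitary term, fix $\ii$ and set $z=(u_j)_{\Cij}$. On $P_j^{\ii}$ the function $w_j$ equals $z$ on the annulus $\Cij$ and vanishes on $T_j^{\ii}$. The function $\varphi:=(z-w_j)/z$, restricted to $D_j^{\ii}$ and extended by zero, therefore vanishes off $B_{\rho_j}(\xiij)$, equals $1$ on $T_j^{\ii}$, and coincides with $(z-w_j)/z$ on $P_j^{\ii}=B_{m\rho_j}$. Hence it is admissible for the interior capacity $\operatorname{cap}_{s_j,p}(T_j^{\ii},B_{m\rho_j};\rho_j)$, and
\[
\defectj(w_j,P_j^{\ii}\times P_j^{\ii})\ge(1-s_j)|z|^p\operatorname{cap}_{s_j,p}(T_j^{\ii},B_{m\rho_j};\rho_j).
\]
By scaling by $\lambda_j$ and using $\lambda_j^{n-s_jp}=\eps_j^n$, this equals
\[
|z|^p\eps_j^n(1-s_j)\operatorname{cap}_{s_j,p}(T,B_{mL_j};L_j),\qquad L_j:=\rho_j/\lambda_j\ge m^{-3N-2}\eps_j/\lambda_j\to\infty.
\]
Lemma~\ref{l:interior-capacity-BBM} then gives $(1-s_j)\operatorname{cap}_{s_j,p}(T,B_{mL_j};L_j)\to K_{n,p}\capp(T)$ uniformly in $\ii$. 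Consequently
\[
\mathcal S_j\ge (K_{n,p}\capp(T)-o(1))\,\#\mathcal I_j(U)\,\eps_j^n\int|\zeta_j|^p\,d\mu_j,
\]
where we identify $\int|\zeta_j|^p\,d\mu_j=\#\mathcal I_j(U)^{-1}\sum_{\ii}|(u_j)_{\Cij}|^p$.

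We then show this sum converges to $\int_U|u|^p\beta\,dx$. This follows as in \cite{FocardiAperiodic}: $\zeta_j\to u$ in $L^p$ by Corollary~\ref{c:Delone-joining}, the functions are bounded, and \eqref{e:empirical-measures-new} gives the convergence of the cell volumes against $\beta$. Concretely, we compare $\sum_{\ii}|z_j^{\ii}|^p\eps_j^n$ with $\int\psi|\zeta_j|^p$-type integrals for $\psi$ continuous, approximating $u$ by continuous functions. Letting $j\to\infty$, then $N\to\infty$, then $M\to\infty$ concludes the lower bound.

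\emph{Upper bound.} By density and the continuity of $\mathcal F$, it suffices to treat $u\in C^\infty_c(\R^n)$ restricted to $U$. Fix $R$ large and let $\varphi_R$ be a near-optimal competitor for $\Capp(T,B_R)$, smooth with compact support in $B_R$. Set
\[
u_j:=u\Bigl(1-\sum_{\ii}\varphi_R\bigl((\cdot-x_j^{\ii})/\lambda_j\bigr)\Bigr),
\]
with the sum running over all cells meeting $U$; the supports are disjoint since $R\lambda_j\ll\eps_j$. Then $u_j=0$ on $\mathcal K_j$ and $u_j\to u$ in $L^p$.

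To estimate the energy, we split $U\times U$ into the diagonal blocks $B_{\eps_j/2}(x_j^{\ii})^2$ and the complement. On each diagonal block, $u$ is nearly constant, equal to $u(x_j^{\ii})$, so by scaling the contribution is $|u(x_j^{\ii})|^p\eps_j^n(1-s_j)[\varphi_R]_{s_j,p}^p+o(\eps_j^n)$, with $(1-s_j)[\varphi_R]^p_{s_j,p}\to K_{n,p}\int|\nabla\varphi_R|^p$. On the complement, and for the interactions of $u$ itself, the tail bounds from the appendix show that the perturbation $u_j-u$ has small $L^p$ norm (of order $\lambda_j^n\eps_j^{-n}$) at distances at least $\eps_j/2$, while $\defectj(u)\to K_{n,p}\int_U|\nabla u|^p$ by BBM. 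The cross terms are handled by the algebraic inequality used in \eqref{e:external-difference}.

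\textbf{Main obstacle.} We expect the key difficulty to be making the cross-interaction estimates uniform as $s_j\uparrow1$ without any relation between $s_j$ and $\eps_j$. The normalized tail beyond $\eps_j$ contributes a factor $(1-s_j)$ times $\eps_j^{-s_jp}\lambda_j^{n}\#$, which vanishes. Summing with the weights $\eps_j^n$ and using \eqref{e:empirical-measures-new} then gives
\[
\limsup_j\mathcal F_j(u_j)\le K_{n,p}\int_U|\nabla u|^p\,dx+\theta K_{n,p}\Capp(T,B_R)\int_U|u|^p\beta\,dx,
\]
and letting $R\to\infty$ yields $\capp(T)$ in place of $\Capp(T,B_R)$.
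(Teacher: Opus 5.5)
Your lower bound is essentially the paper's proof: \eqref{e:combined-lower}, Ponce's liminf inequality for the filled sequence $v_j$, the test function $(z-w_j)/z$ for the interior capacity, and Lemma~\ref{l:interior-capacity-BBM} with $L_j=\rho_j/\lambda_j$. The truncation is unnecessary. Comparing $\eps_j^n\sum_{\ii}|z_j^{\ii}|^p$ with $\eps_j^n\sum_{\ii}|\psi(x_j^{\ii})|^p$ for continuous $\psi$ is a valid substitute for the paper's device $\Psi_j\stackrel{*}{\rightharpoonup}\theta\beta$.

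\textbf{The gap is in the upper bound, at the diagonal blocks.} You assert that $\defectj(u_j,B_{\eps_j/2}(x_j^{\ii})^2)$ equals $|u(x_j^{\ii})|^p\eps_j^n(1-s_j)[\varphi_R]^p_{s_j,p}+o(\eps_j^n)$ because $u$ is nearly constant there. This is false: a small oscillation of $u$ does not make its \emph{normalized energy} on the block negligible. In fact $\defectj(u,B_{\eps_j/2}(x_j^{\ii})^2)$ is of order $\eps_j^{n+p(1-s_j)}|\nabla u(x_j^{\ii})|^p$. These disjoint balls fill a fixed fraction of $U$, so the bulk energy carried by the blocks is of order $\eps_j^{p(1-s_j)}=\exp(-p(1-s_j)|\log\eps_j|)$ times $\int_U|\nabla u|^p\,dx$. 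It vanishes only if $(1-s_j)|\log\eps_j|\to+\infty$; this is exactly the BBM mass below scale $\eps_j$ discussed in Subsection~\ref{ss:heuristics}.

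Suppose instead you meant only the energy of the corrector part $u\phi_j$, with $\phi_j:=\sum_{\ii}\varphi_R((\cdot-x_j^{\ii})/\lambda_j)$. On the blocks, both increments $a:=u(x)-u(y)$ and $b:=(u\phi_j)(x)-(u\phi_j)(y)$ carry energy of order one. The inequality behind \eqref{e:external-difference}, combined with a global H\"older bound, then leaves the error $c\,\defectj(u,U\times U)^{(p-1)/p}\defectj(u\phi_j,U\times U)^{1/p}+c\,\defectj(u\phi_j,U\times U)$. This does not tend to zero and gives only a non-additive bound. So the difficulty you single out, the tail beyond $\eps_j$, is the easy part. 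The real one is separating bulk and capacitary energies at short range.

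\textbf{The missing idea is spatial, rather than radial, separation.} Put $S_j:=\bigcup_{\ii}B_{R\lambda_j}(x_j^{\ii})$ and $E_j:=(S_j\times U)\cup(U\times S_j)$; then $b=0$ off $E_j$. Use $\bigl||a-b|^p-|a|^p-|b|^p\bigr|\leq c(|a|^{p-1}|b|+|a||b|^{p-1})$ and apply H\"older's inequality on $E_j$ only. The cross terms are then bounded by $c\,\defectj(u,E_j)^{(p-1)/p}\defectj(u\phi_j,E_j)^{1/p}+c\,\defectj(u,E_j)^{1/p}\defectj(u\phi_j,E_j)^{(p-1)/p}$. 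Since $(1-s_j)\int_{B_1}|z|^{p-n-s_jp}\,dz=n\omega_n/p$, a bounded Lipschitz $u$ satisfies $\defectj(u,E_j)\leq C\Ln(S_j)\leq C(\lambda_j/\eps_j)^n\to0$ uniformly in $s_j$. Hence the cross terms vanish and $\defectj(u_j,U\times U)\leq\defectj(u,U\times U)+\defectj(u\phi_j,U\times U)+o(1)$.

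The rest of your construction then goes through:
\begin{itemize}
\item replacing $u$ by $u(x_j^{\ii})$ inside each corrector is negligible;
\item the tail beyond $\eps_j/2$ vanishes as you say;
\item $\eps_j^n\sum_{\ii\in\mathcal I_j(U)}|u(x_j^{\ii})|^p\to\theta\int_U|u|^p\beta\,dx$ by \eqref{e:empirical-measures-new};
\item boundary cells can be discarded since $\eps_j^n\#\mathscr I_j(U)\to0$.
\end{itemize}

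Once repaired, your direct ansatz is a genuinely different and more elementary route for Lipschitz targets. The paper instead applies joining and filling to the constant sequence $u$, so the correctors sit where $v_j$ is constant and their interaction with the bulk is controlled by the upper filling estimate. That costs the joining error $c_m/N$, but it reuses the lower-bound machinery and needs no short-range cross-term estimate.

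A minor point: a continuous function that is $\geq1$ a.e.\ on the open set $T$ is $\geq1$ on $\overline T$. Smooth competitors therefore approximate $\Capp(\overline T,B_R)$, which can exceed $\Capp(T,B_R)$. Use a truncated $W^{1,p}_0(B_R)$ competitor instead; the BBM formula still applies to it.
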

The proof follows the lines of \cite[Propositions~3.10--3.11]{FocardiAperiodic}, though in the current 
setting both the bulk energy and the capacitary term now undergo the BBM limit so that the derivation of the limiting energy contributions changes.
For the lower bound we replace formulas (3.28)--(3.29)
therein by the lower filling estimate.  For the upper bound we
insert capacitary correctors in the balls where the filled sequence
is constant and apply the upper filling estimate.
More precisely, for sequences with bounded normalized energy, 
in the lower bound, the filled sequence still converges to $u$, and its
full normalized fractional energy yields
$K_{n,p}\int_U|\nabla u|^p\,dx$.  The internal energies separated by
filling yield the additional capacitary term.  In the upper bound,
the capacitary correctors are inserted into regions where the
target function has been made constant, and the upper filling estimate
controls the interaction error.  After taking $j\to\infty$, letting the joining
parameter $N$ tend to infinity with $m$ fixed leaves the full BBM
coefficient in front of the gradient term, independently of the
relative rates of $\eps_j$ and $s_j$.

We stress that the factor $1-s_j$ simplifies the comparison with the
fixed-order arguments in \cite{FocardiAperiodic}.
Combined with the uniform Poincar\'e--Wirtinger inequality, it
makes the tails of the capacitary competitors negligible compared
with their internal energies, even when the ratio of the two
concentric radii remains fixed.  The same estimate improves the
filling error to a quantity of order $(1-s_j)^{1/p}m^{-s_j}$.
Thus the separation of the bulk and capacitary contributions is
asymptotically exact for each fixed $m\geq2$, and a further limit
in $m$ is unnecessary.  We nevertheless keep $m$ explicit to
facilitate comparison with the fixed-order constructions.

Propositions~\ref{p:aperiodic-liminf} and
\ref{p:aperiodic-limsup} below prove Theorem~\ref{t:aperiodic-BBM}.
\begin{proposition}[Compactness and $\Gamma$-liminf inequality]
\label{p:aperiodic-liminf}
Every sequence $(u_j)_j\subset L^p(U)$ such that
\begin{equation}\label{e:aperiodic-equicoercivity-assumption-new}
 \sup_j\mathcal F_j(u_j)<+\infty
\end{equation}
is relatively compact in $L^p(U)$, and all its cluster points belong
to $W^{1,p}(U)$.  Moreover, for every sequence $u_j\to u$ strongly
in $L^p(U)$,
\begin{equation}\label{e:aperiodic-liminf-new}
 \liminf_{j\to+\infty}\mathcal F_j(u_j)\geq\mathcal F(u).
\end{equation}
\end{proposition}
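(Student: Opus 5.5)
Compactness comes first. If $\sup_j\mathcal F_j(u_j)<+\infty$, then $\sup_j(1-s_j)[u_j]^p_{s_j,p;U}<+\infty$. The obstacle constraint gives $u_j=0$ on a set of positive measure in every Vorono\"i cell, but that set shrinks, so I would not try to get an $L^p$ bound from it directly. I would argue instead by truncation. Replacing $u_j$ by $T_k u_j:=(u_j\wedge k)\vee(-k)$ preserves the constraint and lowers the energy. The BBM compactness theorem \cite[Theorem~1.2]{PoncePoincare}, together with \eqref{e:uniform-PW-prelim}, gives compactness of $u_j-(u_j)_U$ in $L^p(U)$. It remains to bound the means $(u_j)_U$. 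Suppose $|(u_j)_U|\to\infty$ along a subsequence. Then $u_j/|(u_j)_U|$ converges to a constant of modulus one. Along it, the liminf argument below, applied to the rescaled functions, would give a capacitary term $\theta K_{n,p}\capp(T)\int_U\beta>0$ multiplied by $|(u_j)_U|^p\to\infty$, which contradicts the energy bound. Hence the means are bounded. Cluster points lie in $W^{1,p}(U)$ by \cite[Theorem~8]{PonceGamma}.

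For the liminf, take $u_j\to u$ with $\liminf\mathcal F_j(u_j)<\infty$, pass to a subsequence realizing it, and truncate so that $\|u_j\|_\infty\le k$. Truncation is harmless, since $T_ku\to u$ and the limit functional is continuous along this truncation. Fix $m\ge2$ and $N$, and apply Lemma~\ref{l:uniform-joining} via Corollary~\ref{c:Delone-joining} with $\Ieps=\mathcal I_j(U)$ and $\rj=\eps_j$. The obstacles $T_j^{\ii}\subset B_{\lambda_j}(x_j^\ii)$ lie inside $D_j^\ii$ for large $j$, because $\lambda_j/\eps_j\to0$ and $h_j\le N$. The joining therefore does not touch them: $w_j=0$ on $\mathcal K_j\cap U$. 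Corollary~\ref{c:combination} gives
\[
\mathcal F_j(u_j)\ge\frac{1-\delta_{j,m}}{1+c_m/N}\Big(\defectj(v_j,U\times U)+\mathcal S_j\Big).
\]
Since $v_j\to u$ by \eqref{e:vj-convergence}, the BBM $\Gamma$-liminf \cite[Theorem~8]{PonceGamma} yields $\liminf\defectj(v_j,U\times U)\ge K_{n,p}\int_U|\nabla u|^p$.

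For $\mathcal S_j$, write $z_j^\ii=(u_j)_{\Cij}$. On $P_j^\ii$ the function $w_j$ equals $z_j^\ii$ on the annulus $\Cij$ and vanishes on $T_j^\ii$. Hence $(z_j^\ii-w_j)/z_j^\ii$, extended by $0$ outside $D_j^\ii$, is admissible for the interior capacity $\operatorname{cap}_{s_j,p}(T_j^\ii,B_{m\rho_j};\rho_j)$. Rescaling by $\lambda_j$ and using $\lambda_j^{n-s_jp}=\eps_j^n$ gives
\[
\mathcal S_j\ge\sum_{\ii\in\Ieps}|z_j^\ii|^p\,\eps_j^n\,(1-s_j)\operatorname{cap}_{s_j,p}(T,B_{mL_j};L_j),\qquad L_j=\rho_j/\lambda_j\to\infty .
\]
Here $L_j\to\infty$ because $\rho_j\ge m^{-3N-2}\eps_j\gg\lambda_j$. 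By Lemma~\ref{l:interior-capacity-BBM}, the last factor tends to $K_{n,p}\capp(T)$ uniformly in $\ii$, since $L_j$ does not depend on $\ii$.

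Finally, the sum $\eps_j^n\sum_\ii|z_j^\ii|^p$ has to converge to $\theta\int_U|u|^p\beta$. Let $\zeta_j$ be as in \eqref{e:zeta}; then $\zeta_j\to u$ by Corollary~\ref{c:Delone-joining}. Since $u_j$ is bounded, $\zeta_j$ is bounded as well. Write $\eps_j^n\sum_\ii|z_j^\ii|^p=\#\mathcal I_j(U)\eps_j^n\int|\zeta_j|^p\,d\mu_j$. The limit is obtained by approximating $u$ with a continuous $g$: estimate $\int|\zeta_j-g(x_j^\ii)|^p d\mu_j$ using $\Ln(\Vij)\ge\omega_n\eps_j^n$ and the strong convergence of $\zeta_j$, and then use \eqref{e:empirical-measures-new}. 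This is the step I expect to be the main obstacle, because $\mu_j$ is not absolutely continuous; it is the same argument as in \cite[Proposition~3.10]{FocardiAperiodic}. With that in hand, letting $j\to\infty$ and then $N\to\infty$ gives \eqref{e:aperiodic-liminf-new}.
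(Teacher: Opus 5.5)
Your proposal is correct and follows the paper's proof in all essential points. The ingredients are the same: joining plus filling through Corollary~\ref{c:combination}, Ponce's $\Gamma$-liminf applied to the filled sequence $v_j$, the interior-capacity lower bound for $\mathcal S_j$ combined with Lemma~\ref{l:interior-capacity-BBM} and $\lambda_j^{n-s_jp}=\eps_j^n$, and a normalization-by-contradiction argument using $\capp(T)\int_U\beta\,dx>0$ for the $L^p$ bound. The differences are only technical: the truncation is unnecessary, you normalize by the means $(u_j)_U$ rather than by $\|u_j\|_{L^p(U)}$, and you identify the limit of $\eps_j^n\sum_{\ii}|z_j^{\ii}|^p$ directly through $\mu_j$ and a uniformly continuous approximant of $u$ (which works thanks to $\Ln(V_j^{\ii})\geq\omega_n\eps_j^n$ and $V_j^{\ii}\subseteq\overline B_{R_j}(x_j^{\ii})$) instead of through the cell densities $\Psi_j\stackrel{*}{\rightharpoonup}\theta\beta$.
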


\begin{proof}
We first prove the lower-bound inequality for a sequence $u_j\to u$ strongly in $L^p(U)$. 
It is enough to consider a subsequence, not relabelled, on which the
left-hand side of \eqref{e:aperiodic-liminf-new} is finite and is
attained as a limit.  In particular,
\begin{equation}\label{e:aperiodic-energy-bound-new}
 \sup_j\defectj(u_j,U\times U)<+\infty.
\end{equation}
Fix $m\geq2$ and $N\in\N$.  Apply
Lemma~\ref{l:uniform-joining} to the points
$\{x_j^{\ii}\}_{\ii\in\mathcal I_j(U)}$.  Denote by $h_j$, $w_j$,
$v_j$, $P_j^{\ii}$, $D_j^{\ii}$, and $\mathcal{S}_j$ the quantities provided by
that lemma, and put
\begin{equation}\label{e:joining-values-new}
 z_j^{\ii}:=(u_j)_{C_j^{\ii,h_j}},
 \qquad
 \zeta_j:=\sum_{\ii\in\mathcal I_j(U)}
 z_j^{\ii}\chi_{V_j^{\ii}}.
\end{equation}
For fixed $m,N$, scale separation \eqref{e:scale-separation-new}
implies, for all sufficiently large $j$,
\begin{equation}\label{e:obstacle-inside-filling-new}
 T_j^{\ii}\subset
 B_{m^{-3h_j-3}\eps_j}(x_j^{\ii})\subset D_j^{\ii}
 \quad\text{for every }\ii\in\mathcal I_j(U).
\end{equation}
The joining modification $w_j$ in
Lemma~\ref{l:uniform-joining} does not change $u_j$ near the obstacles;
hence $w_j$ remains admissible for $\mathcal F_j$.  Obstacles
associated with boundary cells are untouched.  From
\eqref{e:uniform-joining-error} we obtain the analogue of
\cite[Proposition~3.10, formula~(3.27)]{FocardiAperiodic}:
\begin{equation}\label{e:aperiodic-reduction-new}
 \left(1+\frac{c_m}{N}\right)
 \liminf_{j\to+\infty}\mathcal F_j(u_j)
 \geq\liminf_{j\to+\infty}
 \defectj(w_j,U\times U).
\end{equation}

The essential change with respect to
\cite[Proposition~3.10]{FocardiAperiodic} is the use of filling to
retain the full bulk energy.  For fixed $m$, the factor
$\delta_{j,m}$ tends to zero.  By
\eqref{e:filling-lower} and the boundedness of the energy of $w_j$,
the quantities $\defectj(v_j,U\times U)+\mathcal S_j$ remain
bounded.  Since $v_j\to u$ in $L^p(U)$, the regional BBM lower
bound \cite[Theorem~8]{PonceGamma} therefore gives
\begin{align}
 \liminf_{j\to+\infty}\defectj(w_j,U\times U)
 &\geq\liminf_{j\to+\infty}\defectj(v_j,U\times U)
       +\liminf_{j\to+\infty}\mathcal S_j\notag\\
 &\geq K_{n,p}\int_U|\nabla u|^p\,dx
       +\liminf_{j\to+\infty}\mathcal S_j.
 \label{e:aperiodic-328-new}
\end{align}
This replaces \cite[Proposition~3.10, formula~(3.28)]{FocardiAperiodic}
and yields the complete BBM coefficient already at fixed $m$.
The same lower bound shows that $u\in W^{1,p}(U)$.

We now estimate the internal energies $\mathcal{S}_j$.  Recall that
\begin{equation*}
 \rho_j=m^{-3h_j-2}\eps_j,
 \qquad
 D_j^{\ii}=B_{\rho_j}(x_j^{\ii}),
 \qquad
 P_j^{\ii}=B_{m\rho_j}(x_j^{\ii}),
\end{equation*}
and that $w_j=z_j^{\ii}$ on
$P_j^{\ii}\setminus\overline{D_j^{\ii}}$.  Since $w_j=0$
a.e. on $T_j^{\ii}$, translation and
scaling in \eqref{e:interior-relative-capacity-new} give
\begin{equation}\label{e:single-cell-capacity-new}
 \defectj(w_j,P_j^{\ii}\times P_j^{\ii})
 \geq(1-s_j)\lambda_j^{n-s_jp}|z_j^{\ii}|^p
 \operatorname{cap}_{s_j,p}
 \left(T,B_{m\rho_j/\lambda_j};
             \frac{\rho_j}{\lambda_j}\right).
\end{equation}
For $z_j^{\ii}\neq0$, this follows by testing the relative capacity
with the function $(z_j^{\ii}-w_j)/z_j^{\ii}$ on $P_j^{\ii}$, extended by zero outside $P_j^{\ii}$ and 
rescaled by $\lambda_j$. The zero extension has finite global seminorm because the function 
vanishes on $P_j^{\ii}\setminus\overline D_j^{\ii}$. For $z_j^{\ii}=0$ the estimate is immediate.  
Because $h_j\in\{1,\ldots,N\}$ and \eqref{e:scale-separation-new} holds,
\begin{equation}\label{e:relative-radius-divergence-new}
 \frac{\rho_j}{\lambda_j}
 \geq m^{-3N-2}\frac{\eps_j}{\lambda_j}
 \longrightarrow+\infty.
\end{equation}

Set
\begin{equation}\label{e:Psi-new}
 \Psi_j:=\sum_{\ii\in\mathcal I_j(U)}
 \frac{\eps_j^n}{\Ln(V_j^{\ii})}\chi_{V_j^{\ii}}.
\end{equation}
The inclusions \eqref{e:Voronoi-radii-new} imply that $(\Psi_j)_j$
is bounded in $L^\infty(U)$.  Moreover,
\begin{equation}\label{e:Psi-convergence-new}
 \Psi_j\stackrel{*}{\rightharpoonup}\theta\beta
 \qquad\text{in }L^\infty(U).
\end{equation}
Indeed, for a compactly contained Lipschitz set $A\subset U$, the
difference between $\int_A\Psi_j\,dx$ and
$\eps_j^n\#\mathcal I_j(A)$ is bounded by a constant times
$\eps_j^n\#\mathscr I_j(A)$.  The latter tends to zero by
\cite[Proposition~2.5, formulas~(2.4)--(2.5)]
{FocardiAperiodic}, while
\eqref{e:Delone-scaling-new}--\eqref{e:empirical-measures-new}
identify the limit.  This is exactly the counting argument following
\cite[Proposition~3.10, formula~(3.31)]{FocardiAperiodic}.

By Corollary~\ref{c:Delone-joining},
$\zeta_j\to u$ strongly in $L^p(U)$.  Therefore
\begin{equation}\label{e:weighted-zeta-convergence-new}
 \int_U|\zeta_j|^p\Psi_j\,dx
 \longrightarrow\theta\int_U|u|^p\beta\,dx.
\end{equation}
Using \eqref{e:scale-separation-new}, summing
\eqref{e:single-cell-capacity-new}, and applying
Lemma~\ref{l:interior-capacity-BBM} with
$L_j=\rho_j/\lambda_j$, we conclude that
\begin{align}
 \liminf_{j\to+\infty}\mathcal{S}_j
 &\geq \theta K_{n,p}\capp(T)
 \int_U|u|^p\beta\,dx.
 \label{e:aperiodic-329-new}
\end{align}
Estimate \eqref{e:aperiodic-329-new} replaces
\cite[Proposition~3.10, formula~(3.29)]{FocardiAperiodic}:
the capacitary contribution is obtained from the internal energies
separated by filling.

Combining \eqref{e:aperiodic-reduction-new},
\eqref{e:aperiodic-328-new}, and \eqref{e:aperiodic-329-new}, we obtain
\begin{multline}\label{e:aperiodic-final-liminf-new}
 \left(1+\frac{c_m}{N}\right)
       \liminf_{j\to+\infty}\mathcal F_j(u_j)
 \geq K_{n,p}\int_U|\nabla u|^p\,dx
       +\theta K_{n,p}\capp(T)\int_U|u|^p\beta\,dx.
\end{multline}
Letting $N\to+\infty$ with $m$ fixed proves
\eqref{e:aperiodic-liminf-new}.  No further limit in $m$ is needed,
since both the filling error and the relative capacitary error
have already vanished as $s_j\uparrow1$.

It remains to prove the compactness assertion.  We first claim that
\eqref{e:aperiodic-equicoercivity-assumption-new} implies
\begin{equation}\label{e:aperiodic-Lp-bound-new}
 \sup_j\|u_j\|_{L^p(U)}<+\infty.
\end{equation}
Otherwise, after passing to a subsequence, set
$\widehat{u}_j:=u_j/\|u_j\|_{L^p(U)}$.  Then
\begin{equation*}
 \|\widehat{u}_j\|_{L^p(U)}=1,
 \qquad
 \mathcal F_j(\widehat{u}_j)\longrightarrow0.
\end{equation*}
The BBM compactness theorem
\cite[Theorem~1.2]{PoncePoincare} yields, up to a further
subsequence, $\widehat{u}_j\to \widehat{u}$ strongly in $L^p(U)$, with
$\|\widehat{u}\|_{L^p(U)}=1$.  The already proved liminf inequality gives
\begin{equation*}
 0\geq\mathcal F(\widehat{u})
 =K_{n,p}\int_U|\nabla\widehat{u}|^p\,dx
 +\theta K_{n,p}\capp(T)\int_U|\widehat{u}|^p\beta\,dx.
\end{equation*}
Since $T$ is nonempty and open, $\capp(T)>0$; since $\beta>0$ a.e.,
this forces $\widehat{u}=0$ a.e. on $U$, a contradiction.  Thus
\eqref{e:aperiodic-Lp-bound-new} holds.  A second application of
\cite[Theorem~1.2]{PoncePoincare}, together with
\eqref{e:aperiodic-equicoercivity-assumption-new}, gives relative
compactness in $L^p(U)$.  Finally, equations
\eqref{e:aperiodic-energy-bound-new} and
\eqref{e:aperiodic-328-new} show that every cluster point belongs to
$W^{1,p}(U)$.
\end{proof}

\begin{proposition}[The $\Gamma$-limsup inequality]
\label{p:aperiodic-limsup}
For every $u\in L^p(U)$ there exists a sequence
$(u_j)_j\subset L^p(U)$ such that $u_j\to u$ strongly in $L^p(U)$
and
\begin{equation}\label{e:aperiodic-limsup-new}
 \limsup_{j\to+\infty}\mathcal F_j(u_j)\leq\mathcal F(u).
\end{equation}
\end{proposition}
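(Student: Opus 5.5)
We construct recovery sequences first for a dense class and then conclude by density and lower semicontinuity of the $\Gamma$-limsup. By the local Lipschitz continuity of $\mathcal F$ in the strong $W^{1,p}(U)$ topology (both terms are continuous), it suffices to treat $u\in W^{1,\infty}(U)$, say the restriction of a function in $C_c^\infty(\Rn)$, which is dense in $W^{1,p}(U)$ because $U$ is Lipschitz. Fix such $u$ and integers $m\geq2$, $N\geq1$.

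\textbf{Step 1: making $u$ constant near the centres.} Apply Lemma~\ref{l:uniform-joining} to the constant sequence $u_j:=u$, which has bounded normalized energy by the BBM formula, and to the centres $\{x_j^{\ii}\}_{\ii\in\mathcal I_j(U)}$. This yields $h_j$, $w_j$, and the filled functions $v_j$, with $v_j=z_j^{\ii}:=(u)_{C_j^{\ii,h_j}}$ on $P_j^{\ii}=B_{m\rho_j}(x_j^{\ii})$. Then $v_j\to u$ in $L^p(U)$ and $\|v_j\|_\infty\le\|u\|_\infty$. Combining \eqref{e:uniform-joining-error} with \eqref{e:filling-lower} gives
\[
\defectj(v_j,U\times U)\le\frac{1+c_m/N}{1-\delta_{j,m}}\defectj(u,U\times U).
\]
Hence, by BBM,
\[
\limsup_j\defectj(v_j,U\times U)\le(1+c_m/N)K_{n,p}\int_U|\nabla u|^p\,dx .
\]
Boundary cells also need treatment: obstacles $T_j^{\ii}$ meeting $U$ with $\ii\notin\mathcal I_j(U)$ lie in a layer of width $O(\eps_j)$ near $\partial U$. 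There I multiply $v_j$ by a cut-off vanishing on that layer; alternatively I include those centres whose balls $B_{\eps_j}(x_j^{\ii})$ fit in a slightly enlarged Lipschitz domain, or set the corrector to zero there. Since the number of such cells is $o(\eps_j^{-n})$ by \cite[Proposition~2.5]{FocardiAperiodic}, they cost nothing in the limit.

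\textbf{Step 2: inserting capacitary correctors.} Let $L_j:=\rho_j/\lambda_j\to+\infty$ by \eqref{e:relative-radius-divergence-new}, and pick $\varphi_j$ nearly optimal for $\operatorname{cap}_{s_j,p}(T,B_{mL_j};L_j)$, with $0\le\varphi_j\le1$. Define
\[
u_j:=v_j\Bigl(1-\sum_{\ii}\varphi_j\bigl((\cdot-x_j^{\ii})/\lambda_j\bigr)\Bigr).
\]
On $P_j^{\ii}$ this equals $z_j^{\ii}(1-\varphi_j(\cdot))$, it vanishes on $T_j^{\ii}$, and it coincides with $v_j$ outside $\bigcup D_j^{\ii}$. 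Moreover $\|u_j-v_j\|_{L^p}^p\le\|u\|_\infty^p\,\#\mathcal I_j\,\rho_j^n\to0$. Now $u_j$ plays the role of $w_j$ in the filling decomposition: it agrees with its filled version $v_j$ off the inner balls, and it is constant (equal to $z_j^{\ii}$) on the annuli $P_j^{\ii}\setminus\overline{D_j^{\ii}}$. The proof of \eqref{e:filling-upper} used only this structure together with the Poincar\'e estimate \eqref{e:Lp-internal}, so it gives
\[
\defectj(u_j,U\times U)\le(1+\delta_{j,m})\bigl(\defectj(v_j,U\times U)+\mathcal S_j'\bigr).
\]
Here, after scaling,
\[
\mathcal S_j'=\sum_{\ii}|z_j^{\ii}|^p(1-s_j)\lambda_j^{n-s_jp}\operatorname{cap}_{s_j,p}(T,B_{mL_j};L_j)+o(1).
\]

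\textbf{Step 3: passing to the limit.} Using $\lambda_j^{n-s_jp}=\eps_j^n$ and \eqref{e:interior-capacity-limit-new}, the capacity factor tends to $K_{n,p}\capp(T)$. It remains to treat $\eps_j^n\sum_{\ii}|z_j^{\ii}|^p=\int_U|\zeta_j|^p\Psi_j\,dx$. By Corollary~\ref{c:Delone-joining} we have $\zeta_j\to u$, and by \eqref{e:Psi-convergence-new} this integral converges to $\theta\int_U|u|^p\beta\,dx$. Therefore
\[
\limsup_j\mathcal F_j(u_j)\le(1+c_m/N)K_{n,p}\int_U|\nabla u|^p\,dx+\theta K_{n,p}\capp(T)\int_U|u|^p\beta\,dx .
\]
Letting $N\to\infty$ and taking a diagonal sequence concludes the proof.

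The main obstacle is Step 2: verifying rigorously that the filling estimate applies with the corrector inserted. One must check that interactions between $D_j^{\ii}$ and the exterior of $P_j^{\ii}$ are controlled by $(1-s_j)m^{-s_jp}$ times the internal energy. Two ingredients do this: the tail bound \eqref{e:tail}, and the Poincar\'e bound of $\int_{D_j^{\ii}}|u_j-z_j^{\ii}|^p$ by the internal energy, exactly as in \eqref{e:Lp-internal}. A second delicate point is that the corrector is supported in $B_{\rho_j}$ while its energy is computed only on $B_{m\rho_j}$. This is precisely why the interior capacity with the fixed ratio $m$ is the right competitor.
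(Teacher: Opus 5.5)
Your overall strategy (joining and filling for the constant sequence, rescaled capacitary profiles in the balls $D_j^{\ii}$, then the filling upper estimate for the pair (corrected function, $v_j$)) matches the paper. Your check that the proof of \eqref{e:filling-upper} needs only constancy on $P_j^{\ii}\setminus\overline{D_j^{\ii}}$ and agreement with $v_j$ off $\bigcup_{\ii}D_j^{\ii}$ is correct. Using $\operatorname{cap}_{s_j,p}(T,B_{mL_j};L_j)$ with \eqref{e:interior-capacity-limit-new}, instead of $C_{s_j,p}(T,B_{L_j})$ with \eqref{e:expanding-capacity-limit}, is a harmless variant. Two steps, however, fail as written.

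\textbf{$L^p$ convergence of the recovery sequence.} Your bound $\|u_j-v_j\|^p_{L^p(U)}\le\|u\|^p_{L^\infty}\#\mathcal I_j(U)\rho_j^n$ does not tend to zero. Since $\rho_j=m^{-3h_j-2}\eps_j$ with $h_j\le N$, the right-hand side is at least $m^{-(3N+2)n}\|u\|^p_{L^\infty}\#\mathcal I_j(U)\eps_j^n\to m^{-(3N+2)n}\theta\|u\|^p_{L^\infty}>0$. For fixed $m,N$ the balls $D_j^{\ii}$ occupy a fraction of $U$ bounded below. The missing point is that the capacitary profile carries little $L^p$ mass compared with the volume of its support. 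Since $\varphi_j=0$ on $B_{mL_j}\setminus B_{L_j}$, the scaled Poincar\'e inequality \eqref{e:interior-capacity-PW-new} gives $\|\varphi_j\|^p_{L^p(\Rn)}\le c_m(1-s_j)L_j^{s_jp}[\varphi_j]^p_{s_j,p;B_{mL_j}}$. Using $\lambda_j^nL_j^{s_jp}=\eps_j^n\rho_j^{s_jp}$, you get
\[
 \|u_j-v_j\|^p_{L^p(U)}
 \le c_m\rho_j^{s_jp}\,\eps_j^n\sum_{\ii}|z_j^{\ii}|^p\,
 (1-s_j)[\varphi_j]^p_{s_j,p;B_{mL_j}}\longrightarrow0,
\]
which is the paper's \eqref{e:corrector-Lp-small-new}. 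With your crude bound you would instead need an extra diagonal limit $m\to\infty$, with $N=N(m)$ and $c_m/N(m)\to0$, which you do not perform.

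\textbf{Boundary obstacles.} Your one concrete option is to multiply $v_j$ by a cut-off vanishing on the whole $O(\eps_j)$-layer along $\partial U$. This fails unless $u=0$ on $\partial U$, which your dense class does not ensure. With transition width comparable to $\eps_j$, such a cut-off has normalized energy of order $\eps_j^{1-s_jp}\to+\infty$. It would also destroy the constancy of $v_j$ on the balls $P_j^{\ii}$ of interior cells near $\partial U$, which Step 2 needs. The count $\eps_j^n\#\mathscr I_j(U)\to0$ helps only if each boundary obstacle is removed at its own scale $\lambda_j$, at cost $O(\eps_j^n)$. As in the paper, multiply the corrected function by $1-\theta_j$, where $\theta_j:=\max_{\ii\in\mathscr I_j(U)}\eta((\cdot-x_j^{\ii})/\lambda_j)$, $\eta\in C_c^\infty(B_2)$, $\eta=1$ on $B_1$. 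The balls $B_{2\lambda_j}(x_j^{\ii})$ miss the interior joining regions. Scaling gives $(1-s_j)[\theta_j]^p_{s_j,p}\le\eps_j^n\#\mathscr I_j(U)(1-s_j)[\eta]^p_{s_j,p}\to0$. Minkowski's inequality with $\|u\|_{L^\infty(U)}$ then shows that both the extra energy and the $L^p$ error vanish. Your other suggestions are not worked out: enlarging the domain would need a separate argument, and ``setting the corrector to zero'' does not enforce the constraint.
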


\begin{proof}
It is enough to prove the assertion for
$u\in W^{1,\infty}(U)$; the general case follows by approximation in
$W^{1,p}(U)$ and a diagonal argument, exactly as at the beginning of
the proof of \cite[Proposition~3.11]{FocardiAperiodic}.  Fix
$m\geq2$ and $N\in\N$.  We apply
Lemma~\ref{l:uniform-joining} to the constant sequence $u_j=u$ and
to the interior centres
$\{x_j^{\ii}\}_{\ii\in\mathcal I_j(U)}$.  We retain the notation
$h_j,w_j,v_j,P_j^{\ii},D_j^{\ii}$ and set
\begin{equation}\label{e:limsup-values-new}
 z_j^{\ii}:=(u)_{C_j^{\ii,h_j}},
 \qquad
 \zeta_j:=\sum_{\ii\in\mathcal I_j(U)}
 z_j^{\ii}\chi_{V_j^{\ii}}.
\end{equation}
By Lemma~\ref{l:uniform-joining} and
Corollary~\ref{c:Delone-joining}, $v_j\to u$ and $\zeta_j\to u$
strongly in $L^p(U)$.  Moreover,
\eqref{e:uniform-joining-error} and \eqref{e:filling-lower} give
\begin{equation}\label{e:limsup-bulk-new}
 \limsup_{j\to+\infty}\defectj(v_j,U\times U)
 \leq\left(1+\frac{c_m}{N}\right)
 K_{n,p}\int_U|\nabla u|^p\,dx,
\end{equation}
where we used $\delta_{j,m}\to0$ for fixed $m$ and the regional
BBM formula applied to the constant sequence $u_j=u$.

Set
\begin{equation}\label{e:limsup-radii-new}
 \rho_j:=m^{-3h_j-2}\eps_j,
 \qquad L_j:=\frac{\rho_j}{\lambda_j}.
\end{equation}
By \eqref{e:scale-separation-new}, $L_j\to+\infty$.  Choose a
$1/j$-minimizer $\varphi_j$ for
$C_{s_j,p}(T,B_{L_j})$ and truncate it so that
\begin{equation}\label{e:capacitary-corrector-new}
 0\leq\varphi_j\leq1,\qquad
 \varphi_j=0\ \text{a.e. on }B_{L_j}^c,\qquad
 \varphi_j=1\quad\text{a.e. on }T,
\end{equation}
and
\begin{equation}\label{e:corrector-energy-new}
 a_j:=(1-s_j)[\varphi_j]_{s_j,p}^p
 \leq(1-s_j)C_{s_j,p}(T,B_{L_j})+\frac1j.
\end{equation}
The convergence of rescaled relative fractional capacities to the global local counterpart
in \eqref{e:expanding-capacity-limit} of Corollary~\ref{c:capacity-convergence}  yields
\begin{equation}\label{e:corrector-capacity-limit-new}
 a_j\longrightarrow K_{n,p}\capp(T).
\end{equation}

We insert the capacitary correctors into the joining modification
$w_j$ furnished by Lemma~\ref{l:uniform-joining}.  The resulting
function is denoted by $\widehat u_j$: it agrees with $w_j$ outside
the balls $D_j^{\ii}$, while inside each such ball the profile is
replaced by a capacitary correction of the constant $z_j^{\ii}$.
Its filling is still $v_j$.  More precisely, define
\begin{equation}\label{e:interior-recovery-new}
 \widehat u_j(x):=
 \begin{cases}
 z_j^{\ii}\left(1-
 \varphi_j\left(\dfrac{x-x_j^{\ii}}{\lambda_j}\right)\right),
 &x\in D_j^{\ii},\quad \ii\in\mathcal I_j(U),\\[6pt]
 v_j(x),&x\in U\setminus
 \displaystyle\bigcup_{\ii\in\mathcal I_j(U)}D_j^{\ii}.
 \end{cases}
\end{equation}
Since $v_j=z_j^{\ii}$ on $D_j^{\ii}$ and $\varphi_j$ vanishes
outside $B_{L_j}$, this definition gives a function in
$W^{s_j,p}(U)$ which is constant on
$P_j^{\ii}\setminus\overline D_j^{\ii}$ and vanishes
a.e. on every interior obstacle
$T_j^{\ii}$.  Its filling is precisely $v_j$.  Therefore the proof
of \eqref{e:filling-upper}, applied to the pair
$(\widehat u_j,v_j)$, gives
\begin{equation}\label{e:recovery-filling-upper-new}
 \defectj(\widehat u_j,U\times U)
 \leq(1+\delta_{j,m})
 \left(\defectj(v_j,U\times U)+\widehat{\mathcal S}_j\right),
\end{equation}
where
\begin{equation*}
 \widehat{\mathcal S}_j:=\sum_{\ii\in\mathcal I_j(U)}
 \defectj(\widehat u_j,
 P_j^{\ii}\times P_j^{\ii}).
\end{equation*}
A change of variables, \eqref{e:scale-separation-new}, and
\eqref{e:Psi-new} imply
\begin{align}
 \widehat{\mathcal S}_j
 &\leq (1-s_j)\lambda_j^{n-s_jp}
 [\varphi_j]_{s_j,p}^p
 \sum_{\ii\in\mathcal I_j(U)}|z_j^{\ii}|^p\notag\\
 &=a_j\int_U|\zeta_j|^p\Psi_j\,dx.
 \label{e:limsup-capacitary-sum-new}
\end{align}
In view of \eqref{e:Psi-convergence-new},
\eqref{e:weighted-zeta-convergence-new}, and
\eqref{e:corrector-capacity-limit-new},
\begin{equation}\label{e:limsup-capacitary-limit-new}
 \limsup_{j\to+\infty}\widehat{\mathcal S}_j
 \leq\theta K_{n,p}\capp(T)
 \int_U|u|^p\beta\,dx.
\end{equation}
This proves the estimate corresponding to
\cite[Proposition~3.11, Step~1 and formula~(3.35)]{FocardiAperiodic}.

We also have $\widehat u_j\to u$ strongly in $L^p(U)$.
Indeed, the scaled uniform Poincar\'e inequality on
$(B_{2L_j},B_{2L_j}\setminus B_{L_j})$ gives
\begin{equation*}
 \|\varphi_j\|_{L^p(\Rn)}^p
 \leq c(1-s_j)L_j^{s_jp}[\varphi_j]_{s_j,p}^p.
\end{equation*}
Consequently,
\begin{align}
 \|\widehat u_j-v_j\|_{L^p(U)}^p
 &\leq c\rho_j^{s_jp}a_j\eps_j^n
 \sum_{\ii\in\mathcal I_j(U)}|z_j^{\ii}|^p\notag\\
 &=c\rho_j^{s_jp}a_j
 \int_U|\zeta_j|^p\Psi_j\,dx\longrightarrow0.
 \label{e:corrector-Lp-small-new}
\end{align}

It remains to treat the obstacles whose Vorono\"i cells meet
$\partial U$.  Let $\eta\in C_c^\infty(B_2)$ satisfy
$0\leq\eta\leq1$ and $\eta=1$ on $B_1$.  Since
$\lambda_j/\eps_j\to0$, the balls
$B_{2\lambda_j}(x_j^{\ii})$, $\ii\in\mathscr I_j(U)$, are mutually
disjoint for large $j$ and do not meet the interior joining and
filling regions.  In those balls $\widehat u_j=u$.  Define
\begin{equation}\label{e:boundary-recovery-new}
 u_j(x):=
 \begin{cases}
 \left(1-\eta\left(\dfrac{x-x_j^{\ii}}{\lambda_j}\right)\right)
 \widehat u_j(x),
 &x\in B_{2\lambda_j}(x_j^{\ii})\cap U,
 \quad\ii\in\mathscr I_j(U),\\[6pt]
 \widehat u_j(x),&\text{otherwise}.
 \end{cases}
\end{equation}
Thus $u_j$ is admissible for $\mathcal F_j$.  Writing
$(\partial U)_r:=\{x\in\Rn:\operatorname{dist}(x,\partial U)<r\}$,
the boundary counting estimate \cite[Proposition~2.5, formula~(2.5)]
{FocardiAperiodic} gives
\begin{equation}\label{e:boundary-counting-new}
 \eps_j^n\#\mathscr I_j(U)
 \leq c\,\\Ln((\partial U)_{2R_j})\longrightarrow0.
\end{equation}
The boundary correction corresponds to Steps~4--6 of
\cite[Proposition~3.11]{FocardiAperiodic}.  To estimate its cost,
set
\[
 \theta_j(x):=\max_{\ii\in\mathscr I_j(U)}
             \eta\left(\frac{x-x_j^{\ii}}{\lambda_j}\right),
\]
with $\theta_j=0$ if the index set is empty.  Then
$u_j=(1-\theta_j)\widehat u_j$ on $U$.  Since
$|\max_i a_i-\max_i b_i|^p\leq\sum_i|a_i-b_i|^p$, scaling gives
\begin{equation}\label{e:norma thetaj}
 (1-s_j)[\theta_j]_{s_j,p}^p
 \leq \eps_j^n\#\mathscr I_j(U)\,(1-s_j)[\eta]_{s_j,p}^p
 \longrightarrow0
\end{equation}
by \eqref{e:boundary-counting-new} and the BBM formula for $\eta$.
The identity
\begin{align*}
 u_j(x)-u_j(y)
 =(1-\theta_j(x))(\widehat u_j(x)-\widehat u_j(y))
 +\widehat u_j(y)(\theta_j(y)-\theta_j(x))
\end{align*}
and Minkowski's inequality imply
\[
 \defectj(u_j,U\times U)^{1/p}
 \leq\defectj(\widehat u_j,U\times U)^{1/p}
       +\|u\|_{L^\infty(U)}
             \bigl((1-s_j)[\theta_j]_{s_j,p}^p\bigr)^{1/p}.
\]
Therefore, by \eqref{e:norma thetaj} we get
\begin{equation}\label{e:boundary-energy-negligible-new}
 \defectj(u_j,U\times U)^{1/p}
 \leq\defectj(\widehat u_j,U\times U)^{1/p}+\gamma_j,
 \qquad \gamma_j\longrightarrow0.
\end{equation}
Moreover, $\Ln(\operatorname{supp}\theta_j)
\leq c\lambda_j^n\#\mathscr I_j(U)\to0$, and hence
\[
 \|u_j-\widehat u_j\|_{L^p(U)}^p
 \leq\|u\|_{L^\infty(U)}^p\Ln(\operatorname{supp}\theta_j)
 \longrightarrow0.
\]
Combining \eqref{e:limsup-bulk-new},
\eqref{e:recovery-filling-upper-new},
\eqref{e:limsup-capacitary-limit-new}, and
\eqref{e:boundary-energy-negligible-new}, and using
$\delta_{j,m}\to0$, we obtain
\begin{align}
 \limsup_{j\to+\infty}\mathcal F_j(u_j)
 \leq{}&\left(1+\frac{c_m}{N}\right)
       K_{n,p}\int_U|\nabla u|^p\,dx\notag\\
 &+\theta K_{n,p}\capp(T)\int_U|u|^p\beta\,dx.
 \label{e:limsup-mN-new}
\end{align}
Letting $N\to+\infty$ at fixed $m$ and making a diagonal choice
proves \eqref{e:aperiodic-limsup-new} for $u\in W^{1,\infty}(U)$.
Since $U$ is Lipschitz and $\beta\in L^\infty(U)$, approximation
in $W^{1,p}(U)$ and a second diagonal argument prove the claim
for every $u\in W^{1,p}(U)$.  If $u\notin W^{1,p}(U)$, the
right-hand side is infinite and there is nothing to prove.
\end{proof}
We deduce next the convergence of minimizers for a suitable family of perturbed  functionals
related to $\mathcal F_j$'s.
\begin{corollary}
Let $g\in L^{p'}(U)$, $p'$ being the conjugate exponent of $p$, and 
\[
\mathcal G_j(u):=\mathcal F_j(u)+\int_Ugu dx\,\qquad 
\mathcal G(u):= \mathcal F(u)+\int_Ugu dx\,,
\]
where $\mathcal F_j$ is defined in \eqref{e:aperiodic-functionals-new},
and $\mathcal F$ in \eqref{e:aperiodic-limit-functional-new}, respectively.
Then $(\mathcal G_j)_j$ $\Gamma(L^p(U))$-converges to $\mathcal{G}$.

Moreover, if $u_j$ minimizes $\mathcal{G}_j$ over $W^{s_j,p}(U)$, 
the sequence $(u_j)_j$ converges to the unique minimizer of $\mathcal{G}$,
and moreover $\mathcal G_j(u_j)\to\mathcal G(u)$ as $j\to+\infty$.
\end{corollary}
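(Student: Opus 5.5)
The argument splits into three parts: stability of $\Gamma$-convergence under continuous perturbations, equicoercivity of the perturbed family, and uniqueness of the limit minimizer. Once these are in place, the fundamental theorem of $\Gamma$-convergence (\cite[Theorem~7.8]{DalMaso-book}) gives the claim.

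\emph{$\Gamma$-convergence.} The linear functional $u\mapsto\int_U gu\,dx$ is continuous on $L^p(U)$ because $g\in L^{p'}(U)$. A continuous perturbation does not affect $\Gamma$-limits, so Theorem~\ref{t:aperiodic-BBM} yields $\mathcal G_j\to\mathcal G$ in $\Gamma(L^p(U))$.

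\emph{Equicoercivity.} This is the main step, because the linear term can be negative and so a bound on $\mathcal G_j(u_j)$ does not immediately bound $\mathcal F_j(u_j)$. Take a sequence with $\sup_j\mathcal G_j(u_j)<+\infty$. The key claim is that $\sup_j\|u_j\|_{L^p(U)}<+\infty$.

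\begin{itemize}
\item Argue by contradiction, as in the proof of Proposition~\ref{p:aperiodic-liminf}. Suppose $t_j:=\|u_j\|_{L^p(U)}\to\infty$ along a subsequence and set $\widehat u_j:=u_j/t_j$.
\item Since $\mathcal F_j(u_j)\le C+\|g\|_{L^{p'}}t_j$ and $\mathcal F_j$ is $p$-homogeneous, we get $\mathcal F_j(\widehat u_j)\le (C+\|g\|_{L^{p'}}t_j)t_j^{-p}\to0$ because $p>1$.
\item The compactness part of Proposition~\ref{p:aperiodic-liminf} gives, up to a subsequence, $\widehat u_j\to\widehat u$ in $L^p(U)$ with $\|\widehat u\|_{L^p(U)}=1$.
\item The liminf inequality then gives $\mathcal F(\widehat u)=0$. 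Since $\capp(T)>0$ and $\beta>0$ a.e., this forces $\widehat u=0$, a contradiction.
\end{itemize}

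With the $L^p$ bound in hand, $\mathcal F_j(u_j)\le C+\|g\|_{L^{p'}}\sup_j\|u_j\|_{L^p(U)}$ is bounded. Proposition~\ref{p:aperiodic-liminf} then gives relative compactness in $L^p(U)$.

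\emph{Uniqueness and conclusion.} The limit functional $\mathcal F$ is strictly convex on its domain. Indeed, $t\mapsto|t|^p$ is strictly convex, and the term $\theta K_{n,p}\capp(T)\int_U|u|^p\beta\,dx$ has weight $\theta K_{n,p}\capp(T)\beta>0$ a.e. Adding the linear term $\int_U gu\,dx$ keeps $\mathcal G$ strictly convex. It is also lower semicontinuous and coercive on $W^{1,p}(U)$, so it has a unique minimizer $u$.

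Here a minimizer $u_j$ of $\mathcal G_j$ over $W^{s_j,p}(U)$ is understood among admissible functions, i.e.\ those vanishing on $\mathcal K_j\cap U$ (otherwise $\mathcal F_j=+\infty$). Since $\mathcal G_j(u_j)\le\mathcal G_j(0)=0$, the sequence is equicoercive by the previous step. The fundamental theorem of $\Gamma$-convergence then shows that every cluster point of $(u_j)_j$ minimizes $\mathcal G$ and that $\min\mathcal G_j\to\min\mathcal G$. By uniqueness of the minimizer, the whole sequence converges to $u$, and $\mathcal G_j(u_j)\to\mathcal G(u)$.
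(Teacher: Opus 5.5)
Your proof is correct and follows essentially the same route as the paper: $\Gamma$-convergence comes from the continuous linear perturbation, equicoercivity from the normalization-and-contradiction argument of Proposition~\ref{p:aperiodic-liminf} (using $\capp(T)>0$ and $\beta>0$ a.e.), and the conclusion from the fundamental theorem of $\Gamma$-convergence together with strict convexity of $\mathcal G$. The only difference is cosmetic: the paper first turns the contradiction argument into a uniform bound $\mathcal F_j(u)\geq\lambda\|u\|_{L^p(U)}^p$ for $j\geq j_\lambda$ and then applies Young's inequality, whereas you run the argument directly on sequences with $\sup_j\mathcal G_j(u_j)<+\infty$.
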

\begin{proof}
The $\Gamma$-convergence of $(\mathcal G_j)_j$ to $\mathcal G$ is standard in view of
Theorem~\ref{t:aperiodic-BBM} and the linearity of the perturbation.

We claim next that there is $\lambda_0>0$ such that for every 
 $\lambda\in(0,\lambda_0)$ there is $j_\lambda$ such that for $j\geq j_\lambda$
 \begin{equation}\label{e:coercivity Fj conv min} 
 \mathcal F_j(u)\geq\lambda\|u\|_{L^p(U)}^p
 \end{equation}
 for every $u\in L^p(U)$.  The proof is similar to that of \eqref{e:aperiodic-Lp-bound-new}.
 We detail it for the reader's convenience. 
 Assume not, then there are a subsequence of integers $j_k$ and $\lambda_{k}\to 0^+$ such that
 \[
 \mathcal F_{j_k}(v_{j_k})<\lambda_k\|v_{j_k}\|_{L^p(U)}^p
 \]
 for some $v_{j_k}\in L^p(U)$. Moreover, up to a normalization, we may also assume 
 $\|v_{j_k}\|_{L^p(U)}=1$. Then, Proposition~\ref{p:aperiodic-liminf} implies that a subsequence (not relabeled) converges strongly in $L^p(U)$ to some function $v$, with $\|v\|_{L^p(U)}=1$, and
 \[
\theta K_{n,p}\capp(T)
\int_U|v|^p\beta dx\leq\mathcal{F}(v)\leq\liminf_k \mathcal F_{j_k}(v_{j_k})\leq 0\,.
\]
Being $\beta>0$ a.e. on $U$ (cf. the comments right after \eqref{e:empirical-measures-new}) and 
$\theta K_{n,p}\capp(T)>0$,  we conclude $v=0$ a.e. on $U$, a contradiction.

With \eqref{e:coercivity Fj conv min}  at hand, the equi-coercivity of the family $(\mathcal G_j)_j$ follows easily from Young's inequality and Proposition~\ref{p:aperiodic-liminf}. The rest of the statement is an easy consequence of the fundamental theorem of $\Gamma$-convergence (cf. \cite[Theorem~7.8]{DalMaso-book}), 
the strict convexity of $\mathcal{G}$ and the Urysohn's property. 
\end{proof}
We record next the corresponding result when the obstacles are
scaled according to the critical radius of the local problem.  Set
\begin{equation}\label{e:local-critical-obstacle-scale}
 \bar\lambda_j:=\eps_j^{\frac{n}{n-p}},
 \qquad
 \bar T_j^{\ii}:=x_j^{\ii}+\bar\lambda_jT,
 \qquad
 \bar{\mathcal K}_j:=
 \bigcup_{\ii\in\mathbb Z^n}\bar T_j^{\ii},
\end{equation}
and define $\bar{\mathcal F}_j:L^p(U)\to[0,+\infty]$ by
\begin{equation}\label{e:local-scale-functionals}
 \bar{\mathcal F}_j(u):=
 \begin{cases}
  (1-s_j)[u]_{s_j,p;U}^p,
  &u\in W^{s_j,p}(U),\quad
   u=0\quad\text{a.e. on }
   \bar{\mathcal K}_j\cap U,\\
  +\infty,&\text{otherwise}.
 \end{cases}
\end{equation}

\begin{theorem}[The three regimes at the local critical scale]
\label{t:aperiodic-local-critical-scale}
Assume \eqref{e:Delone-scaling-new}--
\eqref{e:empirical-measures-new}, and suppose that
\begin{equation}\label{e:rate-regimes}
 (1-s_j)|\log \eps_j|\longrightarrow\ell
 \qquad\text{for some }\ell\in[0,+\infty].
\end{equation}
Then $(\bar{\mathcal F}_j)_j$ $\Gamma$-converges in $L^p(U)$ to
\begin{equation}\label{e:local-scale-limit-functional}
 \bar{\mathcal F}^{(\ell)}(u):=
 \begin{cases}
 K_{n,p}\displaystyle\int_U|\nabla u|^p\,dx
 +\theta\kappa_{\ell}K_{n,p}\capp(T)
  \displaystyle\int_U|u|^p\beta\,dx,
 &u\in W^{1,p}(U),\\
 +\infty,&\text{otherwise},
 \end{cases}
\end{equation}
where
\begin{equation}\label{e:regime-coefficient}
 \kappa_{\ell}:=
 \begin{cases}
  \displaystyle\exp\left(-\frac{np}{n-p}\ell\right),
       &\ell<+\infty,\\
  0,&\ell=+\infty.
 \end{cases}
\end{equation}
For $\ell<+\infty$ the family
$(\bar{\mathcal F}_j)_j$ is equicoercive in $L^p(U)$ after discarding finitely many indices. 
\end{theorem}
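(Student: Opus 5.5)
The proof will repeat the argument of Theorem~\ref{t:aperiodic-BBM} almost verbatim, with $\lambda_j$ replaced by $\bar\lambda_j$. The only change is the scaling factor attached to each obstacle. For every $j$ we have
\[
 \bar\lambda_j^{n-s_jp}=\eps_j^n\,\eps_j^{\frac{np(1-s_j)}{n-p}}
 =\eps_j^n\exp\Bigl(-\tfrac{np}{n-p}\ell_j\Bigr),
 \qquad \ell_j:=(1-s_j)|\log\eps_j|,
\]
and by \eqref{e:rate-regimes} this factor equals $\eps_j^n(\kappa_\ell+o(1))$. The scale separation $\bar\lambda_j/\eps_j=\eps_j^{p/(n-p)}\to0$ continues to hold, and it is even uniform in $s_j$. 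Consequently the inclusion \eqref{e:obstacle-inside-filling-new} holds for large $j$, and $L_j:=\rho_j/\bar\lambda_j\to+\infty$.

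For the liminf inequality, take $u_j\to u$ with bounded energy and apply Lemma~\ref{l:uniform-joining} with fixed $m,N$. Estimates \eqref{e:aperiodic-reduction-new} and \eqref{e:aperiodic-328-new} involve no obstacle scale, so they are unchanged and give the full bulk term $K_{n,p}\int_U|\nabla u|^p$. The analogue of \eqref{e:single-cell-capacity-new} gives
\[
 \defectj(w_j,P_j^{\ii}\times P_j^{\ii})\ge(1-s_j)\bar\lambda_j^{n-s_jp}|z_j^{\ii}|^p\operatorname{cap}_{s_j,p}(T,B_{mL_j};L_j).
\]
Summing over $\ii$ and using \eqref{e:Psi-new}, \eqref{e:weighted-zeta-convergence-new}, Lemma~\ref{l:interior-capacity-BBM} and the factor above, we obtain $\liminf_j\mathcal S_j\ge\theta\kappa_\ell K_{n,p}\capp(T)\int_U|u|^p\beta$. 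When $\ell=+\infty$ this bound is trivially true, since $\mathcal S_j\ge0$. Letting $N\to\infty$ then concludes the liminf inequality.

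For the limsup inequality, take $u\in W^{1,\infty}(U)$ and build the same $\widehat u_j$ as in \eqref{e:interior-recovery-new}, now with correctors rescaled by $\bar\lambda_j$. Then \eqref{e:limsup-capacitary-sum-new} becomes
\[
 \widehat{\mathcal S}_j\le a_j\,\eps_j^{\frac{np(1-s_j)}{n-p}}\int_U|\zeta_j|^p\Psi_j\,dx\to\theta\kappa_\ell K_{n,p}\capp(T)\int_U|u|^p\beta\,dx.
\]
The $L^p$ estimate \eqref{e:corrector-Lp-small-new} only improves, because it acquires the factor $\exp(-\tfrac{np}{n-p}\ell_j)\le1$. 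The boundary correction works as before: the factor $(1-s_j)[\theta_j]^p_{s_j,p}$ is bounded by $\bar\lambda_j^{n-s_jp}\#\mathscr I_j(U)(1-s_j)[\eta]^p_{s_j,p}\le\eps_j^n\#\mathscr I_j(U)\,c\to0$, and the support of $\theta_j$ still has vanishing measure. When $\ell=+\infty$ the capacitary cost vanishes, and the same construction gives the bound $K_{n,p}\int_U|\nabla u|^p$. The general case then follows by density, with the same diagonal argument.

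Equicoercivity for $\ell<\infty$ follows the argument in the proof of Proposition~\ref{p:aperiodic-liminf}. Suppose $\|u_j\|_{L^p}\to\infty$ and normalize to $\widehat u_j$. Compactness from \cite[Theorem~1.2]{PoncePoincare} and the liminf inequality just proved give $\theta\kappa_\ell K_{n,p}\capp(T)\int_U|\widehat u|^p\beta\le0$. Since $\kappa_\ell>0$, this forces $\widehat u=0$, a contradiction; compactness and membership in $W^{1,p}$ then follow as before. For $\ell=+\infty$ the argument fails because $\kappa_\ell=0$: constants are not controlled, which explains the restriction in the statement.

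The main point to verify is that no step hidden in the lemmas used a relation tying $\lambda_j$ to $s_j$ beyond $\lambda_j/\eps_j\to0$. In particular, Lemma~\ref{l:interior-capacity-BBM} and Corollary~\ref{c:capacity-convergence} only require $L_j\to\infty$, and $L_j\to\infty$ still holds here. The only rate-dependent quantity is therefore the explicit factor computed at the start.
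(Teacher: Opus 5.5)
Your proposal is correct and follows the paper's route. The paper omits the proof, saying it follows from that of Theorem~\ref{t:aperiodic-BBM} by replacing $\lambda_j$ with $\bar\lambda_j$ and using $\eps_j^{-n}\bar\lambda_j^{n-s_jp}=\eps_j^{np(1-s_j)/(n-p)}\to\kappa_\ell$, which is the factor you carry through the capacitary liminf and limsup; your extra checks ($\bar\lambda_j/\eps_j\to0$, the bound $\bar\lambda_j^{n-s_jp}\le\eps_j^n$ in the $L^p$ and boundary-correction estimates, and $\kappa_\ell>0$ for equicoercivity when $\ell<+\infty$) fill in details the paper leaves implicit.
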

We stress again that the capacitary term is unchanged if $\ell=0$, is modulated by
$\exp(-np\ell/(n-p))$ if $\ell\in(0,+\infty)$, and disappears if $\ell=+\infty$.  

The proof is omitted, since it follows from that of
Theorem~\ref{t:aperiodic-BBM} by replacing $\lambda_j$ with
$\bar\lambda_j$ and observing that by the very definition in 
\eqref{e:regime-coefficient}
\begin{equation}\label{e:local-scale-capacity-factor}
 \eps_j^{-n}\bar\lambda_j^{n-s_jp}
 =\eps_j^{\frac{np(1-s_j)}{n-p}}
 \longrightarrow\kappa_{\ell}.
\end{equation}

Few remarsk are in order.
\begin{remark}[The case of thin obstacles]\label{r:capacity-constraint}
For open obstacles all constraints in this paper are imposed almost everywhere with respect to Lebesgue measure.  
The asymptotic result in Theorem~\ref{t:aperiodic-BBM} also extends to compact sets 
$T\Subset B_1$, possibly of zero Lebesgue measure, provided that
\begin{equation}\label{e:thin-capacity-assumption}
 (1-s_j)\operatorname{cap}_{s_j,p}(T)
 \longrightarrow K_{n,p}\capp(T),
\end{equation}
in view of the observations in Remark~\ref{r:capacity-thin-obstacles} and the proof of Theorem~\ref{t:aperiodic-BBM}. Recall that for such sets the capacities are understood in the usual outer sense: 
\[
 \operatorname{cap}_{s,p}(T)
 :=\inf\{\operatorname{cap}_{s,p}(O):T\subset O,
                                  \ O\text{ bounded and open}\}.
\]
The obstacle condition must then be imposed on the quasicontinuous
representative $\widetilde u$, namely
\[
 \widetilde u=0\quad\operatorname{cap}_{s_j,p}\text{-q.e. on }
                         \mathcal K_j\cap U.
\]
Here q.e. (quasi-everywhere) means outside a set of zero capacity of the indicated
order, and $\widetilde u$ is the quasicontinuous representative of $u$ (cf. \cite[Section~2 and Appendix~A]{FocardiPalatucciZeppieri} for the relevant definitions and for a concise account of the theory of fractional capacities). 
Likewise, the admissibility condition in the capacitary
problems is $\widetilde\varphi\geq1$ q.e. on $T$,
with respect to $\operatorname{cap}_{s,p}$ or $\capp$ as appropriate.
We stress that on open sets these conditions agree with the almost everywhere
formulation used in the text.
Note that for compact thin $T$, smooth compactly supported capacitary competitors replace the almost everywhere
competitors used for open sets.  

The compactness conclusion in Proposition~\ref{p:aperiodic-liminf} requires a positive limiting capacitary coefficient. In particular, one must assume $\capp(T)>0$.  Instead, the $\Gamma$-convergence statements remain valid even 
when this capacity is zero.

To see that the homogenization proofs still apply, let $L_j\to+\infty$.  The lower bound
$C_{s_j,p}(T,B_{L_j})\geq\operatorname{cap}_{s_j,p}(T)$ and
\eqref{e:thin-capacity-assumption} give the required capacitary
liminf.  For the upper bound, choose smooth compactly supported
functions that are at least one in a neighbourhood of $T$ and
whose local energies approach $\capp(T)$.  Each such function is
admissible in $B_{L_j}$ for large $j$, and the BBM formula gives
\[
 (1-s_j)C_{s_j,p}(T,B_{L_j})\longrightarrow K_{n,p}\capp(T).
\]
The interior comparison
\eqref{e:interior-global-comparison-new} also holds with the q.e.
constraint.  These are the capacity estimates used in the proofs of Theorem~\ref{t:aperiodic-BBM}. 

As for open sets, the joining modification in
Lemma~\ref{l:uniform-joining} preserves the constraint on the
selected obstacles.  Filling instead replaces the function by
the annular constant inside each inner ball and removes that
constraint, thereby separating the bulk energy from the
capacitary cost.  In the upper bound the capacitary correctors
impose the constraint again.

The same argument applies to Theorem~\ref{t:aperiodic-local-critical-scale}; at the local critical scale, compactness also requires $\ell<+\infty$.
\end{remark}

\begin{remark}[Translation-invariant homogeneous kernels]
\label{r:homogeneous-kernels}
Theorems~\ref{t:aperiodic-BBM} and~\ref{t:aperiodic-local-critical-scale} extend 
to energies with translation-invariant kernels, homogeneous of
degree $-(n+s_jp)$ and uniformly comparable to the Gagliardo kernel.
More precisely, let
\begin{equation}\label{e:homogeneous-kernels}
 \mathscr K_j(z):=
 \frac{a_j(z/|z|)}{|z|^{n+s_jp}},\qquad z\in\Rn\setminus\{0\},
 \qquad 0<c\leq a_j\leq C<+\infty,
\end{equation}
where $a_j\in L^\infty(\mathbb S^{n-1})$ are even and the bounds
are independent of $j$.  Note that since
replacing a kernel by its even part leaves the energy unchanged, the 
symmetry requirement is no restriction.
Assume in addition that
\begin{equation}\label{e:angular-kernel-limit}
 a_j\stackrel{*}{\rightharpoonup}a
       \quad\text{in }L^\infty(\mathbb S^{n-1}).
\end{equation}
This includes, in particular, a fixed angular profile $a_j=a$.
Define the local density and its homogeneous capacity by
\begin{align}
 \Phi_a(\xi)&:=\frac1p\int_{\mathbb S^{n-1}}
        a(\sigma)|\xi\cdot\sigma|^p\,d\mathcal H^{n-1}(\sigma),
        \qquad \xi\in\Rn,                                      \label{e:anisotropic-density}\\
 \operatorname{cap}_{\Phi_a}(T)&:=\inf\left\{
   \int_{\Rn}\Phi_a(\nabla v)\,dx:
   v\in\Dhom^{1,p}(\Rn),\ v\geq1\text{ a.e. on }T\right\}.
                                                               \label{e:anisotropic-capacity}
\end{align}
If the energies in \eqref{e:aperiodic-functionals-new} are replaced by
\[
 (1-s_j)\int_{U\times U}|u(x)-u(y)|^p
                         \mathscr K_j(x-y)\,dx\,dy,
\]
with the same obstacle constraints, their $\Gamma$-limit is finite on $W^{1,p}(U)$ with value
\begin{align*}
 &\int_U\Phi_a(\nabla u)\,dx
  +\theta\operatorname{cap}_{\Phi_a}(T)
                       \int_U|u|^p\beta\,dx,
\end{align*}
and is $+\infty$ otherwise on $L^p(U)$. At the local critical scale $\bar\lambda_j$, assuming \eqref{e:rate-regimes}, the limit on $W^{1,p}(U)$ is instead
\[
 \int_U\Phi_a(\nabla u)\,dx
 +\theta\kappa_\ell\operatorname{cap}_{\Phi_a}(T)
       \int_U|u|^p\beta\,dx.
\]
For $a=1$ we recover $\Phi_a(\xi)=K_{n,p}|\xi|^p$ and
$\operatorname{cap}_{\Phi_a}(T)=K_{n,p}\capp(T)$.

Indeed, uniform comparison in \eqref{e:homogeneous-kernels}
preserves the Sobolev and Poincar\'e estimates and the joining and
filling estimates, with constants depending also on $c$ and $C$.
The BBM limit in this case is the anisotropic density $\Phi_a$ (cf.
\cite[Theorems~1.1--1.2]{GennaioliStefani}).  Applying the argument of
Proposition~\ref{p:homogeneous-Gamma} and
Corollary~\ref{c:capacity-convergence} to these energies gives the
corresponding homogeneous and relative capacity limits.
Homogeneity yields the same factor $\lambda_j^{n-s_jp}$ under
rescaling, so the critical obstacle scales and the capacitary
weights are unchanged.  These are the ingredients needed in the
two homogenization proofs.

Under the homogeneity assumption, condition~\eqref{e:angular-kernel-limit} identifies a single limit. If this condition is omitted, the uniform bounds on $a_j$ give the result along subsequences with weakly-$*$ convergent angular profiles, whose limits may differ.
\end{remark}

The same asymptotics also hold imposing homogeneous Dirichlet boundary conditions. We introduce standard notation: for $s\in(0,1)$, let $W^{s,p}_0(U)$ denote the norm closure of
$C^\infty_c(U)$ in $W^{s,p}(U)$. 
\begin{corollary}[On homogeneous Dirichlet boundary conditions]
\label{c:homogeneous-Dirichlet}
Let $\mathcal F^0_j$ be defined as $\mathcal F_j$ in
\eqref{e:aperiodic-functionals-new}, with $W^{s_j,p}(U)$ there replaced
by $W^{s_j,p}_0(U)$.  Then the analogous conclusion of
Theorem~\ref{t:aperiodic-BBM} holds with $\Gamma(L^p(U))$-limit
\[
 \mathcal F^0(u):=
 \begin{cases}
  \mathcal F(u),&u\in W^{1,p}_0(U),\\
  +\infty,&\text{otherwise in }L^p(U).
 \end{cases}
\]
\end{corollary}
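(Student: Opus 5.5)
The plan is to repeat the proof of Theorem~\ref{t:aperiodic-BBM} (Propositions~\ref{p:aperiodic-liminf} and~\ref{p:aperiodic-limsup}) inside the class $W^{s_j,p}_0(U)$. Two facts have to be added: the Dirichlet constraint survives in the limit, and the recovery sequences can be taken with the constraint built in.

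\emph{Liminf inequality and compactness.} We have $\mathcal F^0_j\geq\mathcal F_j$. Hence Proposition~\ref{p:aperiodic-liminf} already gives compactness in $L^p(U)$, gives $u\in W^{1,p}(U)$, and gives $\liminf_j\mathcal F^0_j(u_j)\geq\mathcal F(u)$. It remains to show $u\in W^{1,p}_0(U)$. Let $\tilde u_j$ be the extension of $u_j$ by zero outside $U$. Since $u_j\in W^{s_j,p}_0(U)$ and $U$ is Lipschitz, the energy of $\tilde u_j$ on $\Rn\times\Rn$ equals $(1-s_j)[u_j]^p_{s_j,p;U}$ plus twice the cross term
\[
(1-s_j)\int_U|u_j(x)|^p\int_{\Rn\setminus U}\frac{dy}{|x-y|^{n+s_jp}}\,dx .
\]
\begin{enumerate}
\item If the normalized fractional Hardy inequality holds uniformly as $s\uparrow1$, the cross term is bounded by a constant times the interior energy. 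Then Proposition~\ref{p:homogeneous-Gamma} applies to $\tilde u_j$ and gives $\tilde u\in\Dhom^{1,p}(\Rn)$ with $\tilde u=0$ outside $U$.
\item If that inequality is not available, I would enlarge the domain instead: work in $B_R\supset\overline U$, where $\tilde u_j$ vanishes on $B_R\setminus U$, apply Ponce's local liminf and compactness there, and conclude $\tilde u=0$ on $B_R\setminus U$.
\end{enumerate}
In either case $u\in W^{1,p}_0(U)$, because $U$ is Lipschitz.

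I expect the main obstacle to be justifying the uniform control of the zero extension of $W^{s_j,p}_0(U)$ functions. For $s_jp\neq1$, and near $s=1$, the standard uniform Hardy inequality of Bourgain--Brezis--Mironescu type in Lipschitz domains is what I would try first. If that fails, I would fall back on the alternative of using the convergence of minimum problems with Dirichlet data as in \cite{KubinSaraccoStefani}.

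\emph{Limsup inequality.} By density of $C^\infty_c(U)$ in $W^{1,p}_0(U)$, and because $\mathcal F$ is continuous in the strong $W^{1,p}$ topology ($\beta\in L^\infty$), it suffices to treat $u\in C^\infty_c(U)$ and then argue diagonally.
\begin{enumerate}
\item Run the construction of Proposition~\ref{p:aperiodic-limsup} for such $u$. Joining uses averages of $u$ on annuli, and the cut-offs stay inside the cells. So cells far from $\operatorname{supp}u$ have $z_j^{\ii}=0$, and there $\widehat u_j$ vanishes identically.
\item Consequently $\widehat u_j$ is supported in a fixed compact set $K\Subset U$ for large $j$.
\item The boundary correction is then unnecessary, since $u=0$ near $\partial U$. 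Alternatively one can keep it, since it only multiplies by $1-\theta_j$.
\item Each $\widehat u_j$ is compactly supported in $U$ and lies in $W^{s_j,p}(\Rn)$, since the capacitary correctors are compactly supported. Mollification at a scale smaller than $\operatorname{dist}(K,\partial U)$ then shows $\widehat u_j\in W^{s_j,p}_0(U)$.
\end{enumerate}
The energy estimates of Proposition~\ref{p:aperiodic-limsup} carry over verbatim and give $\limsup_j\mathcal F^0_j(u_j)\leq\mathcal F^0(u)$.
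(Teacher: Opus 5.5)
Your limsup part matches the paper's argument: for $u\in C^\infty_c(U)$ all joining, filling and corrector modifications stay in a fixed compact subset of $U$, so the boundary correction is superfluous. Your mollification remark makes membership in $W^{s_j,p}_0(U)$ more explicit than the paper does.

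The liminf part has a genuine gap. The crux is an $s$-uniform boundary Hardy inequality, and you only assume it. For Lipschitz $U$, the exterior cone condition gives, for $x\in U$ near $\partial U$,
\[
 c\,d_U(x)^{-sp}\le\int_{\Rn\setminus U}\frac{dy}{|x-y|^{n+sp}}\le\frac{n\omega_n}{sp}\,d_U(x)^{-sp},
 \qquad d_U:=\operatorname{dist}(\cdot,\partial U).
\]
So bounding your cross term by the interior energy amounts to
\[
 \int_U\frac{|v|^p}{d_U^{sp}}\,dx\le C\,[v]^p_{s,p;U}
 \qquad\text{for } v\in W^{s,p}_0(U),\ s\in[s_0,1),
\]
with $C$ independent of $s$. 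Fixed-$s$ Hardy inequalities do not give this, because their constants are not controlled as $s\uparrow1$.

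Your fallback (2) is circular. Ponce's compactness and liminf on $B_R$ require $\sup_j(1-s_j)[\tilde u_j]^p_{s_j,p;B_R}<+\infty$. That seminorm contains the same cross term over $B_R\setminus U$, whose singularity sits at $\partial U$. The same control is also needed before results on full-space Dirichlet energies can be transferred to the regional energy on $W^{s_j,p}_0(U)$.

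The paper fills this gap with the sharper bound $\int_U|v|^pd_U^{-sp}\,dx\le c(1-s)[v]^p_{s,p;U}$, obtained in three steps:
\begin{enumerate}
\item it starts from the Loss--Sloane inequality $[v]^p_{s,p;U}\ge D_{n,p,sp}\int_U|v|^pm_{sp}^{-sp}\,dx$;
\item it checks that $(1-s)D_{n,p,sp}$ has a positive limit as $s\uparrow1$;
\item it uses the uniform cone condition to show that $m_\alpha^{-\alpha}$ and $d_U^{-\alpha}$ are comparable uniformly in $\alpha\in[s_0p,p]$.
\end{enumerate}
Fatou's lemma then gives $\int_U|u|^pd_U^{-p}\,dx\le c\sup_j\mathcal F^0_j(u_j)$, hence zero trace.

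Once such an estimate is available, your route (1) is correct and in one respect more economical. The cross term carries the factor $1-s_j$, so an $s$-uniform constant $C$ already bounds the full-space energy of $\tilde u_j$ and lets you invoke Proposition~\ref{p:homogeneous-Gamma}. The paper's Fatou step instead needs the sharp order $1-s$. As written, though, the key estimate is missing.
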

\begin{proof}
First note that since $p\in(1,n)$ and $s_j\to1^-$,
we have $s_jp\in(1,n)$ for all sufficiently large $j$. In this range,
$W^{s_j,p}_0(U)$ is the subspace of functions with zero trace on
$\partial U$.

The only difficulty is to check that the boundary condition is preserved in the limit.
To this aim fix $s_0\in(1/p,1)$, then the fractional Hardy inequality on a bounded 
Lipschitz set gives for every $ s\in[s_0,1)$ and $v\in W^{s,p}_0(U)$
\begin{equation}\label{e:uniform-boundary-Hardy}
 \int_U\frac{|v(x)|^p}{\mathrm{dist}^{sp}(x,\partial U)}\,dx
 \leq c(1-s)[v]_{s,p;U}^p\,,
\end{equation}
where $c=c(n,p,s_0,U)$ is independent of $s$.
Indeed, this follows from \cite[Theorem~1.2]{LossSloane}: we claim that
the Lipschitz regularity of $U$ allows the distance weight
there to be compared with $\mathrm{dist}^{-sp}(\cdot,\partial U)$, 
while the explicit Hardy constant in \cite[formula~(15)]{LossSloane} is of order
$(1-s)^{-1}$ as $s\uparrow1$.  

To prove the claim, we examine first the constant 
\[
D_{n,p,sp}=2\pi^{\frac{n-1}{2}}\frac{\Gamma(\frac{1+sp}2)}{\Gamma(\frac{n+sp}2)}
\int_0^1\frac{|1-r^{s-\frac1p}|^p}{(1-r)^{1+sp}}dr
\]
there. In the formula above $\Gamma$ denotes the Euler Gamma function.  
By an elementary calculation it is easy to check that 
\[
\lim_{s\to 1^-}(1-s)D_{n,p,sp}=
2\pi^{\frac{n-1}{2}}\frac{\Gamma(\frac{1+p}2)}{\Gamma(\frac{n+p}2)}
\frac1p\Big(1-\frac1p\Big)^p.
\]
Instead, to compare the weight in that theorem with the distance to
the boundary, write $d_U(x):=\operatorname{dist}(x,\partial U)$
and, for $\sigma\in\mathbb S^{n-1}$, set
\[
 d_{\sigma,U}(x):=\inf\{|t|:x+t\sigma\notin U\}.
\]
For $\alpha\in[s_0p,p]$, the weight in
\cite[formula~(14)]{LossSloane} is defined by
\[
 \begin{aligned}
 A_{n,\alpha}&:=\int_{\mathbb S^{n-1}}|\sigma_n|^\alpha\,
                       d\mathcal H^{n-1}(\sigma),\\
 m_\alpha^{-\alpha}(x)&:=\frac1{A_{n,\alpha}}
       \int_{\mathbb S^{n-1}}d_{\sigma,U}(x)^{-\alpha}\,
                       d\mathcal H^{n-1}(\sigma).
 \end{aligned}
\]
Since $d_{\sigma,U}(x)\geq d_U(x)$, we immediately obtain
$m_\alpha^{-\alpha}(x)\leq \mathcal H^{n-1}(\mathbb S^{n-1})
A_{n,\alpha}^{-1} d_U^{-\alpha}(x)$.  

The reverse inequality follows from the Lipschitz regularity of $U$.
In particular, we recall that for every $z\in\partial U$ there are $\rho>0$ 
and a closed cone $K_z$, with vertex in $z$ 
and opening independent of $z$, such that $B_{\rho}(z)\cap\partial U\cap K_z =\{z\}$, 
and $K_z\subseteq B_{\rho}(z)\cap U^c$.

Now fix $x\in U$, and choose $z\in\partial U$
with $|x-z|=d_U(x)$.  There are $\delta>0$ and $\eta\in(0,1)$,
depending only on $U$, such that, if $d_U(x)<\delta$, there is $q\in\R^n$ 
with $|q-z|=2d_U(x)$ and $B_{\eta d_U(x)}(q)\subset K_z$.
In particular, $B_{\eta d_U(x)}(q)\subset U^c$ and $|q-x|\leq3d_U(x)$.
Let $\Sigma_x\subset\mathbb S^{n-1}$ be the set of directions
whose rays from $x$ meet this ball.  Since its radius divided
by $|q-x|$ is at least $\eta/3$, we have
$\mathcal H^{n-1}(\Sigma_x)\geq a>0$, with $a$ independent
of $x$.  For every $\sigma\in\Sigma_x$,
$d_{\sigma,U}(x)\leq |q-x|+\eta d_U(x)\leq4d_U(x)$, and hence
\[
 m_\alpha(x)^{-\alpha}
 \geq\frac{a\,4^{-\alpha}}{A_{n,\alpha}}d_U(x)^{-\alpha}
 \geq c\,d_U(x)^{-\alpha}.
\]
If $d_U(x)\geq d_*$, the same conclusion follows from
$d_{\sigma,U}(x)\leq\operatorname{diam}U$ for every $\sigma$.
Thus the two weights are comparable on all of $U$, uniformly
in $\alpha\in[s_0p,p]$, since $A_{n,\alpha}$ is continuous
and strictly positive on this interval.
Taking $\alpha=sp$,  \cite[Theorem~1.2]{LossSloane} gives,
for $v\in C^\infty_c(U)$,
\[
 \int_U\frac{|v|^p}{\mathrm{dist}^{sp}(x,\partial U)}\,dx
 \leq C D_{n,p,sp}^{-1}[v]_{s,p;U}^p
 \leq C(1-s)[v]_{s,p;U}^p.
\]
The latter estimate extends from $C^\infty_c(U)$ to $W^{s,p}_0(U)$
by density. 

Thus, if $\sup_j\mathcal F^0_j(u_j)<+\infty$,
Proposition~\ref{p:aperiodic-liminf} gives, up to a subsequence,
strong convergence in $L^p(U)$ to some $u\in W^{1,p}(U)$.
By \eqref{e:uniform-boundary-Hardy} and Fatou's lemma,
$u/\mathrm{dist}(\cdot,\partial U)\in L^p(U)$, 
which implies that $u$ has zero trace.
Thus $u\in W^{1,p}_0(U)$, and the liminf inequality follows
from Proposition~\ref{p:aperiodic-liminf}.

For the limsup inequality, by density and a diagonal argument it suffices to consider
$u\in C^\infty_c(U)$.  We show that the recovery sequence constructed in
Proposition~\ref{p:aperiodic-limsup} belongs to $W^{s_j,p}_0(U)$.
Indeed, the joining and filling constructions in
Lemma~\ref{l:uniform-joining} modify $u$ only in the balls
$B_j^{\ii,h_j}$, whose radii are at most $\eps_j$.  If such a ball
is disjoint from $\operatorname{supp}u$, both $u$ and the corresponding
annular average vanish there.  Therefore, for all large $j$,
the resulting functions have support in a fixed compact subset
of $U$.  The capacitary corrections in
\eqref{e:interior-recovery-new} preserve this property, since they
are multiplied by the corresponding annular averages, which
vanish near $\partial U$.  In this setting the boundary correction is actually
unnecessary.
\end{proof}
\begin{remark}
We stress that the same argument applies to the functionals $\bar{\mathcal F}_j^0$
defined as $\bar{\mathcal F}_j$ in \eqref{e:local-scale-functionals} on 
$W^{s_j,p}_0(U)$ and $+\infty$ otherwise. The $\Gamma$-limit 
coincides with the functional  $\bar{\mathcal F}$ in Theorem~\ref{t:aperiodic-local-critical-scale}
restricted to $W^{1,p}_0(U)$ and $+\infty$ elsewhere.

Note that in this Dirichlet setting, equicoercivity also holds when
$\ell=+\infty$.  Indeed, \eqref{e:uniform-boundary-Hardy} and
the boundedness of $U$ give a uniform $L^p(U)$ bound from the
energy bound, so the BBM compactness theorem applies in every
regime.
\end{remark}

\begin{remark}
The same asymptotics hold for the Gagliardo seminorm on the full space.
More precisely, let
\[
\hat{\mathcal F}_j(u):=(1-s_j) \int_{\R^n\times\R^n}\frac{|u(x)-u(y)|^p}{|x-y|^{n+s_jp}}\,dx\,dy
\]
if $u\in W^{s_j,p}_0(U)$, $u=0$ a.e. on $\mathcal{K}_j\cap U$, and 
$+\infty$ otherwise on $L^p(U)$.

Noting that 
\[
\hat{\mathcal F}_j(u)={\mathcal F}_j^0(u)+
2(1-s_j)\int_{U}\int_{U^c}\frac{|u(x)|^p}{|x-y|^{n+s_jp}}dydx
\]
Corollary~\ref{c:homogeneous-Dirichlet} easily implies that
$(\hat{\mathcal F}_j)_j$ $\Gamma(L^p(U))$-converges to $\mathcal F^0$.
Indeed, by splitting $U^c$ into $U^c\cap B_{2R}$ and the complementary set,
where $U\subseteq B_R$, it suffices to note that the uniform Hardy inequality in 
\eqref{e:uniform-boundary-Hardy} yields
\begin{align*}
&\int_{U}\int_{U^c}\frac{|u(x)|^p}{|x-y|^{n+s_jp}}dydx\\&\leq
\frac{2^{n}n\mathcal L^n(B_1)}{s_jpR^{s_jp}}\int_U|u(x)|^pdx
+(2R)^n\mathcal L^n(B_1)\int_{U}\frac{|u(x)|^p}{\mathrm{dist}^{n+s_jp}(x,\partial U)}dx\\
&\leq\frac{C}{s_jpR^{s_jp}}\int_U|u(x)|^pdx
+CR^n(1-s_j){\mathcal F}_j^0(u) 
\end{align*}
for some dimensional constant $C>0$.
\end{remark}

\appendix
\section{Elementary results}
\label{s:singular-kernel-bounds}

We record the elementary estimates used in the proof of the uniform
joining lemma.  They are the explicit-constant version of
\cite[Lemma~A.1]{FocardiAperiodic}.

\begin{lemma}\label{l:singular-kernel-bounds}
Let $\nu>0$, $z\in\Rn$, and let $O\subset\Rn$ be measurable.
Then the following statements hold:
\begin{itemize}
\item[(i)] if $\nu\in(0,n)$, then
 \begin{equation}\label{e:singular-kernel-near}
  \int_O\frac{dx}{|x-z|^\nu}
  \leq \frac{n}{n-\nu}\omega_n^{\nu/n}
  \bigl(\Ln(O)\bigr)^{1-\nu/n};
 \end{equation}
\item[(ii)] if $\nu\in(n,+\infty)$ and
$d:=\operatorname{dist}(z,O)>0$, then
 \begin{equation}\label{e:singular-kernel-far}
  \int_O\frac{dx}{|x-z|^\nu}
  \leq\frac{n\omega_n}{\nu-n}d^{n-\nu}.
 \end{equation}
\end{itemize}
\end{lemma}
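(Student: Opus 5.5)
Both estimates reduce to integrating over the sphere in polar coordinates about $z$. The only nontrivial point is in (i): replacing $O$ by a ball of the same measure.

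For (i), I would use a rearrangement (bathtub) argument. Let $r_O$ be defined by $\omega_n r_O^n=\Ln(O)$, so that $B_{r_O}(z)$ has the same measure as $O$. If $\Ln(O)=+\infty$ the inequality is trivial, so assume it is finite. The function $x\mapsto|x-z|^{-\nu}$ is radially decreasing about $z$. Hence, among all sets of measure $\Ln(O)$, its integral is maximized by the ball centred at $z$. Concretely, I write
\[
O=(O\cap B_{r_O}(z))\cup(O\setminus B_{r_O}(z)).
\]
The piece $O\setminus B_{r_O}(z)$ has the same measure as $B_{r_O}(z)\setminus O$. On the first of these two sets the integrand is at most $r_O^{-\nu}$, and on the second it is at least $r_O^{-\nu}$. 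Therefore
\[
\int_O|x-z|^{-\nu}\,dx\le\int_{B_{r_O}(z)}|x-z|^{-\nu}\,dx .
\]
Polar coordinates then give
\[
\int_{B_{r_O}(z)}|x-z|^{-\nu}\,dx=n\omega_n\int_0^{r_O}t^{n-1-\nu}\,dt=\frac{n\omega_n}{n-\nu}\,r_O^{\,n-\nu},
\]
where the integral converges because $\nu<n$. Substituting $r_O=(\Ln(O)/\omega_n)^{1/n}$ yields
\[
\frac{n}{n-\nu}\,\omega_n^{1-(n-\nu)/n}\,\Ln(O)^{1-\nu/n}=\frac{n}{n-\nu}\,\omega_n^{\nu/n}\,\Ln(O)^{1-\nu/n},
\]
which is exactly the constant in \eqref{e:singular-kernel-near}.

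For (ii), the assumption $d=\operatorname{dist}(z,O)>0$ gives the inclusion $O\subseteq\Rn\setminus B_d(z)$. Hence
\[
\int_O|x-z|^{-\nu}\,dx\le n\omega_n\int_d^{\infty}t^{n-1-\nu}\,dt=\frac{n\omega_n}{\nu-n}\,d^{\,n-\nu},
\]
and the integral converges since $\nu>n$.

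The only step needing care is the rearrangement comparison in (i). I would handle it by the measure-swapping argument above rather than by citing Hardy–Littlewood. The rest is explicit computation. The degenerate cases are immediate: if $\Ln(O)=0$ both sides vanish, and if $\Ln(O)=+\infty$ the right-hand side is infinite.
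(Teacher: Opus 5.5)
Your proof is correct. Part (ii) is the same as the paper's argument; for part (i) you reach the same sharp constant by a different mechanism.

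The paper does not compare $O$ with a ball by exchanging pieces. It writes
$\int_O|x-z|^{-\nu}\,dx=\nu\int_0^{+\infty}\Ln(O\cap B_r(z))\,r^{-1-\nu}\,dr$
using Cavalieri's (layer-cake) formula with the substitution $t=r^{-\nu}$. It then inserts the bound $\Ln(O\cap B_r(z))\leq\min\{\omega_nr^n,\Ln(O)\}$ and splits the radial integral at $\bar r=(\Ln(O)/\omega_n)^{1/n}$.

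Your bathtub exchange between $O\setminus B_{r_O}(z)$ and $B_{r_O}(z)\setminus O$ needs no layer-cake representation. It isolates the geometric fact, namely that the ball centred at $z$ with the same measure is extremal, and leaves only a polar-coordinate computation on that ball. The price is that the exchange uses $\Ln(O\setminus B_{r_O}(z))=\Ln(B_{r_O}(z)\setminus O)$, which requires $\Ln(O)<+\infty$. You handle this correctly by disposing of the degenerate cases first, and the paper makes the same reduction.

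The paper's route is a direct one-dimensional computation from a pointwise bound on the distribution function. Underneath, the two arguments are the same comparison: $\min\{\omega_nr^n,\Ln(O)\}$ is exactly $\Ln(B_{\bar r}(z)\cap B_r(z))$, the distribution function of the same ball you use.
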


\begin{proof}
We may assume $0<\Ln(O)<+\infty$ in item~(i), since the other
cases are immediate.  By Cavalieri's formula and the change of
variables $t=r^{-\nu}$,
\begin{equation*}
 \int_O|x-z|^{-\nu}\,dx
 =\nu\int_0^{+\infty}
 \frac{\Ln(O\cap B_r(z))}{r^{1+\nu}}\,dr.
\end{equation*}
Set $\bar r=(\Ln(O)/\omega_n)^{1/n}$.  Using
$\Ln(O\cap B_r(z))\leq\min\{\omega_nr^n,\Ln(O)\}$ and splitting
the last integral at $\bar r$, we obtain
\begin{align*}
 \int_O|x-z|^{-\nu}\,dx
 &\leq\nu\omega_n\int_0^{\bar r}r^{n-\nu-1}\,dr
 +\nu\Ln(O)\int_{\bar r}^{+\infty}r^{-\nu-1}\,dr\\
 &=\frac{n}{n-\nu}\omega_n^{\nu/n}
 \bigl(\Ln(O)\bigr)^{1-\nu/n}.
\end{align*}
This proves item~(i).  For item~(ii), $O\subset\Rn\setminus B_d(z)$
up to a negligible subset of the sphere $\partial B_d(z)$, and hence
\begin{equation*}
 \int_O|x-z|^{-\nu}\,dx
 \leq\int_{\Rn\setminus B_d(z)}|x-z|^{-\nu}\,dx
 =\frac{n\omega_n}{\nu-n}d^{n-\nu}. 
\end{equation*}
\end{proof}

We show next that the constant in the Poincar\'e--Wirtinger inequality is
uniform for the large class of closed convex sets $K$ satisfying $B_1\subset K\subset B_M$.
\begin{proposition}\label{p:PW Voronoi}
Let $p\in(1,+\infty)$, $s_0\in(0,1)$, and $M>1$.
There exists a constant $c=c(n,p,s_0,M)>0$ such that, for every closed
convex set $K\subset\Rn$ with $B_1\subset K\subset B_M$, every
$s\in[s_0,1)$, and every $v\in W^{s,p}(\operatorname{int}K)$,
\begin{align}\label{e:PW Voronoi}
 \|v-(v)_K\|_{L^p(K)}^p\leq c(1-s)
 \int_{K\times K}\frac{|v(x)-v(y)|^p}{|x-y|^{n+sp}}\,dx\,dy.
\end{align}
\end{proposition}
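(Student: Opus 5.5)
The plan is a compactness argument by contradiction, combined with Ponce's uniform-in-$s$ Poincar\'e inequality and the $\Gamma$-liminf of Proposition~\ref{p:homogeneous-Gamma}.

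\emph{Step 1: fixed $K$.} For a single closed convex $K$ with $B_1\subset K\subset B_M$, the interior is a bounded, connected Lipschitz open set, and $\partial K$ is Lebesgue negligible. Hence \eqref{e:uniform-PW-prelim} with $A=O=\operatorname{int}K$ already gives \eqref{e:PW Voronoi} with a constant $c(n,p,s_0,K)$. The whole difficulty is to make this constant independent of $K$.

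\emph{Step 2: uniformity via bi-Lipschitz maps.} The key geometric fact is that each such $K$ is star-shaped with respect to $B_1$. Its gauge (Minkowski functional) $g_K$ is therefore $1$-Lipschitz and satisfies $|x|/M\leq g_K(x)\leq|x|$. The radial map
\[
\Phi_K(x):=\frac{g_K(x)}{|x|}\,x \quad (\Phi_K(0)=0)
\]
sends $K$ onto $\overline B_1$. I will check that $\Phi_K$ is bi-Lipschitz with constants depending only on $n$ and $M$: the radial factor $r_K(\sigma)=1/g_K(\sigma)\in[1,M]$ is Lipschitz on $\mathbb S^{n-1}$ with constant $\leq M^2$, by the standard cone/convexity argument. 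By the remark following \eqref{e:uniform-PW-scaled}, the Poincar\'e constant is uniform along families obtained from the fixed pair $(B_1,B_1)$ by maps with uniformly controlled bi-Lipschitz constants. Explicitly, for $v$ on $K$ set $\tilde v:=v\circ\Phi_K^{-1}$ on $B_1$. Then $|x-y|\simeq|\Phi_K^{-1}x-\Phi_K^{-1}y|$ with constant $L=L(n,M)$, so the kernels are comparable up to $L^{n+sp}\leq L^{n+p}$, and the Jacobians are bounded above and below. Also, $\|v-(v)_K\|_{L^p(K)}\leq 2\|v-a\|_{L^p(K)}$ for every constant $a$, so the change of mean is harmless. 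Applying \eqref{e:uniform-PW-prelim} on $B_1$ to $\tilde v$ and transferring back yields \eqref{e:PW Voronoi} with $c=c(n,p,s_0,M)$.

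\emph{Main obstacle.} The delicate step is the Lipschitz bound on the radial function $r_K$, i.e.\ the bi-Lipschitz control of $\Phi_K$ near the origin and across directions. I would derive it from the inclusion $B_1\subset K$: for each boundary point $x_0$, the convex hull of $\{x_0\}\cup B_1$ lies in $K$, which bounds the angle between the radial direction and the supporting hyperplane at $x_0$. This gives $|\nabla g_K|\leq1$, and with $g_K\geq|x|/M$ it gives the required derivative bounds.

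\emph{Fallback.} If that computation becomes messy, I would instead argue by contradiction. Suppose there are $K_k$, $s_k\in[s_0,1)$ and $v_k$ with zero mean, $\|v_k\|_{L^p(K_k)}=1$, and $(1-s_k)$-normalized energy tending to $0$. By Blaschke selection, $K_k\to K$ in Hausdorff distance, and $B_1\subset K\subset\overline B_M$. Each $K_k$ contains $(1-\eta)K$ for $k$ large. One would then need uniform compactness on $(1-\eta)K$, obtained from \cite[Theorem~1.2]{PoncePoincare} if $s_k\to1$ or from fractional Rellich otherwise. The limit is constant with zero mean, so the mass would have to escape into the thin shells $K_k\setminus(1-\eta)K$; that requires a uniform control on those shells, which is again the bi-Lipschitz issue. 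So I would pursue the direct map argument first.
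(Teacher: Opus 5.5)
Your Step 2 is essentially the paper's argument: the gauge $g_K$ is $1$-Lipschitz with $|x|/M\le g_K(x)\le|x|$, the radial map is bi-Lipschitz with constants depending only on $M$ (the paper's $F_K(t\omega)=t\rho_K(\omega)\omega$ is the inverse of your $\Phi_K$), and the conclusion follows from Jensen's change of mean and the uniform Poincar\'e--Wirtinger inequality on $B_1$. The only difference is that the paper gets the Lipschitz bound on $g_K$ directly from subadditivity, $g_K(x)\le g_K(y)+g_K(x-y)\le g_K(y)+|x-y|$, rather than from a supporting-hyperplane argument, so your fallback compactness route is not needed.
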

\begin{proof}
Let
\[
 g_K(x):=\inf\{t>0:x\in tK\}
\]
be the Minkowski functional of $K$.  Convexity gives subadditivity, i.e.
$g_K(x+y)\leq g_K(x)+g_K(y)$, whereas the two inclusions $B_1\subset K\subset B_M$ imply
\[
 \frac{|x|}{M}\leq g_K(x)\leq |x|.
\]
Applying subadditivity to $x=y+(x-y)$ and then interchanging
$x$ and $y$, we obtain
$|g_K(x)-g_K(y)|\leq|x-y|$.  Thus, on the unit sphere
$\mathbb S^{n-1}:=\{x\in\Rn:|x|=1\}$, the function
$\rho_K:=1/g_K$ satisfies
\[
 1\leq\rho_K\leq M,
 \qquad
 |\rho_K(\omega)-\rho_K(\eta)|\leq M^2|\omega-\eta|.
\]
Define $F_K(0)=0$ and
\[
 F_K(t\omega):=t\rho_K(\omega)\omega,
 \qquad 0<t<1,\quad \omega\in\mathbb S^{n-1}.
\]
This map is a bijection from $B_1$ onto $\operatorname{int}K$,
with inverse $F_K^{-1}(y)=g_K(y/|y|)y$ for $y\neq0$.
For nonzero $x,y$, the inequality
\[
 \min\{|x|,|y|\}
 \left|\frac{x}{|x|}-\frac{y}{|y|}\right|\leq2|x-y|
\]
gives
\[
 \operatorname{Lip}(F_K)\leq M+2M^2,
 \qquad \operatorname{Lip}(F_K^{-1})\leq3.
\]
Consequently both constants are bounded by a number $L=L(M)\geq1$,
independently of $K$.

Given $s\in[s_0,1)$ and $v\in W^{s,p}(\operatorname{int}K)$,
put $w=v\circ F_K$.  For every constant $b\in\R$, Jensen's
inequality gives
$\|v-(v)_K\|_{L^p(K)}\leq2\|v-b\|_{L^p(K)}$.
Choose $b=(w)_{B_1}$ and apply \eqref{e:uniform-PW-prelim} on
$B_1$.  Changing variables, and using the bounds on the Jacobians
of $F_K$ and $F_K^{-1}$, yields
\begin{align*}
 \|v-(v)_K\|_{L^p(K)}^p
 &\leq 2^p L^n\|w-(w)_{B_1}\|_{L^p(B_1)}^p\notag\\
 &\leq c(n,p,s_0)L^n(1-s)
 \int_{B_1\times B_1}
       \frac{|w(x)-w(y)|^p}{|x-y|^{n+sp}}\,dx\,dy\notag\\
 &\leq c(n,p,s_0)L^{4n+sp}(1-s)
 \int_{K\times K}
       \frac{|v(x)-v(y)|^p}{|x-y|^{n+sp}}\,dx\,dy.
\end{align*}
Since $sp\leq p$, the last constant depends only on $n,p,s_0,M$.
\end{proof}

\section*{Acknowledgments}
The author would like to thank Caterina Zeppieri for several discussions, comments 
and suggestions which improved the original version of the paper.

\section*{Use of artificial intelligence}
The mathematical contents and the proofs of the results contained
in this paper were conceived exclusively by the author.

OpenAI's ChatGPT was used in the preparation of this manuscript
for drafting, revising and simplifying parts of the proofs, and for 
comparing them with the source paper \cite{FocardiAperiodic}.
It also assisted with checking calculations and references,
reorganizing the exposition, and editing.  This work involved repeated mathematical
feedback and corrections from the author, who retains all the responsibility for the contents.

\end{document}